\newcommand{\rotxc}[1]{\begin{sideways}#1\end{sideways}}
\newcommand{\invert}[1]{\rotxc{\rotxc{#1}}}
\newcommand{\A}{\mathcal{A}}
\newcommand{\C}{\mathbb{C}}
\newcommand{\RR}{\mathbb{R}}
\newcommand{\R}{\mathbb{R}}
\newcommand{\ZZ}{\mathbb{Z}}
\newcommand{\B}{\mathcal{B}}
\newcommand{\I}{\mathcal{I}}
\newcommand{\bigmid}{\;\middle\vert\;}
\def\M{\mathcal{M}}
\def\O{\mathcal{O}}
\def\Le{\hbox{\invert{$\Gamma$}}}
\font\co=lcircle10
\def\jr{\rotatedown{\smash{\raise2pt\hbox{\co \rlap{\rlap{\char'005} \char'007}}
               \raise6pt\hbox{\rlap{\vrule height6.5pt}}
                \raise2pt\hbox{\rlap{\hskip4pt \vrule
          height0.4pt depth0pt
                width7.7pt}}}}}
\def\textcross{\ \smash{\lower4pt\hbox{\rlap{\hskip4.15pt\vrule height14pt}}
                \raise2.8pt\hbox{\rlap{\hskip-3pt \vrule height.4pt depth0pt
                                width14.7pt}}}\hskip12.7pt}
\def\textelbow{\ \hskip.1pt\smash{\raise2.75pt%
                \hbox{\co \hskip 4.15pt\rlap{\rlap{\char'004} \char'006}
                \lower6.8pt\rlap{\vrule height3.5pt}
                \raise3.6pt\rlap{\vrule height3.5pt}}
                \raise2.8pt\hbox{%
                  \rlap{\hskip-7.15pt \vrule height.4pt depth0pt
width3.5pt}%
                  \rlap{\hskip4.05pt \vrule height.4pt depth0pt
width3.5pt}}}
                \hskip8.7pt}
\DeclareMathOperator{\convex}{convex}
\newtheorem{theorem}{Theorem}[section]
\newtheorem{lemma}[theorem]{Lemma}
\newtheorem{proposition}[theorem]{Proposition}
\newtheorem{corollary}[theorem]{Corollary}
\newtheorem{conjecture}[theorem]{Conjecture}
\theoremstyle{definition}
\newtheorem{definition}[theorem]{Definition}
\newtheorem{example*}[theorem]{Example}
\theoremstyle{remark}
\newtheorem{remark}[theorem]{Remark}
\DeclareRobustCommand{\qedify}[1]{%
  \ifmmode \quad\hbox{#1}
  \else
    \leavevmode\unskip\penalty9999 \hbox{}\nobreak\hfill
    \quad\hbox{#1}%
  \fi
}
\newenvironment{example}{\begin{example*}\pushQED{\qedify{$\diamondsuit$}}}{\popQED\end{example*}}
\begin{document}

\begin{abstract}
We investigate the role that non-crossing partitions play in the study of positroids, a class of matroids introduced by Postnikov. 
We prove that every positroid can be constructed uniquely by choosing a non-crossing partition on the ground set, and then freely placing the structure of a connected positroid on each of the blocks of the partition. 
This structural result yields several combinatorial facts about positroids. We show that the face poset of a positroid polytope embeds in a poset of weighted non-crossing partitions. We enumerate connected positroids, and show how they arise naturally in free probability. Finally, we prove that the probability that a positroid on $[n]$ is connected equals $1/e^2$ asymptotically. 
\end{abstract}

\title{Positroids and non-crossing partitions}

\date{\today}
\thanks{
The first author was partially supported by the National Science Foundation CAREER Award DMS-0956178 and the SFSU-Colombia Combinatorics Initiative. 
The second author was supported by the EPSRC grant EP/I008071/1.  
The third author was partially supported by the National Science Foundation CAREER award
DMS-1049513.
}
\author{Federico Ardila}
\address{Mathematics Department, San Francisco State University, United States.}
\email{federico@sfsu.edu}

\author{Felipe Rinc\'on}
\address{Mathematics Institute, University of Warwick, United Kingdom.}
\email{e.f.rincon@warwick.ac.uk}

\author{Lauren Williams}
\address{Mathematics Department, University of California, Berkeley, United States.}
\email{williams@math.berkeley.edu}

\maketitle

\setcounter{tocdepth}{1}
\tableofcontents

\section{Introduction}
\label{sec:intro}

A \emph{positroid} is a matroid on an ordered set which can be represented
by the columns of a full rank $d \times n$ real matrix such that all its maximal 
minors are nonnegative.  Such matroids were first considered by 
Postnikov \cite{postnikov} in his study of the \emph{totally 
nonnegative part of the Grassmannian}.  In particular, 
Postnikov showed that positroids are in bijection with several
interesting classes of combinatorial objects, including
Grassmann necklaces, decorated permutations, $\Le$-diagrams, 
and equivalence classes of plabic graphs.  

Positroids have many nice matroidal properties.  They are closed
under restriction, contraction, and duality, as well as a cyclic 
shift of the ground set.  Positroid polytopes also have 
nice properties.  A general matroid polytope for a matroid
on the ground set $[n]$ can be described by 
using $2^n$ inequalities; in contrast, as we describe in Section \ref{sec:polytopes},
a positroid polytope for a rank $d$ positroid on $[n]$ can be described using $dn+n$ inequalities.

The main structural result of this paper shows the connection between positroids and non-crossing partitions. In Theorem \ref{th:pos-sum} we prove 
that the connected components of a positroid form a non-crossing partition of its ground set. 
Conversely, each positroid on $[n]$ can be uniquely constructed by choosing a non-crossing partition $(S_1,\dots,S_t)$ of  $[n]$, and then putting the structure of a connected positroid on each block $S_i$. The first statement was also discovered in \cite{OPS}, where it is stated without proof, and in \cite{Ford}.
We also give an alternative description of this non-crossing partition in terms of Kreweras complementation.

Our structural result 
allows us to enumerate connected positroids, as described in Theorem \ref{th:countconn}. Along the way, we show in Corollary \ref{cor:connSIF} that the connected positroids on $[n]$ are in bijection with the
stabilized-interval-free 
permutations on $[n]$; that is, the permutations $\pi$ such that 
$\pi(I) \neq I$ for all intervals $I \subsetneq [n]$.
We then show in Theorem \ref{th:ratio} that the proportion of positroids on $[n]$ which are connected is equal to 
${1}/{e^2}$ asymptotically.  This result is somewhat surprising in 
light of the conjecture \cite{MNWW} that ``most matroids are connected"; more specifically, that as $n$ goes to infinity, the ratio of connected matroids on $[n]$ to matroids on $[n]$ tends to $1$.

Our enumerative results on  positroids  also allow
us to make a connection to free probability. Concretely, we show that  if $Y$ is the random variable
$1+Exp(1)$, then the $n$th moment 
$m_n(Y)$ equals the number of positroids on $[n]$, 
and the $n$th free cumulant $k_n(Y)$ equals the number
of connected positroids on $[n]$.

We also obtain some results on the matroid polytope of a positroid. In Proposition \ref{r:inequalities} we state and prove an inequality description for positroid polytopes, which we learned from Alex Postnikov \cite{postnikovpersonal} and will appear in \cite{LP}.
More strongly, we show in Theorem \ref{thm:embed} that the face poset of a positroid polytope naturally embeds in a poset of weighted non-crossing partitions.

The structure of this paper is as follows.
In Section \ref{sec:prelim} 
we review the notion of a matroid, as well as the operations
of restriction, contraction, and duality.
In Section \ref{sec:positroids} we show that positroids are 
closed under these operations as well as a cyclic shift of the 
ground set. We also show that if $\{S_1,\dots,S_t\}$ is a 
non-crossing partition of $[n]$, and $M_i$ is a positroid on $S_i$,
then the direct sum of the $M_i$s is a positroid.
In Section \ref{sec:objects} we review Postnikov's
notion of Grassmann necklaces,
decorated permutations, $\Le$-diagrams, and plabic graphs, all of which
are combinatorial objects parameterizing positroids. We review some of the bijections between them.
In Section \ref{sec:polytopes} we turn our attention to positroid
polytopes, and provide a simple inequality description of them due to Postnikov.  
We also show that each face of a positroid polytope is a positroid polytope.
In Section \ref{sec:plabic} we explain how to read off the bases
and basis exchanges of a positroid from a corresponding plabic graph.
In Section \ref{sec:nc} we prove our main structural result on 
positroids, that the connected components of a positroid comprise
a non-crossing partition of the ground set. We also prove a converse to this result. The proofs of these results use plabic graphs as well as positroid polytopes. 
In Section \ref{sec:kreweras} we give an alternative description of the non-crossing partition of a positroid, relating the Kreweras complement of the partition to the positroid polytope.
In Section \ref{sec:embed} we define the poset of \emph{weighted non-crossing partitions}, and show that the face poset of a positroid polytope is embedded in it.
In Section \ref{sec:enumeration} we give our enumerative results
for positroids, and in Section \ref{sec:free}
we make the connection to free probability.

\section{Matroids}
\label{sec:prelim}

A matroid is a combinatorial object which unifies several notions of independence. Among the many equivalent ways of defining a matroid we will adopt the point of view of bases, which is one of the most convenient for the study of positroids and matroid polytopes. We refer the reader to \cite{Oxley} for a more in-depth introduction to matroid theory.

\begin{definition}
A \emph{matroid} $M$ is a pair $(E, \B)$ consisting of a finite set $E$ and a nonempty collection of subsets $\B=\B(M)$ of $E$, called the \emph{bases} of $M$, which satisfy the \emph{basis exchange axiom}: 
\begin{center}
 If $B_1, B_2 \in \B$ and $b_1 \in B_1 - B_2$, then there exists $b_2 \in B_2 - B_1$ such that $B_1 -  \{b_1\} \cup \{b_2\} \in \B$.
\end{center}
\end{definition}
The set $E$ is called the \emph{ground set} of $M$; we also say that $M$ is a matroid on $E$. 
A subset $F \subset E$ is called \emph{independent} if it is contained in some basis. All the maximal independent sets contained in a given set $A \subset E$ have the same size, which is called the \emph{rank} $r_M(A)=r(A)$ of $A$. In particular, all the bases of $M$ have the same size, which is 
called the rank $r(M)$ of $M$.

\begin{example}
Let $A$ be a $d \times n$ matrix of rank $d$ with entries in a field $K$, and denote its columns by ${\bf a_1}, {\bf a_2}, \dotsc, {\bf a_n} \in K^d$. The subsets $B \subset [n]$ for which the columns $\{{\bf a_i} \mid i \in B \}$ form a linear basis for $K^d$ are the bases of a matroid $M(A)$ on the set $[n]$. Matroids arising in this way are called \emph{representable}, and motivate much of the theory of matroids.
\end{example}

There are several natural operations on matroids.

\begin{definition}\label{def:sum}
Let $M$ be a matroid on $E$ and $N$ a matroid on $F$. The \emph{direct sum}
of matroids $M$ and $N$ is the matroid $M \oplus N$
whose underlying set is
the disjoint union of $E$ and $F$, and whose bases are the disjoint 
unions of a basis of $M$ with a basis of $N$.
\end{definition}

\begin{definition}
Given a matroid $M=(E,\B)$, the \emph{orthogonal} or \emph{dual matroid} $M^*=(E,\B^*)$ is the 
matroid on $E$ defined by 
$\B^* = \{E - B \mid B\in \B\}$.
\end{definition}

\begin{definition}
Given a matroid $M=(E,\B)$, and a subset $S$ of $E$, the \emph{restriction}
of $M$ to $S$, written $M|S$, is the matroid on the ground set $S$ whose independent sets 
are all independent sets of $M$ which are contained in $S$. Equivalently,
the set of bases of $M|S$ is
\[
\B(M|S) = \{B \cap S \ \mid \ B \in \B, \text{ and } |B \cap S| \text{ is maximal among all }B \in \B \}. 
\]
\end{definition}

The dual operation of restriction is contraction.

\begin{definition}
Given a matroid $M=(E,\B)$ and a subset $T$ of $E$, the \emph{contraction}
of $M$ by $T$, written $M/T$, is the matroid on the ground set $E - T$ whose bases
are the following:
\[
\B(M/T) = \{B - T \ \mid \ B \in \B, \text{ and } |B \cap T| \text{ is maximal 
among all }B \in \B \}. 
\]
\end{definition}

\begin{proposition}\cite[Chapter 3.1, Exercise 1]{Oxley}\label{prop:drc}
If $M$ is a matroid on $E$ and $S \subset E$, then 
\[
(M/S)^* = M^*|(E - S).
\]
\end{proposition}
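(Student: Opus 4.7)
The plan is to verify the identity by showing directly that the bases of $(M/S)^*$ and the bases of $M^*|(E-S)$ coincide as subsets of $E-S$, since a matroid is determined by its bases.

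First I would unpack the left-hand side. By the definitions of duality and contraction, the bases of $(M/S)^*$ are exactly the sets
\[
(E-S) - (B - S) \quad \text{where } B \in \B(M) \text{ and } |B \cap S| \text{ is maximal.}
\]
A quick set-theoretic check shows $(E-S) - (B-S) = (E-B) - S$, so the bases of $(M/S)^*$ are the sets $(E-B) - S$ as $B$ ranges over bases of $M$ maximizing $|B \cap S|$.

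Next I would unpack the right-hand side. The bases of $M^*$ are the complements $E-B$ for $B \in \B(M)$, and by the definition of restriction the bases of $M^*|(E-S)$ are the sets $(E-B) \cap (E-S) = (E-B) - S$ where $B$ ranges over bases of $M$ chosen so that $|(E-B) \cap (E-S)|$ is maximal. The sets appearing on both sides are thus of the same form $(E-B) - S$; what remains is to check that the two maximality conditions on $B$ single out the same family of bases.

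The main (and only) real step is then to observe that, writing $n = |E|$ and $r = r(M)$, we have
\[
|(E-B) - S| = (n - r) - |S| + |B \cap S|,
\]
so $|(E-B) - S|$ and $|B \cap S|$ differ by a constant independent of $B$. Hence maximizing one over $\B(M)$ is equivalent to maximizing the other, and the two collections of bases agree. This establishes $(M/S)^* = M^*|(E-S)$. I do not anticipate a genuine obstacle here; the statement is essentially a bookkeeping identity, and the only mild subtlety is the equivalence of the two maximality conditions, which is resolved by the complementary counting above.
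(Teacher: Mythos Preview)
Your proof is correct. The paper does not actually prove this proposition; it simply cites it as \cite[Chapter 3.1, Exercise 1]{Oxley} and moves on, so there is no argument to compare against. Your direct verification via bases---reducing both sides to sets of the form $(E-B)-S$ and then observing that the two maximality conditions on $B$ coincide because $|(E-B)\cap(E-S)| = (n-r) - |S| + |B\cap S|$---is exactly the intended elementary check.
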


\section{Positroids}
\label{sec:positroids}

In this paper we study a special class of representable matroids introduced by Postnikov in \cite{postnikov}. We begin by collecting several foundational results on positroids, most of which are known \cite{oh, postnikov}.

\begin{definition}\label{def:positroid}
Suppose $A$ is a $d \times n$ matrix of rank $d$ with real entries such that all its maximal minors are nonnegative. Such a matrix $A$ is called \emph{totally nonnegative}, 
and the representable matroid $M(A)$ associated to $A$ is called a \emph{positroid}.
\end{definition}

\begin{remark}
We will often identify the ground set of a positroid with the set $[n]$, but more 
generally, the ground set of a positroid may be any finite set $E=\{e_1,\dots,e_n\}$,
endowed with a specified total order 
$e_1<\dots<e_n$. Note that the fact that a given matroid is a positroid is strongly dependent
on the total order of its ground set; in particular, being a positroid is not invariant
under matroid isomorphism.
\end{remark}

If $A$ is as in Definition \ref{def:positroid} and $I \in \binom{[n]}{d}$ is a $d$-element subset
of $[n]$, then we let $\Delta_I(A)$ denote the $d \times d$ minor of $A$ 
indexed by the column set $I$.  These minors are called the
\emph{Pl\"ucker coordinates} of $A$.

In our study of positroids, we will repeatedly make use of the following notation. Given $k,\ell \in [n]$, we define the \emph{(cyclic) interval} $[k,\ell]$ to be the set
\[
[k,\ell] := 
\begin{cases}
 \{ k, k+1, \dotsc, \ell \} &\text{ if $k \leq \ell$},\\
 \{ k, k+1, \dotsc, n, 1, \dotsc, \ell \} &\text{ if $\ell < k$}.
\end{cases}
\]
We also refer to a cyclic interval as a \emph{cyclically consecutive} subset of $[n]$.
We will often put a total order on a cyclic interval: in the first case above
we use the total order $k < k+1 < \dots < \ell$, and in the second case,
we use the total order $k < k+1 < \dots < n < 1 < \dots < \ell$.

Positroids are closed under several key operations:

\begin{lemma}\label{lem:cyclic}
Let $M$ be a positroid on the ground set 
$E = \{1 < \dots < n\}$.  Then for any $1 \leq a \leq n$, 
$M$ is also a positroid
on the  ordered ground set $\{a < {a+1} < \dots < n < 1 < \dots < {a-1}\}$.
\end{lemma}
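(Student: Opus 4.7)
The plan is to reduce to a single cyclic shift by one position and then exhibit an explicit totally nonnegative matrix representing $M$ with respect to the shifted order. By iterating the one-step shift, it suffices to prove the case $a=2$: namely, if $M = M(A)$ for a $d \times n$ totally nonnegative matrix $A$ with columns $\mathbf{a}_1, \dots, \mathbf{a}_n$, then $M$ is a positroid on the order $\{2 < 3 < \dots < n < 1\}$.

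The key construction is to form the matrix
\[
A' := \bigl[\, \mathbf{a}_2 \mid \mathbf{a}_3 \mid \cdots \mid \mathbf{a}_n \mid (-1)^{d-1}\mathbf{a}_1 \,\bigr].
\]
Since $A'$ differs from a column permutation of $A$ only by rescaling one column by a nonzero scalar, the matroid it represents on $\{1,\dots,n\}$ (with this listing of columns) is exactly $M$ when we relabel positions by the cyclic shift. So the content of the lemma is that $A'$ is totally nonnegative. I would verify this by a direct case analysis on a $d$-subset $I \subset [n]$ indexing a maximal minor of $A'$:
\begin{itemize}
\item If $n \notin I$, the corresponding submatrix of $A'$ equals the submatrix of $A$ on the columns $\{i+1 : i \in I\}$, so the minor of $A'$ indexed by $I$ equals a maximal minor of $A$, which is nonnegative.
\item If $n \in I$, write $I = \{i_1 < \dots < i_{d-1} < n\}$. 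Then the submatrix of $A'$ on $I$ is $[\mathbf{a}_{i_1+1} \mid \cdots \mid \mathbf{a}_{i_{d-1}+1} \mid (-1)^{d-1}\mathbf{a}_1]$. Cycling the last column to the front contributes a sign $(-1)^{d-1}$, which cancels the scalar $(-1)^{d-1}$ in front of $\mathbf{a}_1$. Hence the minor of $A'$ indexed by $I$ equals the maximal minor of $A$ on the column set $\{1, i_1+1, \dots, i_{d-1}+1\}$, which is nonnegative.
\end{itemize}

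The main step is precisely the sign bookkeeping in the second case; once the factor $(-1)^{d-1}$ is placed correctly on $\mathbf{a}_1$, everything lines up. I do not anticipate any real obstacle beyond this bookkeeping, since the matroid represented by $A'$ is manifestly the same as the one represented by $A$ (columns have only been permuted and rescaled by nonzero constants, which does not affect which $d$-subsets are linearly independent). Finally, iterating this one-step cyclic shift $a-1$ times produces a totally nonnegative matrix representing $M$ with respect to the ground set $\{a < a+1 < \dots < n < 1 < \dots < a-1\}$, proving the lemma.
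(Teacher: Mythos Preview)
Your proposal is correct and follows essentially the same approach as the paper: reduce to a single cyclic shift, form $A' = (\mathbf{a}_2, \dots, \mathbf{a}_n, (-1)^{d-1}\mathbf{a}_1)$, observe that its maximal minors coincide with those of $A$ up to the cyclic relabeling, and iterate. The paper simply asserts the minor identity (citing a remark of Postnikov), whereas you spell out the sign bookkeeping via the two cases $n \notin I$ and $n \in I$; the arguments are otherwise identical.
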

\begin{proof}
Let $M = M(A)$ for some totally nonnegative full rank 
$d \times n$ matrix $A$.  Write $A=(v_1,\dots,v_n)$
as a concatenation of its column vectors $v_i \in \R^d$.
Then, as noted in \cite[Remark 3.3]{postnikov},
the matrix 
$A' = (v_2, \dots, v_n, (-1)^{d-1} v_1)$ obtained by
cyclically shifting the columns of $A$ and multiplying the
last column by $(-1)^{d-1}$ is also totally nonnegative.
Moreover, 
$\Delta_I(A) = \Delta_{I'}(A')$, where $I'$ is the cyclic
shift of the subset $I$.  Therefore $M(A')$ is a positroid, which 
coincides with $M$ after cyclically shifting the ground set.
It follows that $M$ is a positroid on 
$\{2 < 3 < \dots < n < 1\}$, and by iterating this construction,
the lemma follows.
\end{proof}

\begin{proposition}\label{directsum}
Suppose we have a decomposition of $[n]$ into two 
cyclic intervals
$[\ell+1,m]$ and $[m+1,\ell]$.
Let $M$ be a positroid on the ordered ground set 
$[\ell+1, m]$
and let $M'$ be a positroid on the 
ordered ground set 
$[m+1,\ell]$.
Then $M \oplus M'$ is a positroid on the ordered ground 
set $[n] = \{1 < \dots < n\}$.
\end{proposition}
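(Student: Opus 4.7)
The plan is to reduce, via cyclic invariance of positroids, to the case where the two intervals are the standard consecutive subsets $\{1,\dots,k\}$ and $\{k+1,\dots,n\}$ of $[n]$, and then to exhibit an explicit totally nonnegative block-diagonal matrix realizing $M \oplus M'$.

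First I would invoke Lemma \ref{lem:cyclic}. Consider the cyclic reordering of $[n]$ in which $\ell+1$ is relabeled as the smallest element, so that the ground set becomes $\{\ell+1 < \ell+2 < \dots < n < 1 < \dots < \ell\}$. In this reordering, $[\ell+1, m]$ becomes the initial segment $\{1,\dots,k\}$ (where $k = |[\ell+1,m]|$) and $[m+1,\ell]$ becomes $\{k+1,\dots,n\}$, in both cases with their inherited total orders. Since by Lemma \ref{lem:cyclic} positroid-ness on $[n]$ is independent of the cyclic rotation of the ordering, it suffices to prove the statement after this relabeling. So I may assume $\{\ell+1, \dots, m\} = \{1,\dots,k\}$ and $\{m+1,\dots,\ell\} = \{k+1,\dots,n\}$ with the standard orders.

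Next, let $A$ be a totally nonnegative $d \times k$ matrix with $M(A)=M$, and let $A'$ be a totally nonnegative $d' \times (n-k)$ matrix with $M(A') = M'$. Form the $(d+d') \times n$ block-diagonal matrix
\[
B = \begin{pmatrix} A & 0 \\ 0 & A' \end{pmatrix}.
\]
Since $A$ and $A'$ have full row rank, so does $B$. For any $(d+d')$-subset $I \subseteq [n]$, decompose $I = I_1 \sqcup I_2$ with $I_1 = I \cap [1,k]$ and $I_2 = I \cap [k+1,n]$. The block structure forces $\Delta_I(B) = 0$ unless $|I_1|=d$ and $|I_2|=d'$, in which case $\Delta_I(B) = \Delta_{I_1}(A)\cdot \Delta_{I_2}(A') \geq 0$. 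Thus $B$ is totally nonnegative, and its bases are exactly the sets $I_1 \sqcup I_2$ with $I_1 \in \B(M)$ and $I_2 \in \B(M')$; by Definition \ref{def:sum}, these are the bases of $M \oplus M'$. Hence $M \oplus M'$ is a positroid.

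There is no serious obstacle: the entire argument rests on Lemma \ref{lem:cyclic} to align the two intervals with the block-diagonal structure of a matrix. The only small point to verify is that the maximal minors of $B$ factor as $\Delta_{I_1}(A)\cdot \Delta_{I_2}(A')$ with no sign, which is immediate because in $B$ the $A$-rows precede the $A'$-rows and the $I_1$-columns precede the $I_2$-columns, so no row or column permutation is needed to bring the nonvanishing submatrix into block-diagonal form.
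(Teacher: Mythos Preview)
Your proof is correct and follows essentially the same approach as the paper's: reduce via Lemma~\ref{lem:cyclic} to the case of two standard consecutive intervals, then realize the direct sum by a block-diagonal totally nonnegative matrix. The only cosmetic difference is that the paper first handles the special case $\ell=n$ with the block matrix and then invokes Lemma~\ref{lem:cyclic}, whereas you apply the cyclic shift first; the content is the same.
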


\begin{proof}
First assume $\ell=n$. Let $M$ be a positroid on the ground
set $[m]=\{1<\dots <m\}$ and  $M'$
be a positroid on the ground set 
$\{m+1 <\dots <n\}$.
Then $M=M(A)$ and $M'=M(B)$, where 
$A$ and $B$ are full rank 
$d \times m$ and $d' \times (n-m)$ matrices whose 
maximal minors are nonnegative.
Use $A$ and $B$ to 
form the $(d+d') \times n$ block matrix 
of the form 
\[
\begin{pmatrix}
A & 0 \\
0 & B
\end{pmatrix}.
\]
Clearly this matrix has all maximal minors nonnegative and represents
the direct sum $M(A) \oplus M(B)$ of the matroids $M(A)$ and $M(B)$.
It follows that $M \oplus M'$ is a positroid.
Now the proposition follows from Lemma \ref{lem:cyclic}.
\end{proof}

The following proposition says that 
positroids are closed under duality, restriction, and contraction.

\begin{proposition}\label{prop:closed}
Let $M$ be a positroid on $[n]$.  Then $M^*$ is also a positroid on $[n]$.
Furthermore, for any subset $S$ of $[n]$, the restriction
$M|S$ is a positroid on $S$, 
and the contraction $M/S$ is a positroid on 
$[n]-S$.  Here the total orders 
on $S$ and $[n]-S$ are the ones
inherited from $[n]$.  
\end{proposition}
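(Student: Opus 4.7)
The plan is three-pronged: establish closure under restriction first, then under duality, and finally deduce closure under contraction from these two via Proposition \ref{prop:drc}, which rewrites $M/S$ as $(M^* \mid ([n] \setminus S))^*$.

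For restriction, the natural move is to use Lemma \ref{lem:cyclic} to reduce to showing that deleting the last element of a positroid yields a positroid. Given a totally nonnegative full-rank $d \times n$ matrix $A$ representing $M$, let $A'$ be $A$ with its last column removed. Every maximal minor of $A'$ is a maximal minor of $A$, so $A'$ is totally nonnegative. If $A'$ still has rank $d$ then $M(A') = M \setminus \{n\}$ and we are done. Otherwise column $n$ was linearly independent of the others, making $n$ a coloop of $M$; then $M \setminus \{n\}$ has rank $d-1$, $A'$ also has rank $d-1$, and one checks directly that $M(A') = M \setminus \{n\}$. Iterating, with a cyclic relabelling between steps, produces $M \mid S$ as a positroid on $S$ for any subset $S$, with the inherited total order preserved by Lemma \ref{lem:cyclic}.

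For duality, the aim is to construct a totally nonnegative $(n-d) \times n$ matrix $A^\perp$ whose row space equals $\ker A$; any such $A^\perp$ has bases equal to the complements of the bases of $M$, so $M(A^\perp) = M^*$. The construction uses the classical alternating-sign orthogonal complement (cf.\ \cite[Remark 3.3]{postnikov}): after inserting signs $(-1)^i$ along the rows of $A$ in a suitable way, the maximal minor $\Delta_I(A^\perp)$ can be identified with $\Delta_{[n] \setminus I}(A)$ up to a single uniform global sign. Checking that this identification makes all maximal minors of $A^\perp$ simultaneously nonnegative is the crux of the argument.

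With duality and restriction in hand, contraction is immediate: given $S \subset [n]$, duality gives the positroid $M^*$, restriction (with the inherited total order on $[n] \setminus S$) gives the positroid $M^* \mid ([n] \setminus S)$, and a second application of duality yields $(M^* \mid ([n] \setminus S))^* = M/S$ as a positroid on $[n] \setminus S$. The main obstacle is the duality step: restriction reduces to a single-column computation via Lemma \ref{lem:cyclic}, and contraction is then formal, but assembling the complementary maximal minors of $A$ into a genuinely totally nonnegative $A^\perp$ requires a careful sign-tracking argument that does not follow from merely noting that $\Delta_I(A^\perp) = \pm \Delta_{[n] \setminus I}(A)$.
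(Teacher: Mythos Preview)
Your overall architecture is sound and is essentially the mirror image of the paper's: the paper proves duality and \emph{contraction} directly and then deduces restriction via Proposition~\ref{prop:drc}, whereas you propose to prove duality and \emph{restriction} directly and then deduce contraction. Both routes are valid and roughly equal in difficulty; the paper's choice of contracting the element $1$ (after putting $A$ in reduced row-echelon form so that column $1$ is either zero or $e_1$) is exactly symmetric to your choice of deleting the element $n$.

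Two points need tightening. First, in your restriction step, the coloop case is not as innocent as ``one checks directly.'' If $n$ is a coloop then $A'$ (the matrix with the last column removed) is a $d\times(n-1)$ matrix of rank $d-1$, so it is \emph{not} full rank and Definition~\ref{def:positroid} does not apply to it as written. You need an extra move: put $A$ in reduced row-echelon form, observe that a coloop $n$ must be a pivot column (hence equal to $e_d$), and then delete both column $n$ and row $d$ to obtain a full-rank $(d-1)\times(n-1)$ matrix $A''$ with $\Delta_I(A'') = \Delta_{I\cup\{n\}}(A)\ge 0$. This is precisely the dual of what the paper does for contraction of an independent element.

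Second, you correctly flag the duality step as the crux and then stop short of it. The paper does not stop: it puts $A$ in reduced row-echelon form with pivot set $J$, writes the free entries as $(-1)^{q_{st}}a_{st}$ with $q_{st}=|\{s+1,\ldots,t-1\}\cap J|$, and constructs $A'$ by transposing the free block and replacing each exponent $q_{st}$ by $q'_{st}=|\{s+1,\ldots,t-1\}\cap J^c|$. This explicit sign bookkeeping is what yields the exact equality $\Delta_I(A)=\Delta_{[n]\setminus I}(A')$ (no global sign ambiguity), and is the content you are missing. Your remark that ``$\Delta_I(A^\perp)=\pm\Delta_{[n]\setminus I}(A)$ is not enough'' is exactly right; the paper's contribution at this step is to pin down those signs.
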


\begin{proof}
Consider a full rank $d \times n$ real matrix $A$ such that 
$M = M(A)$ and all maximal minors of $A$ are nonnegative.
By performing row operations on $A$ and multiplying rows by $-1$ when necessary,
we may assume without loss of generality that $A$ is in reduced 
row-echelon form.
In particular, $A$ contains the identity matrix in 
columns $i_1, i_2,\dots, i_d$ for some $i_1< \dots< i_d$.
Let $J = \{i_1,\dots,i_d\}$ and $J^c = [n]-\{i_1,\dots,i_d\}$.
Let us label the rows of $A$ by $i_1,\dots, i_d$ from top to bottom, and the 
columns of $A$ by $1,2, \dots, n$ from left to right.
If the entry of $A$ in row $s$ and column $t$ is not determined
to be $0$ or $1$ by the row-echelon form
(here we have necessarily that $s<t$),
let us denote it by 
\begin{equation*}
(-1)^{q_{st}} a_{st}, \text{ where }
q_{st} = |\{s+1, s+2,\dots,t-1\} \cap J|.
\end{equation*}
See the first matrix in Example \ref{ex:duality}.

Now we construct an $(n-d) \times n$ matrix $A' = (a'_{ij})$, with 
rows labeled by $J^c=[n]-\{i_1,\dots,i_d\}$ from top to bottom, 
and columns labeled by $1,2,\dots,n$ from left to right, as follows.
First we place the identity matrix in columns $J$.
Next, we set to $0$ every entry 
of $A'$ which is in the same row as and to the right of a $1$.
For the remaining entries we define
$a'_{ij} = \pm a_{ji}$.  More specifically, 
for the entry  $a'_{ts}$ in row $t$ and column $s$
(here we have necessarily that $s<t$)  we set
\begin{equation*}
a'_{ts} = (-1)^{q'_{st}} a_{st}, \text{ where }
q'_{st} = |\{s+1, s+2,\dots,t-1\} \cap J^c|.
\end{equation*}
See the second matrix in Example \ref{ex:duality}.  It is not hard to check that for each 
$I \in \binom{[n]}{d}$, we have that 
 $\Delta_I(A) = \Delta_{[n]-I}(A')$.  It follows that 
$M(A')$ is the dual $M^*$ of $M$ and is also a positroid, as we wanted.

We will now prove that the contraction $M/S$ is a positroid on $[n] - S$.
If $S_1 \cap S_2 = \emptyset$ then $(M/S_1)/S_2 = M/(S_1 \cup S_2)$, so by induction it is enough to prove 
that $M/S$ is a positroid for $S$ a subset of size $1$.
Moreover, in view of Lemma \ref{lem:cyclic}, we can assume without
loss of generality that $S=\{1\}$.
Again, suppose that $A=(a_{ij})$ is a full rank $d \times n$ real matrix 
in reduced row-echelon form such that 
$M = M(A)$ and all maximal minors of $A$ are nonnegative.
If $\{1\}$ is a dependent subset in $M$ then the first column of $A$ contains only zeros,
and $M/S$ is the rank $d$ positroid on $S$ represented by the submatrix of $A$ obtained by eliminating its first column.
If $\{1\}$ is an independent subset in $M$ then the first column of $A$ is the vector $e_1 \in \R^d$.
The matroid $M/S$ is then represented by the submatrix $A'$ of $A$ obtained by eliminating its first column and its first row,
which also has nonnegative maximal minors since $\Delta_I(A') = \Delta_{\{1\} \cup I}(A)$.

Finally, since positroids are closed under duality and contraction, by 
Proposition \ref{prop:drc} they are also closed under restriction.
\end{proof}

\begin{example}\label{ex:duality}
Let \[A=
\begin{pmatrix}
0 & 1 & a_{23} & 0 & -a_{25} & -a_{26}\\
0 & 0 & 0 & 1 & a_{45} & a_{46}
\end{pmatrix}\]
represent a matroid $M(A)$ on $[6]$.
Then the matrix $A'$ as defined in the proof of Proposition
\ref{prop:closed} is given by 
\[A' = 
\begin{pmatrix}
1 & 0 & 0 & 0 & 0 & 0\\
0 & a_{23} & 1 & 0 & 0 & 0\\
0 & -a_{25} & 0 & a_{45} & 1 & 0\\
0 & a_{26} & 0 & -a_{46} & 0 & 1
\end{pmatrix},
\]
and $M(A') = M(A)^*$.  Moreover, 
for each $I \in \binom{[6]}{2}$, 
$\Delta_I(A) = \Delta_{[6]- I}(A')$.
\end{example}

\section{Combinatorial objects parameterizing positroids}
\label{sec:objects}

In \cite{postnikov}, Postnikov gave several families of combinatorial objects in bijection with positroids.   In this section we will start by defining
his notion of \emph{Grassmann necklace}, and explain how each one 
naturally labels a positroid.  We will then define
\emph{decorated permutations}, 
\emph{$\Le$-diagrams}, 
and equivalence classes of \emph{reduced plabic graphs}, and give
(compatible) bijections among all these objects.  This will give us a 
canonical way to label each positroid by a Grassmann necklace, 
a decorated permutation, a $\Le$-diagram, and a plabic graph.

\subsection{Grassmann necklaces}
\label{subsec:necklaces}

\begin{definition}
Let $d \leq n$ be positive integers. A \emph{Grassmann necklace} of type $(d,n)$ is a sequence $(I_1, I_2, \dotsc, I_n)$ of $d$-subsets $I_k \in \binom{[n]}{d}$ such that for any $i \in [n]$
\begin{itemize}
 \item if $i \in I_i$ then $I_{i+1} = I_i - \{i\} \cup \{j\}$ for some $j \in [n]$,
 \item if $i \notin I_i$ then $I_{i+1} = I_i$,
\end{itemize}
where $I_{n+1} = I_1$.
\end{definition}

The \emph{$i$-order} $<_i$ on the set $[n]$ is the total order 
\[
i \,<_i\, i+1 \,<_i\, \dotsb \,<_i\, n \,<_i\, 1 \,<_i\, \dotsb \,<_i\, i-2 \,<_i\, i-1.
\]
For any rank $d$ matroid $M = ([n], \B)$, let $I_k$ be the lexicographically minimal basis of $M$ with respect to the order $<_k$, and denote
\[
\I(M) := (I_1, I_2, \dotsc, I_n).
\]

\begin{proposition}[{\cite[Lemma 16.3]{postnikov}}]
For any matroid $M = ([n], \B)$ of rank $d$, the sequence $\I(M)$ is a Grassmann necklace of type $(d,n)$.
\end{proposition}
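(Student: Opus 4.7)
The plan is to rely on the greedy characterization of the lex-min basis: for any total order $\prec$ on $[n]$ and any matroid $M$, an element $e$ lies in the lex-min basis $B_\prec$ if and only if $e \notin \mathrm{cl}(\{f : f \prec e\})$, which is the standard correctness statement of the matroid greedy algorithm. The key observation is that $<_i$ and $<_{i+1}$ differ only in the position of $i$, which is the minimum under $<_i$ and the maximum under $<_{i+1}$; hence for every $k \neq i$ we have $\{f : f <_i k\} = \{f : f <_{i+1} k\} \cup \{i\}$.

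I would then split into the two cases from the definition of a Grassmann necklace. If $i \notin I_i$, applying the characterization at $k = i$ shows $i \in \mathrm{cl}(\emptyset)$, i.e., $i$ is a loop. Loops lie in no basis and in every closure, so $i \notin I_{i+1}$ and $\mathrm{cl}(\{f : f <_i k\}) = \mathrm{cl}(\{f : f <_{i+1} k\})$ for every $k \neq i$; the characterization then yields $I_i = I_{i+1}$, matching the second bullet of the definition. If instead $i \in I_i$, I would prove the inclusion $I_i \setminus \{i\} \subseteq I_{i+1}$ as follows: for $k \in I_i$ with $k \neq i$ the characterization gives $k \notin \mathrm{cl}(\{f : f <_{i+1} k\} \cup \{i\})$, and monotonicity of closure immediately yields $k \notin \mathrm{cl}(\{f : f <_{i+1} k\})$, so $k \in I_{i+1}$. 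Since both $I_i$ and $I_{i+1}$ have size $d$, this inclusion forces $|I_{i+1} \setminus I_i| \leq 1$, so either $I_i = I_{i+1}$ (in which case we take $j = i$, which happens precisely when $i$ is a coloop) or there is a unique $j \in I_{i+1} \setminus I_i$ with $I_{i+1} = I_i - \{i\} \cup \{j\}$, matching the first bullet.

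The main obstacle is really just being careful with the two total orders and keeping track of which element sits where under each; once the greedy/closure characterization is in hand, the whole argument reduces to a single application of the monotonicity of the closure operator, and no direct appeal to the basis exchange axiom is required.
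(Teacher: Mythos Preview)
Your argument is correct. The greedy characterization $e \in B_\prec \Longleftrightarrow e \notin \mathrm{cl}(\{f : f \prec e\})$ is standard (since the greedily selected set is always a basis of the initial segment, so its closure equals the closure of that segment), and from there both cases go through as you describe. The observation that $\{f : f <_i k\} = \{f : f <_{i+1} k\} \cup \{i\}$ for $k \neq i$ is exactly what is needed, and the monotonicity of closure does the rest.

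There is nothing to compare against here: the paper states this proposition with a citation to \cite[Lemma 16.3]{postnikov} and gives no proof of its own. Your closure-based argument is a clean, self-contained way to establish it; the only remark I would add is that the parenthetical ``which happens precisely when $i$ is a coloop'' is correct but not needed for the proof, since the Grassmann necklace definition already allows $j = i$ in the first bullet.
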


In the case where the matroid $M$ is a positroid we can actually recover $M$ from its Grassmann necklace, as described below.

Let $i \in [n]$. The \emph{Gale order} on $\binom{[n]}{d}$ (with respect to $<_i$) is the partial order $\leq_i$ defined as follows: for any two $d$-subsets $S = \{ s_1 <_i \dotsb <_i s_d\} \subset [n]$ and $T = \{ t_1 <_i \dotsb <_i t_d\} \subset [n]$, we have $S \leq_i T$ if and only if $s_j \leq_i t_j$ for all $j \in [d]$. 

\begin{theorem}[\cite{postnikov, oh}]\label{r:corres}
Let $\I = (I_1, I_2, \dotsc, I_n)$ be a Grassmann necklace of type $(d,n)$. Then the collection
\[
 \B(\I) := \left\{ B \in \binom{[n]}{d} \bigmid B \geq_j I_j \text{ for all } j \in [n] \right\}
\]
is the collection of bases of a rank $d$ positroid $\M(\I) := ([n], \B(\I))$. Moreover, for any positroid $M$ we have $\M(\I(M)) = M$.
\end{theorem}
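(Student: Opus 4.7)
The plan is to split Theorem~\ref{r:corres} into two parts: \emph{surjectivity} of the assignment $M \mapsto \I(M)$, which constructs a positroid from an abstract necklace, and \emph{injectivity}, which characterizes its bases by the Gale-order conditions. Once both are established, the theorem is immediate: given $\I$, surjectivity produces a positroid $M$ with $\I(M) = \I$, and then injectivity identifies $\B(M)$ with $\B(\I)$, so $\M(\I) = M$ is indeed the set of bases of a positroid. The ``moreover'' identity $\M(\I(M)) = M$ drops out for free. As a warm-up, note that each $I_k$ itself lies in $\B(\I)$: iterating the necklace rule from $I_j$ to $I_k$ gives $I_k \geq_j I_j$ for every $j$, so the rank of $\M(\I)$ will automatically be $d$.

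For surjectivity, I would read off from $\I$ the decorated permutation $\pi$ defined by $\pi(i) = j$ whenever $I_{i+1} = I_i \setminus \{i\} \cup \{j\}$, with the standard conventions for the fixed points depending on whether $i \in I_i$. Passing through the bijections reviewed in Section~\ref{sec:objects} produces a $\Le$-diagram $D$, and Postnikov's explicit parameterization then yields a totally nonnegative $d \times n$ matrix $A_D$. A direct calculation of the lexicographically minimum basis of $M(A_D)$ in each cyclic order $<_i$, following the combinatorial structure of $D$, shows that the Grassmann necklace of $M(A_D)$ recovers $\I$.

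For injectivity, given a positroid $M$ I would show $\B(M) = \B(\I(M))$. The inclusion $\B(M) \subseteq \B(\I(M))$ follows from Gale's theorem, which says that in any matroid the lexicographically minimum basis with respect to a linear order coincides with the Gale-minimum basis with respect to that order; applying this to each cyclic order $<_j$ gives $B \geq_j I_j$ for every basis $B$ of $M$. The reverse inclusion is the hard direction: it asserts that the cyclic family of Gale-order conditions is \emph{sufficient} to characterize bases, which fails for arbitrary matroids but holds when $M$ is a positroid. I would approach this by translating the conditions $B \geq_j I_j$ into cyclic-interval rank inequalities of the form $|B \cap [i,j]| \geq r_M([i,j])$, where the data $r_M([i,j])$ is encoded in $\I(M)$, and then invoking the fact that a $d$-subset satisfying these cyclic rank inequalities must be a basis of the positroid $M$.

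The main obstacle is this reverse inclusion, which is the substantive content of Oh's theorem. Rigorously establishing it requires exploiting the positroid structure beyond what is available for general matroids, either through an inductive argument on the $\Le$-diagram of $M$, or through the cyclic-interval description of positroid polytopes discussed in Section~\ref{sec:polytopes}. Everything else---the construction of $M$ from $\pi$, the necklace computation, and the easy Gale-order inclusion---is essentially bookkeeping.
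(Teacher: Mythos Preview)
The paper does not prove this theorem; it is quoted from \cite{postnikov, oh} and used as a black box throughout. So there is no in-paper argument to compare your proposal against---what you have written is really a sketch of how one would reproduce Oh's proof, not a reconstruction of anything the authors do here.

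As an outline of Oh's argument your decomposition is broadly correct: the inclusion $\B(M) \subseteq \B(\I(M))$ is Gale's characterization of lex-minimum bases applied in each cyclic order, and the reverse inclusion is the substantive part. One genuine caution, however: you propose to close the hard direction ``through the cyclic-interval description of positroid polytopes discussed in Section~\ref{sec:polytopes}.'' Within this paper that route is circular. The statement of Proposition~\ref{r:inequalities} already writes ``let $M = \M(\I)$ be its corresponding positroid,'' presupposing the first assertion of Theorem~\ref{r:corres}, and its proof identifies the vertices of the described polytope with $\B(\I)$ precisely by appealing to the definition of $\M(\I)$. More seriously, the proof of Proposition~\ref{prop:facets} explicitly uses Proposition~\ref{cor:contain} together with the equality $M = \M(\I(M))$ for positroids---exactly the ``moreover'' clause you are trying to establish. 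So the polytope machinery of Section~\ref{sec:polytopes} sits logically downstream of Theorem~\ref{r:corres} and cannot be invoked to prove it. The $\Le$-diagram / Grassmann-necklace induction you allude to is closer to what Oh actually does, and is the route you would need to flesh out.
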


Theorem \ref{r:corres} shows that $\M$ and $\I$ are inverse bijections between the set of Grassmann necklaces of type $(d,n)$ and the set of rank $d$ positroids on the set $[n]$. 

We record the following fact for later use. It follows directly from the definitions.

\begin{proposition}\label{cor:contain}
Let $M$ be a matroid. Then every basis of $M$ is also a basis of the positroid $\M(\I(M))$.
\end{proposition}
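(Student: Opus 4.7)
The plan is to deduce the statement from the classical equivalence between lexicographic minimality and Gale-order minimality for bases of a matroid. More precisely, I would rely on the following standard fact, usually attributed to Gale: for any matroid $N$ on a ground set totally ordered by $<$, the greedy basis $I$ (obtained by scanning elements of the ground set in increasing order and keeping each one whose addition preserves independence) coincides with the lexicographically minimum basis, and moreover satisfies $I \leq B$ in the Gale order $\leq$ for every basis $B$ of $N$.

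Granting this fact, the proposition is immediate. Let $M$ be a matroid on $[n]$ of rank $d$, let $B$ be any basis of $M$, and fix an arbitrary $j \in [n]$. By definition, $I_j$ is the lexicographically minimum basis of $M$ with respect to the order $<_j$. Applying Gale's theorem to the matroid $M$ equipped with the total order $<_j$ gives $I_j \leq_j B$, i.e., $B \geq_j I_j$. Since this holds for every $j \in [n]$, the description of $\B(\I(M))$ in Theorem \ref{r:corres} shows that $B \in \B(\M(\I(M)))$, so $B$ is a basis of the positroid $\M(\I(M))$.

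The only genuinely nontrivial ingredient is Gale's theorem, and this is a short exchange-axiom argument: if $B_1, B_2$ are bases of $N$ and $B_1$ is the lex-minimum basis, sort both in increasing order and suppose for contradiction that they disagree in Gale order at some position $k$. Take the smallest element of $B_2 \setminus B_1$ and use the basis exchange axiom in $N$ to swap it for an even smaller element of $B_1 \setminus B_2$, producing a basis lex-smaller than $B_1$, a contradiction. The main obstacle, such as it is, is recognizing that the Gale order on $\binom{[n]}{d}$ is precisely the partial order in which lex-minimum bases become $\leq$-minimum; once this is in place, the proof is essentially a one-line invocation of Theorem \ref{r:corres}.
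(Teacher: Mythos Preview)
Your approach is correct and is essentially what the paper has in mind: the paper's entire proof is the sentence ``It follows directly from the definitions,'' and the content hidden in that sentence is exactly the fact you name (Gale's theorem, that the lexicographically/greedily minimal basis is the Gale-order minimum among all bases), after which Theorem~\ref{r:corres} does the rest.

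One small correction to your sketch of Gale's theorem: the exchange you describe has the inequality the wrong way around. The clean argument is by augmentation rather than basis exchange. If $B_1=\{a_1<\cdots<a_d\}$ is lex-minimal and $B_2=\{b_1<\cdots<b_d\}$ is any basis with $b_k<a_k$ at the smallest such $k$, apply the independence augmentation axiom to $\{a_1,\ldots,a_{k-1}\}$ and $\{b_1,\ldots,b_k\}$ to find some $b_j\notin\{a_1,\ldots,a_{k-1}\}$ with $\{a_1,\ldots,a_{k-1},b_j\}$ independent; then $b_j\le b_k<a_k$ contradicts the greedy (hence lex-minimal) choice of $a_k$. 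Your phrase ``swap it for an even smaller element of $B_1\setminus B_2$'' points in the wrong direction and does not produce a basis lex-smaller than $B_1$. This does not affect the validity of the overall proposal, since you are invoking Gale's theorem as a known result.
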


Note that for any matroid $M$, the positroid $\M(\I(M))$ is the smallest positroid containing $M$, in the sense that any positroid containing all bases of $M$ must also contain all bases of $\M(\I(M))$.

\subsection{Decorated permutations}

The information contained in a Grassmann necklace can be encoded in a more compact way, as follows. 

\begin{definition}
A \emph{decorated permutation} of the set $[n]$ is a bijection $\pi : [n] \to [n]$ whose fixed points are colored either ``clockwise'' 
or ``counterclockwise.'' 
We denote a clockwise fixed point by $\pi(j) = \underline{j}$ and 
a counterclockwise fixed point by $\pi(j) = \overline{j}$.
A \emph{weak $i$-excedance} of the decorated permutation $\pi$ is an element $j \in [n]$ such that either $j <_i \pi(j)$ or 
$\pi(j)=\overline{j}$ is a ``counterclockwise'' fixed point. 
The number of weak $i$-excedances of $\pi$ is the same for any $i \in [n]$; we will simply call it the number of \emph{weak excedances} of $\pi$.
\end{definition}

Given a Grassmann necklace $\I = (I_1, I_2, \dotsc, I_n)$ we can construct a decorated permutation $\pi_{\I}$ of the set $[n]$ in the following way.
\begin{itemize}
 \item If $I_{i+1} = I_i - \{i\} \cup \{j\}$ for $i \neq j$ then $\pi_{\I}(j):=i$.
 \item If $I_{i+1} = I_i$ and $i \notin I_i$ then $\pi_{\I}(i):=\underline{i}$.
 \item If $I_{i+1} = I_i$ and $i \in I_i$ then $\pi_{\I}(i):=\overline{i}$.
\end{itemize}
Conversely, given a decorated permutation $\pi$ of $[n]$ we can construct a Grassmann necklace $\I_{\pi} = (I_1, I_2, \dotsc, I_n)$ by letting $I_k$ be the set of weak $k$-excedances of $\pi$.
It is straightforward to verify the following.
\begin{proposition}
The maps $\I \mapsto \pi_{\I}$ and $\pi \mapsto \I_{\pi}$ are inverse bijections between the set of Grassmann necklaces of type $(d,n)$ and the set of decorated permutations of $[n]$ having exactly $d$ weak excedances.
\end{proposition}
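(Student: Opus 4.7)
The plan is to prove the key claim that, for any Grassmann necklace $\I = (I_1, \ldots, I_n)$, the set $I_k$ equals the set of weak $k$-excedances of $\pi_\I$ for every $k$. This single lemma yields everything: well-definedness of $\pi_\I$, the correct excedance count $d$, and the identity $\I_{\pi_\I} = \I$, since $\I_\pi$ is by definition the sequence of weak-excedance sets.

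To prove the lemma, I would first track the dynamics of each element $j$ through the cyclic sequence $I_1, \ldots, I_n$. A direct reading of the necklace axiom shows that an element can only \emph{leave} the necklace at the step matching its own label, and that $j \neq i$ can \emph{enter} at step $i$ only via $I_{i+1} = I_i - \{i\} \cup \{j\}$, at which point the construction records $\pi_\I(j) = i$. Thus $j$ enters and leaves at most once in one full cycle, giving three mutually exclusive possibilities: $j$ is never in the necklace (forcing $\pi_\I(j) = \underline{j}$), $j$ is always in the necklace (forcing $\pi_\I(j) = \overline{j}$), or $j$ enters at some unique step $i \neq j$ and leaves at step $j$ (giving $\pi_\I(j) = i$). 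This makes $\pi_\I$ a well-defined bijection, since each value $i \in [n]$ is used exactly once, namely at step $i$. Comparing $I_k$ with the weak $k$-excedances is then a case analysis on $\pi_\I(j)$: the non-fixed case gives $j \in I_k$ iff $k$ lies in the cyclic interval $[i+1, j]$, which after unpacking the cyclic order $<_k$ matches the weak $k$-excedance condition $j <_k \pi_\I(j)$; the two fixed-point cases match trivially.

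For the reverse direction, given a decorated permutation $\pi$ with $d$ weak excedances I would verify that $\I_\pi = (I_1, \ldots, I_n)$ satisfies the necklace axiom by comparing the orders $<_k$ and $<_{k+1}$: they differ only in the position of $k$ (smallest vs.\ largest), so the only elements whose excedance status can change are $k$ and $\pi^{-1}(k)$. A short case analysis shows that $I_{k+1} = I_k$ when $\pi(k) \in \{\overline{k}, \underline{k}\}$ and $I_{k+1} = I_k - \{k\} \cup \{\pi^{-1}(k)\}$ otherwise, which is exactly the necklace axiom; reading off $\pi_{\I_\pi}$ from these transitions then recovers $\pi$ immediately.

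The main technical subtlety is bookkeeping the cyclic wraparound, in particular verifying that $1 \in [i+1, j]$ iff $i \geq j$ so that the characterization of $I_1$ in terms of weak $1$-excedances comes out correctly in the standard order; once this indexing is set up, the remaining verifications are routine case checks on the decoration of $\pi(j)$.
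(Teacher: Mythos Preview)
The paper itself does not give a proof: it simply states that ``it is straightforward to verify'' the proposition, and moves on. Your proposal is exactly the kind of direct verification the paper has in mind, and it is correct. Tracking each $j$ through one full pass of the necklace to see that it enters at most once (only at the step $i$ with $\pi_\I(j)=i$) and leaves at most once (only at step $j$) is the natural way to establish the key identity $I_k = \{\text{weak $k$-excedances of }\pi_\I\}$, and your reverse check that shifting from $<_k$ to $<_{k+1}$ only flips the excedance status of $k$ and $\pi^{-1}(k)$ is the standard argument for the necklace axiom. The cyclic-interval bookkeeping you flag is indeed the only place one has to be careful, but it goes through as you describe.
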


\subsection{Le-diagrams}

\begin{definition}
Fix $d$ and $n$. For any partition $\lambda$, let
$Y_{\lambda}$ denote the Young diagram associated to $\lambda$.  
A {\it $\Le$-diagram}
(or Le-diagram) $D$ of shape $\lambda$ and type $(d,n)$
is a Young diagram $Y_{\lambda}$ contained in a $d \times (n-d)$ rectangle,
whose boxes are filled with $0$s and $+$s in such a way that the
{\it $\Le$-property} is satisfied:
there is no $0$ which has a $+$ above it in the same column and a $+$ to its
left in the same row.
See Figure \ref{fig:Le} for
an example of a $\Le$-diagram.
\end{definition}
\begin{figure}[ht]
\begin{center}
 \includegraphics[scale=0.9]{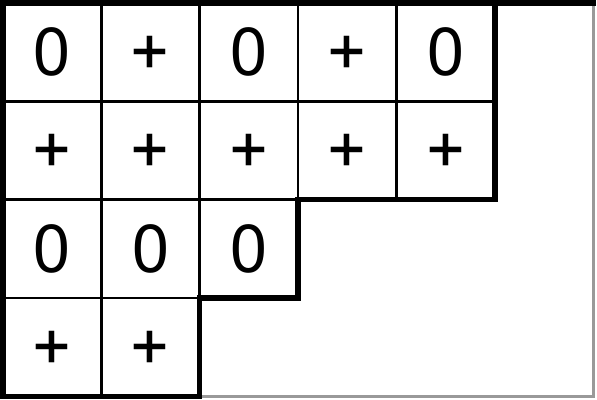}
 \caption{A Le-diagram with $\lambda=5532, d=4,$ and $n=10$.}
 \label{fig:Le}
\end{center}
\end{figure}

\begin{lemma}
The following algorithm is a bijection between $\Le$-diagrams of type 
$(d,n)$ and decorated permutations on $n$ letters with $d$ weak excedances.
\begin{enumerate}
\item Replace each $+$ in the $\Le$-diagram $D$
 with an elbow joint $\textelbow$, and each $0$ in $D$ with a cross
$\textcross$.
\item Note that the south and east
border of $Y_\lambda$ gives rise to a length-$n$
path from the northeast corner to the southwest corner of the
$d \times (n-d)$ rectangle.  Label the edges of this path with the
numbers $1$ through $n$.
\item Now label the edges of the north and west border of $Y_{\lambda}$
so that opposite horizontal edges and opposite vertical edges
have the same label.
\item View the resulting ``pipe dream"
as a permutation $\pi \in S_n$, by following the ``pipes" from 
the northwest border to the southeast border of the Young diagram.
If the pipe originating at label $i$ ends at the label $j$,
we define $\pi(i)=j$.
\item If $\pi(j)=j$ and $j$ labels two horizontal (respectively,
vertical) edges of $Y_{\lambda}$, then $\pi(j):=\underline{j}$
(respectively, $\pi(j):=\overline{j}$).
\end{enumerate}
\end{lemma}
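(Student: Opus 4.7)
The plan is to verify that the forward algorithm defines a permutation with $d$ weak excedances, and then to construct an explicit inverse, with the $\Le$-property as the decisive ingredient. For the forward direction, replacing each $+$ by an elbow and each $0$ by a cross turns $Y_\lambda$ into a wiring diagram with no branching, in which each pipe locally moves only to the east or to the south. Hence no pipe closes on itself, and each pipe enters at a unique edge of the northwest border of $Y_\lambda$ and exits at a unique edge of the southeast border. Identifying opposite horizontal and vertical edges via the labelings in steps (2)--(3) identifies both borders with $[n]$, so tracing the pipes gives a bijection $\pi : [n] \to [n]$. Rule (5) correctly distinguishes the two kinds of fixed points: a pipe with $\pi(j) = j$ must traverse $Y_\lambda$ in a straight line (only crosses, no elbows), either along a single row or along a single column.

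Next I would verify that the image has exactly $d$ weak excedances. The southeast boundary path of $Y_\lambda$ runs from the NE corner to the SW corner of the ambient $d \times (n-d)$ rectangle, so it takes precisely $d$ south steps and $n-d$ west steps. A direct analysis of how pipes exit the diagram shows that the $d$ labels on south steps, together with the counterclockwise fixed points (which by construction sit on vertical edges), coincide with the set of weak $1$-excedances of $\pi$; so $\pi$ has exactly $d$ weak excedances.

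Finally, to build the inverse, given a decorated permutation $\pi$ with $d$ weak excedances, the positions of its weak excedances determine which steps of the southeast boundary are vertical and which are horizontal, and hence determine a unique shape $\lambda$ inside the $d \times (n-d)$ rectangle. I would then place the pipes of $\pi$ in the resulting wiring diagram and fill each box by $+$ if the two pipes meeting there bend and by $0$ if they cross. The main obstacle is showing that the resulting filling satisfies the $\Le$-property: if some $0$ had a $+$ both strictly above it in the same column and strictly to its left in the same row, one could trace the three pipes involved and derive a contradiction with $\pi$ being a permutation, either by forcing two pipes to share an endpoint or by producing an extra crossing inconsistent with the local east/south monotonicity established in the first step. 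Once the $\Le$-property is secured, the two maps are mutually inverse by construction, since both are defined by the same local dictionary ($+ \leftrightarrow$ elbow, $0 \leftrightarrow$ cross), completing the bijection.
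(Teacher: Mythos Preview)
The paper does not actually prove this lemma; it is stated as a known construction (due to Postnikov) and illustrated by Figure~\ref{fig:pipedream}. So there is no ``paper's own proof'' to compare against, and your proposal must stand on its own.

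The forward direction of your argument is essentially fine, if terse: the east/south monotonicity of pipes does guarantee a well-defined map $[n]\to[n]$, and the claim that the vertical boundary edges are exactly the weak excedances is true, though ``a direct analysis shows'' is not yet a proof.

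The real gap is in your inverse. You write: ``place the pipes of $\pi$ in the resulting wiring diagram and fill each box by $+$ if the two pipes meeting there bend and by $0$ if they cross.'' This is circular. The wiring diagram does not exist until the boxes are filled; the pipes are \emph{determined} by the filling, not the other way around. Given only $\pi$ and the shape $\lambda$, there is no canonical way to ``place the pipes'': in general several distinct $0/+$ fillings of $Y_\lambda$ yield the same permutation (whenever two pipes cross twice, one can uncross them without changing the endpoints). The $\Le$-property is precisely what singles out one filling among these --- it is equivalent to the pipe dream being reduced (no two pipes cross more than once). So you cannot first draw the pipes and then check the $\Le$-property afterward; you must build the filling directly from $\pi$ and then prove both that it satisfies the $\Le$-property and that its pipe permutation is $\pi$.

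A workable inverse goes box by box (for instance, column by column from right to left), deciding each entry from the permutation data already committed, and then one verifies the $\Le$-property inductively. Alternatively one passes through an intermediate object (pairs $v\le w$ in Bruhat order, or Grassmann necklaces) for which both bijections are already known. Either way, the sentence ``one could trace the three pipes involved and derive a contradiction'' is not a substitute for this construction; as written, the map whose well-definedness you would be checking has not yet been defined.
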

Figure \ref{fig:pipedream} illustrates this procedure for the $\Le$-diagram of Figure \ref{fig:Le},
giving rise to the decorated permutation $\underline{1},7,9,3,2,\overline{6},5,10,4,8$.
\begin{figure}[ht]
\begin{center}
 \includegraphics[scale=0.9]{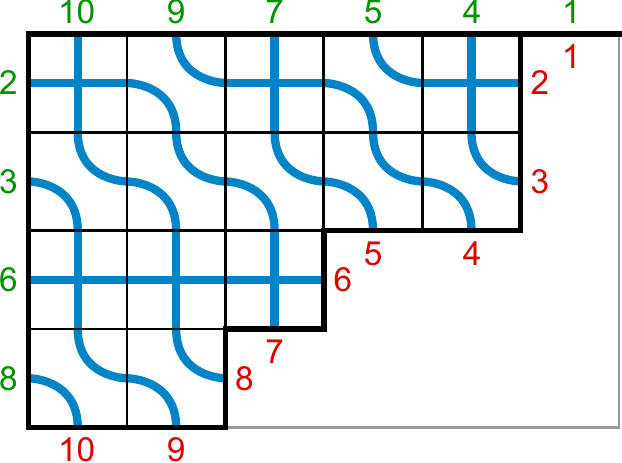}
 \caption{A ``pipe dream''.}
 \label{fig:pipedream}
\end{center}
\end{figure}

\subsection{Plabic graphs}
\begin{definition}
A {\it plabic graph\/}\footnote{``Plabic" stands for ``planar bi-colored."}  is an undirected graph $G$ drawn inside a disk
(considered modulo homotopy)
with $n$ {\it boundary vertices\/} on the boundary of the disk,
labeled $b_1,\dots,b_n$ in clockwise order, as well as some
colored {\it internal vertices\/}.
These  internal vertices
are strictly inside the disk and are
colored in black and white. Moreover, 
each boundary vertex $b_i$ in $G$ is incident to a single edge.

A {\it perfect orientation\/} $\O$ of a plabic graph $G$ is a
choice of orientation of each of its edges such that each
black internal vertex $u$ is incident to exactly one edge
directed away from $u$; and each white internal vertex $v$ is incident
to exactly one edge directed towards $v$.
A plabic graph is called {\it perfectly orientable\/} if it admits a perfect orientation.
Let $G_\O$ denote the directed graph associated with a perfect orientation $\O$ of $G$. 
The {\it source set\/} $I_\O \subset [n]$ of a perfect orientation $\O$ is the set of $i$ for which $b_i$
is a source of the directed graph $G_\O$. Similarly, if $j \in \overline{I}_{\O} := [n] - I_{\O}$, then $b_j$ is a sink of $\O$.
\end{definition}

Figure \ref{fig:orientation} shows a plabic graph with a perfect orientation. In that example, $I_{\O} = \{2,3,6,8\}$.

All perfect orientations of a fixed plabic graph
$G$ have source sets of the same size $d$, where
$d-(n-d) = \sum \mathrm{color}(v)\cdot(\deg(v)-2)$. 
Here the sum is over all internal vertices $v$, $\mathrm{color}(v) = 1$ for a black vertex $v$, 
and $\mathrm{color}(v) = -1$ for a white vertex;
see~\cite{postnikov}.  In this case we say that $G$ is of {\it type\/} $(d,n)$.

The following construction, which comes from \cite[Section 20]{postnikov},
associates a plabic graph to a $\Le$-diagram.

\begin{definition}\label{def:Le-plabic}
Let $D$ be a $\Le$-diagram. 
Delete the $0$s, and replace each $+$ with a vertex.
From each vertex we construct a hook which goes east
and south, to the border of the Young diagram.
The resulting diagram is called the ``hook diagram'' $H(D)$. After
replacing the edges along the south and east border of the Young diagram with  boundary vertices
labeled by $1,2,\dots,n$, we obtain a graph with $n$ boundary vertices and one internal vertex
for each $+$ from $D$.  Then
we replace the local region around each internal vertex
as in Figure \ref{fig:local}, and embed the resulting bi-colored graph
in a disk. Finally, for each clockwise (respectively, counterclockwise) fixed
point, we add a black (respectively, white) boundary ``lollipop'' at the corresponding
boundary vertex.  
This gives rise to a plabic graph which we call $G(D)$. 
\end{definition}

\begin{figure}[ht]
\begin{center}
 \includegraphics[scale=0.6]{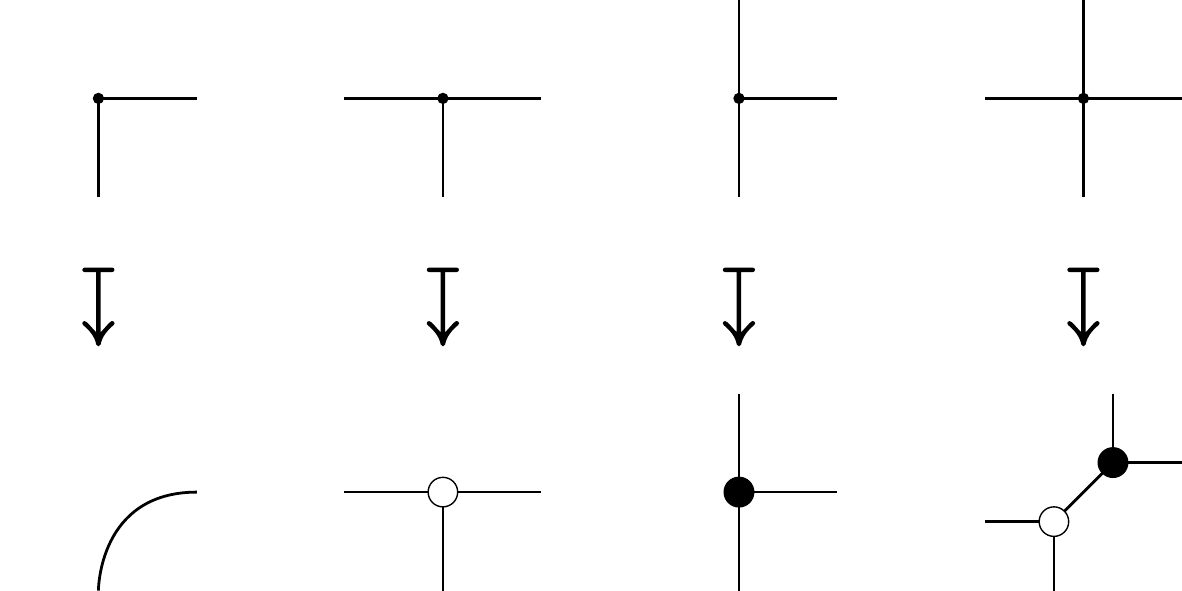}
 \caption{Local substitutions for getting the plabic graph $G(D)$ from the hook diagram $H(D)$.}
 \label{fig:local}
\end{center}
\end{figure}

Figure \ref{fig:hook} depicts the hook diagram corresponding to the 
$\Le$-diagram given in Figure \ref{fig:Le},
and Figure \ref{fig:plabic} shows its corresponding plabic graph.

\begin{figure}[ht]
  \centering
  \subfloat[][A hook diagram.]{
   \includegraphics[scale=0.9]{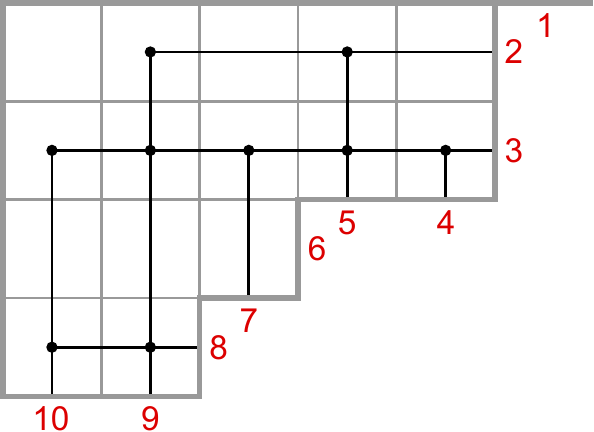}
   \label{fig:hook}
  }
  \qquad \quad
  \subfloat[][A plabic graph.]{
    \includegraphics[scale=0.9]{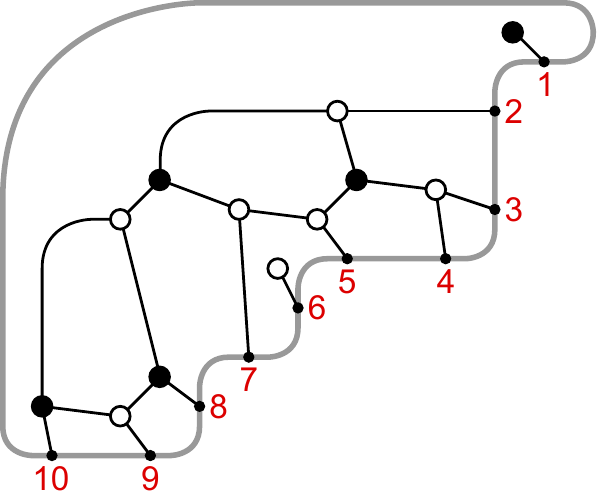}
   \label{fig:plabic}
  }
  \caption{}
\end{figure}

More generally each $\Le$-diagram $D$ 
is associated with a family of \emph{reduced plabic graphs}
consisting of $G(D)$ together with other plabic graphs which can be obtained
from $G(D)$ by certain \emph{moves}, see \cite[Section 12]{postnikov}.

From the plabic graph constructed in Definition \ref{def:Le-plabic} 
(and more generally from any 
leafless {reduced} plabic graph $G$ without isolated components), 
one may read off the corresponding decorated permutation $\pi_G$ as follows.

\begin{definition}\label{def:rules}
Let $G$ be a reduced plabic graph as above
with boundary vertices $b_1,\dots, b_n$.  
The \emph{trip} from $b_i$ is the path 
obtained by starting from $b_i$ and traveling along 
edges of $G$ according to the rule that each time we reach an internal white vertex we turn 
right, and each time we reach an internal black vertex we turn left. 
This trip ends at some boundary vertex $b_{\pi(i)}$.  
If the starting and ending points of the trip are the same vertex $b_j$, 
we set the color of the fixed point $\pi(j)=j$ to match the orientation of the trip
(clockwise or counterclockwise.)
In this way we associate a decorated permutation 
$\pi_G=(\pi(1),\dots,\pi(n))$ to each reduced plabic graph $G$, which 
is called the 
\emph{decorated trip permutation} of $G$.
\end{definition}

We invite the reader to verify that when we apply these rules to Figure \ref{fig:plabic} we obtain the trip permutation $\underline{1},7,9,3,2,\overline{6},5,10,4,8$.

\begin{remark}
All bijections that we have defined in this section are compatible.  This gives us
a canonical way to label each positroid of rank $d$ on $[n]$ by:
a Grassmann necklace, a decorated permutation, 
a $\Le$-diagram, and an equivalence
class of plabic graphs.
\end{remark}

\section{Positroid polytopes}\label{sec:polytopes}

The following geometric representation of a matroid will be
useful in our study of positroids.

\begin{definition} 
Given a matroid $M=([n],\B)$, the (basis) \emph{matroid polytope} $\Gamma_M$ of $M$ is the convex hull of the indicator vectors of the bases of~$M$:
\[
\Gamma_M := \convex\{e_B \mid B \in \B\} \subset \RR^n,
\]
where $e_B := \sum_{i \in B} e_i$, and $\{e_1, \dotsc, e_n\}$ is the standard basis of $\RR^n$.
\end{definition}

When we speak of ``a matroid polytope," we refer to the polytope of a specific matroid in its specific position in $\RR^n$. 

The following elegant characterization of matroid polytopes
is due to Gelfand,
Goresky, MacPherson, and Serganova.

\begin{theorem}[\cite{GGMS}]\label{r:GS} Let $\B$ be a collection of subsets of $[n]$ and let 
$\Gamma_{\B}:=\convex\{e_B \mid B \in \B\} \subset \RR^n$. Then $\B$ is the collection of bases of a matroid if and only if every edge of  $\Gamma_\B$ is a parallel translate of $e_i-e_j$ for some $i,j \in [n]$. 
\end{theorem}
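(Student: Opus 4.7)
The plan is to prove each direction separately. For the forward direction, suppose $\B$ is the set of bases of a matroid and let $[e_{B_1}, e_{B_2}]$ be an edge of $\Gamma_\B$; I aim to show $|B_1 \setminus B_2| = 1$. The key tool is the symmetric basis exchange property, a standard strengthening of the basis exchange axiom: for any $b_1 \in B_1 \setminus B_2$ there exists $b_2 \in B_2 \setminus B_1$ such that $B_1' := B_1 - b_1 + b_2$ and $B_2' := B_2 - b_2 + b_1$ both lie in $\B$. Since $e_{B_1'} + e_{B_2'} = e_{B_1} + e_{B_2}$, the midpoint of our edge is also the midpoint of $[e_{B_1'}, e_{B_2'}]$, and any supporting hyperplane cutting out the one-dimensional face $[e_{B_1}, e_{B_2}]$ must contain the whole segment $[e_{B_1'}, e_{B_2'}]$. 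This forces $\{B_1', B_2'\} \subseteq \{B_1, B_2\}$; since $B_1' \ne B_1$, we conclude $B_2 = B_1 - b_1 + b_2$, so the edge direction is $e_{b_2} - e_{b_1}$.

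For the converse, assume every edge of $\Gamma_\B$ has direction $e_i - e_j$. Equicardinality of $\B$ is immediate: the 1-skeleton of $\Gamma_\B$ is connected and each edge direction is orthogonal to $(1, \dots, 1)$, so all vertices have the same coordinate sum. The substantial task is to verify the basis exchange axiom: given $B_1, B_2 \in \B$ and $b_1 \in B_1 \setminus B_2$, produce $b_2 \in B_2 \setminus B_1$ with $B_1 - b_1 + b_2 \in \B$. My plan is to work in the tangent cone $T$ of $\Gamma_\B$ at the vertex $e_{B_1}$. Standard polytope theory says $T$ is a pointed polyhedral cone equal to the conic hull of the edge directions at $e_{B_1}$; by hypothesis, each such direction has the form $e_a - e_c$ with $c \in B_1$, $a \notin B_1$, and $B_1 - c + a \in \B$.

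Since $e_{B_2} \in \Gamma_\B$, the vector $v := e_{B_2} - e_{B_1}$ lies in $T$ and therefore admits a representation $v = \sum_k \lambda_k (e_{a_k} - e_{c_k})$ with all $\lambda_k \ge 0$. Two coordinate inspections then finish the job. The $b_1$-coordinate of $v$ equals $-1$, and on the right-hand side only terms with $c_k = b_1$ contribute (since $a_k \notin B_1$ forces $a_k \ne b_1$); hence some index $k_0$ satisfies $c_{k_0} = b_1$ and $\lambda_{k_0} > 0$. For any $a \notin B_1 \cup B_2$, the $a$-coordinate of $v$ is $0$ while the right-hand side has only nonnegative contributions from terms with $a_k = a$, forcing every such $\lambda_k$ to vanish. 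Thus every index with positive coefficient has $a_k \in B_2 \setminus B_1$; in particular $b_2 := a_{k_0}$ lies in $B_2 \setminus B_1$ and $B_1 - b_1 + b_2 \in \B$, as needed.

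The main obstacle I anticipate is simply justifying the tangent-cone decomposition: writing $v$ as a nonnegative combination of edge directions relies on the standard but nontrivial polyhedral fact that the tangent cone at a vertex of a polytope is the conic hull of its incident edge directions. Once that is in hand, the rest of the converse is coordinate bookkeeping.
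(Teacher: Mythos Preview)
Your argument is correct in both directions. The forward direction via symmetric basis exchange and the midpoint trick is standard and clean; the converse via the tangent cone at $e_{B_1}$ is also sound, and your coordinate analysis correctly extracts an exchange element $b_2 \in B_2 \setminus B_1$ from any nonnegative edge-direction representation of $e_{B_2} - e_{B_1}$. The one polyhedral fact you flag---that the tangent cone at a vertex is the conic hull of the incident edge directions---is indeed standard (it follows from the face correspondence between $P$ and its tangent cone at a vertex, together with the fact that a pointed polyhedral cone is generated by its extreme rays).

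As for comparison with the paper: the paper does not prove this theorem at all. It is stated as a result of Gelfand, Goresky, MacPherson, and Serganova, cited from \cite{GGMS}, and used as a black box. So there is no in-paper proof to compare your approach against; your write-up stands on its own as a complete and correct argument.
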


When the conditions of Theorem \ref{r:GS} are satisfied, the edges of $\Gamma_{\B}$ correspond exactly to the basis exchanges; that is, to the pairs of distinct bases $B_1, B_2$ such that $B_2 = B_1 - \{i\} \cup \{j\}$ for some $i,j \in [n]$. Two such bases are called \emph{adjacent bases}.

The following result is a restatement of the greedy algorithm for matroids.

\begin{proposition}\label{prop:face}\cite[Prop. 2]{AK}
Let $M$ be a matroid on $[n]$.  
Then any face of the matroid polytope
$\Gamma_M$ is itself a matroid polytope. 
More specifically, let $w: \RR^n \to \R$ be a linear functional.
Let $w_i = w(e_i)$; note that by linearity, these values determine $w$.
Consider the flag of sets $\emptyset = A_0 \subsetneq A_1 \subsetneq \cdots \subsetneq A_k = [n]$ 
such that $w_a=w_b$ for $a,b \in A_i - A_{i-1}$, 
and $w_a < w_b$ for $a \in A_i-A_{i-1}$ and $b \in A_{i+1}-A_i$.
Then the face of $\Gamma_M$ minimizing the linear functional $w$ is the matroid polytope of the matroid 
\[
\bigoplus_{i=1}^{k} (M|A_i)/{A_{i-1}}.
\]
\end{proposition}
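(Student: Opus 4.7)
The plan is to apply the greedy algorithm for matroids. On each level set $A_i - A_{i-1}$ the functional $w$ takes a constant value, say $c_i$, with $c_1 < c_2 < \cdots < c_k$. Writing $f_i(B) := |B \cap A_i|$ for a basis $B$ and using Abel summation,
\[
w(e_B) = \sum_{i=1}^k c_i \bigl(f_i(B) - f_{i-1}(B)\bigr) = c_k\, r(M) \;-\; \sum_{i=1}^{k-1}(c_{i+1} - c_i)\, f_i(B),
\]
so minimizing $w$ over $\Gamma_M$ is equivalent to simultaneously maximizing every $f_i(B)$, since each coefficient $c_{i+1} - c_i$ is strictly positive.

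Next I would show that the simultaneous maximum is achievable and equals $r_M(A_i)$: pick a basis $F_1$ of $M|A_1$, extend it to a basis $F_2$ of $M|A_2$, and iterate to obtain a basis $F_k$ of $M$ with $|F_k \cap A_i| = r_M(A_i)$ for every $i$. Consequently the face $F_w$ of $\Gamma_M$ minimizing $w$ is exactly
\[
F_w = \convex\bigl\{e_B : B \in \B(M),\ |B \cap A_i| = r_M(A_i) \text{ for all } i\bigr\}.
\]

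The third step is to identify the bases $B$ above with the bases of $N := \bigoplus_{i=1}^k (M|A_i)/A_{i-1}$. Given such a $B$, set $B_i := B \cap (A_i - A_{i-1})$. Since $B \cap A_i$ is a basis of $M|A_i$ whose intersection with $A_{i-1}$ has the maximal possible size $r_M(A_{i-1})$, the definition of contraction from Section \ref{sec:prelim} shows that $B_i$ is a basis of $(M|A_i)/A_{i-1}$, and hence $B_1 \sqcup \cdots \sqcup B_k$ is a basis of $N$. Conversely, given a basis $B_1 \sqcup \cdots \sqcup B_k$ of $N$, an induction on $i$ shows that $B_1 \sqcup \cdots \sqcup B_i$ is a basis of $M|A_i$: the inductive step uses the standard fact that if $I$ is a basis of $M|A_{i-1}$ and $J$ is a basis of $(M|A_i)/A_{i-1}$, then $I \sqcup J$ is a basis of $M|A_i$. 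Since $e_B = \sum_i e_{B_i}$, this bijection on bases is the restriction of an affine map of ambient spaces, yielding $F_w = \Gamma_N$.

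The main obstacle will be bridging the paper's definition of contraction (via bases of $M$ with maximal $|B \cap T|$) with the cross-layer basis combinations used in the induction. I would handle this by first establishing a short lemma equating the paper's definition with the usual rank-based characterization: $J \subseteq E - T$ is a basis of $M/T$ if and only if $J \sqcup I$ is a basis of $M$ for every basis $I$ of $M|T$. This equivalence is a basis-exchange argument inside $M|T$, and once it is in hand the remainder of the proof reduces to combinatorial bookkeeping.
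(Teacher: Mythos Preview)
The paper does not actually prove this proposition: it is stated with the citation \cite[Prop.~2]{AK} and prefaced by the remark that it ``is a restatement of the greedy algorithm for matroids,'' but no proof is given. Your argument is correct and is precisely the greedy-algorithm proof the paper alludes to: the Abel-summation step reduces the minimization to simultaneously maximizing all the $f_i(B)=|B\cap A_i|$, the greedy extension shows this simultaneous maximum is attainable with value $r_M(A_i)$, and the identification of the minimizing bases with $\B\bigl(\bigoplus_i (M|A_i)/A_{i-1}\bigr)$ is the standard layer-by-layer decomposition. One small simplification: since each $(M|A_i)/A_{i-1}$ has ground set $A_i-A_{i-1}$, the direct sum $N$ already lives on $[n]$, so the ``affine map of ambient spaces'' is just the identity and $F_w=\Gamma_N$ literally as subsets of $\RR^n$. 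Also, in your closing lemma it suffices (and is slightly cleaner) to use the ``for some basis $I$ of $M|T$'' version rather than ``for every''; the latter is true but stronger than what the induction requires.
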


We now study inequality descriptions of positroid polytopes.

\begin{proposition}[\cite{welsh}]\label{r:inequalitiesmatroids}
Let $M = ([n], \B)$ be any matroid of rank $d$, and let $r_M:2^{[n]} \to \ZZ_{\geq 0}$ be its rank function. Then the matroid polytope $\Gamma_M$ can be described as
\[
\Gamma_M = \left\{ {\bf x} \in \RR^n \bigmid \sum_{i \in [n]} x_i = d, \, \sum_{i \in A} x_i \leq r_M(A) \, \text{ for all $A \subset [n]$} \right\}.
\]
\end{proposition}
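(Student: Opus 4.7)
The plan is to prove set equality by comparing support functions. Write $P$ for the right-hand polytope. The inclusion $\Gamma_M \subseteq P$ is immediate: each basis indicator $e_B$ satisfies $\sum_i (e_B)_i = |B| = d$, and $\sum_{i \in A} (e_B)_i = |B \cap A| \leq r_M(A)$ because $B \cap A$ is independent in $M$. Both conditions are preserved under convex combinations, so $\Gamma_M \subseteq P$.

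For the reverse inclusion I would show that for every linear functional $w \in \RR^n$, the maximum of $w$ over $P$ is attained at some $e_B$ with $B \in \B$. Combined with $\Gamma_M \subseteq P$ and the compactness of both polytopes, this equality of support functions forces $P = \Gamma_M$. To produce $B$, fix $w$, relabel so that $w_1 \geq w_2 \geq \cdots \geq w_n$, and set $A_j := \{1, \dotsc, j\}$. Abel summation rewrites
\[
w \cdot x = \sum_{j=1}^{n-1} (w_j - w_{j+1}) \sum_{k \in A_j} x_k \;+\; w_n \sum_{k=1}^n x_k.
\]
Each coefficient $w_j - w_{j+1}$ is nonnegative, so the inequalities defining $P$, together with $\sum_k x_k = d$, yield
\[
w \cdot x \;\leq\; \sum_{j=1}^{n-1} (w_j - w_{j+1})\, r_M(A_j) \;+\; w_n\, d \qquad \text{for all } x \in P.
\]

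Now I would invoke the greedy algorithm: scan $j = 1, 2, \dotsc, n$ and adjoin $j$ to the running set whenever this strictly increases the rank. A standard consequence of the basis exchange axiom is that the resulting $B$ is a basis of $M$ and satisfies $|B \cap A_j| = r_M(A_j)$ for every $j$. Substituting $x = e_B$ into the Abel identity makes every inequality above tight, so $w \cdot e_B$ matches the upper bound and $e_B$ maximizes $w$ over $P$. The main obstacle is this greedy step: from submodularity of $r_M$ alone one would still obtain the displayed upper bound, but only the basis exchange axiom ensures that the maximizer is an actual $0/1$ vector corresponding to a basis. Everything else is linear algebra and convex geometry.
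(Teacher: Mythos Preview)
Your proof is correct and is essentially the classical Edmonds argument for the matroid polytope via the greedy algorithm. Note, however, that the paper does not actually supply a proof of this proposition: it is stated with a citation to \cite{welsh} and then used as a black box. So there is no ``paper's own proof'' to compare against here; you have filled in a standard proof where the paper simply invoked the literature.

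One small remark: you assert compactness of $P$ in passing. This does follow from the constraints (the singleton inequalities give $x_i \le r_M(\{i\}) \le 1$, and the equality $\sum_i x_i = d$ together with $\sum_{i \ne j} x_i \le r_M([n]\setminus\{j\}) \le d$ gives $x_j \ge 0$), but since your support-function argument already shows that $\max_{x \in P} w \cdot x$ is finite for every $w$, boundedness of $P$ is in fact a consequence of your computation rather than an extra hypothesis.
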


Proposition \ref{r:inequalitiesmatroids} describes a general matroid polytope using the $2^n$ inequalities arising from the rank of all the subsets of its ground set. For positroid polytopes, however, there is a much shorter description, which we learned from Alex Postnikov \cite{postnikovpersonal}, and will appear in an upcoming preprint with Thomas Lam \cite{LP}. 

\begin{proposition}\label{r:inequalities}
Let $\I = (I_1, I_2, \dotsc, I_n)$ be a Grassmann necklace of type $(d,n)$, and let $M = \M(\I)$ be its corresponding positroid. For any $j \in [n]$, suppose the elements of $I_j$ are $a^j_1 <_j a^j_2 <_j \dotsb <_j a^j_d$. Then the matroid polytope $\Gamma_M$ can be described by the inequalities
\begin{align}
 x_1 + x_2 + \dotsb + x_n \, &= \, d, \label{rankeq} \\
 x_j \, &\geq \, 0 &\quad \text{ for all $j \in [n]$}, \label{0ineq}\\
 x_j + x_{j+1} + \dotsb + x_{a^j_k-1} \, &\leq \, k-1 &\quad \text{ for all $j \in [n]$ and $k \in [d]$} \label{Galeineq},
\end{align}
where all the subindices are taken modulo $n$.
\end{proposition}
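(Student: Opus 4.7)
Writing $P$ for the polytope defined by the inequalities \eqref{rankeq}, \eqref{0ineq}, \eqref{Galeineq}, the plan is to prove $\Gamma_M = P$ by establishing the two inclusions separately. For $\Gamma_M \subseteq P$, I would take any basis $B \in \B(M)$ and verify that $e_B$ satisfies each inequality. The conditions \eqref{rankeq} and \eqref{0ineq} are immediate from $|B| = d$ and $B \subseteq [n]$. For \eqref{Galeineq}, I would appeal to the Gale-order characterization of positroid bases in Theorem \ref{r:corres}: writing $B = \{b^j_1 <_j \cdots <_j b^j_d\}$, the relation $B \geq_j I_j$ yields $b^j_k \geq_j a^j_k$ for every $k$. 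Consequently $b^j_k, b^j_{k+1}, \ldots, b^j_d$ all lie weakly above $a^j_k$ in the $<_j$ order, so at most $k-1$ elements of $B$ can lie in the cyclic interval $[j, a^j_k - 1]$, which is exactly \eqref{Galeineq} evaluated at $e_B$.

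For the reverse inclusion $P \subseteq \Gamma_M$, Proposition \ref{r:inequalitiesmatroids} reduces the problem to showing every $x \in P$ satisfies $\sum_{i \in A} x_i \leq r_M(A)$ for every $A \subseteq [n]$. The cyclic-interval case $A = [j, m]$ would follow from the standard matroid greedy-algorithm observation that $I_j$ is obtained by scanning $[n]$ in the $<_j$ order and accepting each independent element, so that $r_M([j, m]) = |I_j \cap [j, m]|$. Letting $k$ be the smallest index with $a^j_k \notin [j, m]$ in the $<_j$ order, so $r_M([j, m]) = k - 1$ and $[j, m] \subseteq [j, a^j_k - 1]$, one obtains
\[
\sum_{i \in [j, m]} x_i \;\leq\; \sum_{i \in [j, a^j_k - 1]} x_i \;\leq\; k - 1 \;=\; r_M([j, m])
\]
from \eqref{0ineq} and \eqref{Galeineq}. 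The edge case $I_j \subseteq [j, m]$ gives $r_M([j, m]) = d$ and is handled directly by \eqref{rankeq} and \eqref{0ineq}.

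The main obstacle is extending this rank inequality from cyclic intervals to an arbitrary subset $A \subseteq [n]$. One route is to decompose both $A$ and $[n] - A$ into their maximal cyclic intervals, apply the cyclic-interval bounds to each, and combine them using the equality \eqref{rankeq}; this would exploit the special way in which positroid rank functions interact with cyclic-interval decompositions. A cleaner alternative is to prove directly that every vertex of $P$ is a $0/1$ vector, after which the Gale-order argument of the first inclusion, run backwards, identifies any such vertex with $e_B$ for some $B \in \B(M)$. For integrality one would exploit that, for each fixed $j$, the constraints \eqref{Galeineq} form a chain of nested cyclic intervals, and analyze the $n$ tight constraints at a candidate vertex. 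The delicate step is the integrality proof itself: since the constraint matrix mixes nested chains based at different $j$, no naive total-unimodularity argument is available, and some positroid-specific structural input is likely needed.
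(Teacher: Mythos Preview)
Your forward inclusion $\Gamma_M \subseteq P$ is fine and matches the paper. The gap is in the reverse inclusion: you correctly identify that proving $P$ has only $0/1$ vertices would finish the argument, but you then dismiss total unimodularity, writing that ``no naive total-unimodularity argument is available'' because the constraint matrix mixes chains based at different $j$. This is precisely the step the paper resolves, and by a total-unimodularity argument.

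The missing idea is a change of coordinates. Set $y_i = x_1 + \cdots + x_i$ for $1 \leq i \leq n-1$ (and treat $y_0 = 0$, $y_n = d$ as constants). Every constraint in \eqref{0ineq} and \eqref{Galeineq} is a bound on a sum of $x$'s over a cyclic interval, and any such sum equals $y_b - y_a$ (possibly shifted by the constant $d$ when the interval wraps). Thus in the $y$-coordinates every inequality has the form $y_i - y_j \leq c_{ij}$ with integer right-hand side. The coefficient matrix, whose rows are of the form $e_i - e_j$, is the incidence matrix of a directed graph and is classically totally unimodular. Hence every vertex of $P$ has integer $y$-coordinates, and therefore integer $x$-coordinates; combining $x_j \geq 0$ with the $k=2$ case of \eqref{Galeineq} forces each $x_j \in \{0,1\}$. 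Once you know $P$ is a $0/1$ polytope, your own Gale-order argument (run in both directions) identifies its vertices with $\{e_B : B \in \B(\I)\}$, and you are done. So the ``delicate step'' you flagged is in fact handled by a one-line coordinate change; no positroid-specific structural input beyond the cyclic-interval form of the inequalities is needed.
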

In Proposition \ref{r:inequalities}, 
when we refer to taking some number $i$ modulo $n$, we mean taking its representative modulo $n$ in the set $\{1,\dotsc,n\}$.

\begin{proof}
Let $P$ be the polytope described by (\ref{rankeq}), (\ref{0ineq}), and (\ref{Galeineq}). First we claim that the vertices of $P$ are $0/1$ vectors. To see this, rewrite the polytope in terms of the ``$y$-coordinates" given by $y_i=x_1+\cdots+x_i$ for $1 \leq i \leq n-1$. The inequalities of $P$ are of the form $y_i-y_j \leq a_{ij}$ for integers $a_{ij}$. Since the matrix whose row vectors are $e_i-e_j$ is totally unimodular \cite{Schrijver}, the vertices of $P$ have integer $y$-coordinates, and hence also integer $x$-coordinates. The inequalities (\ref{0ineq}) and (\ref{Galeineq}) (for $k=2$) imply that the $x$-coordinates of any vertex are all equal to $0$ or $1$.

Since $P$ and $\Gamma_M$ are both $0/1$ polytopes, it suffices to show that they have the same vertices. But for a $0/1$ vector $e_B$ satisfying (\ref{rankeq}), the inequalities (\ref{Galeineq}) are equivalent to $B \geq_j I_j$ for all $j$, \emph{i.e.}, to $B \in \B(\I)$, as desired. 
\end{proof}

\begin{proposition}\label{prop:facets}
A matroid $M$ of rank $d$ on $[n]$ is a positroid if and only if its matroid polytope $\Gamma_M$ can be described by the equality $x_1+ \dotsb + x_n = d$ and inequalities of the form 
\[\sum_{\ell \in [i,j]} x_\ell \leq a_{ij}, \, \text{ with }i,j \in [n].\]
\end{proposition}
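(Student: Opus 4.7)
The forward direction is essentially a restatement of Proposition \ref{r:inequalities}. Its Gale-type inequalities (\ref{Galeineq}) are already cyclic-interval inequalities, and each non-negativity constraint $x_j\geq 0$ can be rewritten using the rank equation as $\sum_{\ell\in[j+1,j-1]}x_\ell\leq d$, which is also of the desired form.

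For the backward direction, suppose $\Gamma_M$ is cut out by $\sum_i x_i=d$ together with inequalities $\sum_{\ell\in[i,j]}x_\ell\leq a_{ij}$. Since every basis indicator $e_B\in\Gamma_M$ satisfies $\sum_{\ell\in[i,j]}(e_B)_\ell=|B\cap[i,j]|\leq r_M([i,j])$, one checks that $a_{ij}\geq r_M([i,j])$ and that replacing each $a_{ij}$ by $r_M([i,j])$ leaves the polytope unchanged; so I may assume $a_{ij}=r_M([i,j])$. I then introduce the positroid $M':=\M(\I(M))$ built from the Grassmann necklace of $M$, and aim to show $\Gamma_M=\Gamma_{M'}$, which forces $M=M'$ and completes the proof. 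The containment $\Gamma_M\subseteq\Gamma_{M'}$ is immediate from Proposition \ref{cor:contain}.

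The reverse containment is the heart of the argument. Here I would invoke the following greedy observation, valid for any matroid $N$ on $[n]$: if $I_i^N=\{a^i_1<_i\cdots<_i a^i_d\}$ denotes the lex-min basis of $N$ with respect to $<_i$, then $r_N([i,m])=|I_i^N\cap[i,m]|$ for every cyclic interval $[i,m]$. This holds because running the matroid greedy algorithm on $[i,m]$ in the $<_i$ order produces exactly $I_i^N\cap[i,m]$: at each step the already-chosen set coincides with what is generated by running greedy on all of $[n]$. Applied to both $M$ and $M'$---whose Grassmann necklaces coincide by Theorem \ref{r:corres}---this gives $r_{M'}([i,m])=r_M([i,m])$ on every cyclic interval. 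Since $\Gamma_{M'}$ automatically satisfies $\sum_{\ell\in A}x_\ell\leq r_{M'}(A)$ for every subset $A\subseteq[n]$ (a general matroid-polytope fact, Proposition \ref{r:inequalitiesmatroids}), it satisfies every cyclic-interval inequality cutting out $\Gamma_M$, yielding $\Gamma_{M'}\subseteq\Gamma_M$ as desired.

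The main obstacle is the greedy observation. Although elementary, it is the load-bearing step that transports the cyclic-interval data hypothesized about $\Gamma_M$ into Grassmann-necklace information about $M$, where the earlier results of the section can be deployed to recognize $M$ as a positroid.
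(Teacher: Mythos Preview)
Your proof is correct and follows essentially the same route as the paper: both directions reduce to comparing $M$ with $M':=\M(\I(M))$, and the key step in showing $\Gamma_{M'}\subseteq\Gamma_M$ is exactly your ``greedy observation'' that $r_M([i,j])=|I_i\cap[i,j]|$. The only cosmetic difference is that the paper, rather than invoking Proposition~\ref{r:inequalitiesmatroids} for $M'$ and the greedy observation for both matroids, uses the explicit Gale-type inequalities of Proposition~\ref{r:inequalities} for $M'$ together with nonnegativity to bound $|B\cap[i,j]|\leq |I_i\cap[i,j]|$ directly, and then applies the greedy observation only to $M$ to identify that bound with $r_M([i,j])$.
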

\begin{proof}
It follows from Proposition \ref{r:inequalities} that all positroid polytopes have the desired form (note that $x_j \geq 0$ is equivalent to $x_{j+1} + \dotsb + x_{j-1} \leq d$). To prove the converse, assume $M$ is a rank $d$ matroid on the set $[n]$ whose polytope $\Gamma_M$ admits a description as above.
Let $r_{ij} = r_M([i,j])$ be 
the rank in $M$ of the cyclic interval $[i,j]$. If $\Gamma_M$ satisfies an inequality $\sum_{\ell \in [i,j]} x_\ell \leq a_{ij}$ then $r_{ij} \leq a_{ij}$.
The polytope $\Gamma_M$ is then described by the inequalities
\begin{align*}
 x_1 + x_2 + \dotsb + x_n \, &= \, d,\\
 x_i + x_{i+1} + \dotsb + x_j \, &\leq \, r_{ij} & \text{for all $i,j \in [n]$}.
\end{align*}

Let $\I := \I(M) = (I_1, I_2, \dotsc, I_n)$ be the Grassmann necklace associated to $M$, and let $M' := \M(\I(M))$ be its corresponding positroid. We will show that $M=M'$. 

By Proposition \ref{cor:contain}, we know that every basis of $M$ is also a basis of $M'$. Now, suppose that $B$ is basis of $M'$, and consider any cyclic interval $[i,j]$. Denote $I_i =: \{ a_1 <_i a_2 <_i \dotsb <_i a_d\}$, and let $k = |I_i \cap [i,j]|$. Then $i \leq j \leq a_{k+1}-1$ in cyclic order. Combining this with Proposition \ref{r:inequalities}, we see that the vertex $e_B$ of $\Gamma_{M'}$ satisfies the inequality
\[
x_i + x_{i+1} + \dotsb + x_j \leq x_i + x_{i+1} + \dotsb + x_{a_{k+1}-1} \leq k.
\]
(with the convention that $a_{d+1} = i$.) Moreover, the definitions of $\I_i$ and $k$ imply that $k = r_{ij}$, showing that $e_B$ satisfies all the inequalities that describe $\Gamma_M$. It follows that $B$ is also a basis of $M$, as desired.
\end{proof}

It follows that positroid polytopes are closed under taking faces.

\begin{corollary}\label{cor:faceofpositroid}
Every face of a positroid polytope is a positroid polytope.
\end{corollary}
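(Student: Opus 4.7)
The plan is to combine Proposition \ref{prop:face} with the characterization in Proposition \ref{prop:facets}. Given a face $F$ of a positroid polytope $\Gamma_M$, Proposition \ref{prop:face} immediately yields that $F$ is itself a matroid polytope $\Gamma_N$ for some matroid $N$; moreover, $N$ has rank $d = r(M)$ because every point of $\Gamma_M$ lies on the hyperplane $x_1 + \cdots + x_n = d$. So it remains to show that $N$ is a positroid.

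To do this, I would invoke the ``only if'' direction of Proposition \ref{prop:facets}: produce a description of $F = \Gamma_N$ using the rank equality together with cyclic-interval inequalities $\sum_{\ell \in [i,j]} x_\ell \leq a_{ij}$. By Proposition \ref{r:inequalities}, $\Gamma_M$ already admits such a description, and the face $F$ is obtained by turning some subset $S$ of these inequalities into equalities while keeping the rest as $\leq$.

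The key observation is that each such equality can be rewritten as a pair of cyclic-interval inequalities. Indeed, using the rank equality $\sum_\ell x_\ell = d$, the equation $\sum_{\ell \in [i,j]} x_\ell = a_{ij}$ is equivalent to the conjunction of $\sum_{\ell \in [i,j]} x_\ell \leq a_{ij}$ and $\sum_{\ell \in [j+1, i-1]} x_\ell \leq d - a_{ij}$, where $[j+1, i-1]$ denotes the complementary cyclic interval. Substituting these replacements throughout $S$ yields a description of $F$ of precisely the form demanded by Proposition \ref{prop:facets}, and hence $N$ is a positroid.

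There is no real obstacle here: the heavy lifting was done by Propositions \ref{prop:face}, \ref{r:inequalities}, and \ref{prop:facets}, and the corollary reduces to the bookkeeping fact that the complement of a cyclic interval in $[n]$ is again a cyclic interval. This closure property fails for arbitrary subsets, which is precisely why the cyclic-interval characterization of positroid polytopes in Proposition \ref{prop:facets} is the right formulation for this argument.
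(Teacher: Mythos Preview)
Your proposal is correct and follows essentially the same route as the paper: describe $\Gamma_M$ by cyclic-interval inequalities, obtain any face by turning some of them into equalities, and rewrite each equality $\sum_{\ell\in[i,j]} x_\ell = a_{ij}$ as the complementary inequality $\sum_{\ell\in[j+1,i-1]} x_\ell \leq d - a_{ij}$, then invoke Proposition~\ref{prop:facets}. The only difference is cosmetic: you explicitly cite Proposition~\ref{prop:face} to ensure the face is a matroid polytope before applying Proposition~\ref{prop:facets}, a step the paper leaves implicit.
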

\begin{proof}
Assume $\Gamma_M$ is a positroid polytope, and fix a description of it by inequalities as in Proposition \ref{prop:facets}. Any face of $\Gamma_M$ is then obtained by intersecting $\Gamma_M$ with hyperplanes of the form 
$\sum_{\ell \in [i,j]} x_\ell = a_{ij}$. But this is equivalent to intersecting it with the halfspaces $\sum_{\ell \in [j+1,i-1]} x_\ell \leq d - a_{ij}$, so the result follows by Proposition \ref{prop:facets}.
\end{proof}

\section{Matroidal properties of positroids from plabic graphs}\label{sec:plabic}

As shown in \cite[Section 11]{postnikov}, 
every perfectly orientable plabic graph gives rise to a positroid as follows.
\begin{proposition}\label{prop:perf}
Let $G$ be a plabic graph of type $(d,n)$.
Then we have a positroid 
$M_G$ on $[n]$ whose bases are precisely
\[\{I_{\O} \mid \O \text{ is a perfect orientation of }G\},\]
where $I_\O$ is the set of sources of $\O$.
\end{proposition}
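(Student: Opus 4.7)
The plan is to follow Postnikov's boundary measurement construction: build an explicit totally nonnegative matrix $A$ from $G$ whose nonzero maximal minors are indexed exactly by source sets of perfect orientations of $G$. Since $A$ is totally nonnegative, $M(A)$ is a positroid by Definition \ref{def:positroid}, and the claim that $\mathcal{B}(M_G)$ equals $\{I_\O\}$ then reduces to identifying which Plücker coordinates of $A$ are nonzero.

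First, I would fix a perfect orientation $\O_0$ of $G$ with source set $I_0$, and assign a positive weight $x_e > 0$ to each edge of $G$. For each $i \in I_0$ and $j \notin I_0$, I would form the sum of weights $\sum_P \operatorname{sign}(P) \prod_{e \in P} x_e$ over directed paths $P$ from $b_i$ to $b_j$ in $\O_0$, where $\operatorname{sign}(P)$ is the usual Postnikov sign depending on the number of crossings of $P$ with itself and the positions of $i,j$ around the boundary. These entries, together with an identity block in the columns indexed by $I_0$, assemble into a $d \times n$ matrix $A = A(G,\O_0,\{x_e\})$.

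Next I would apply the Lindström–Gessel–Viennot lemma to expand every maximal minor $\Delta_I(A)$ as a signed sum over families of pairwise vertex-disjoint directed paths in $\O_0$ going from $I_0 - I$ to $I - I_0$. The critical input is that, thanks to the bi-coloring of the internal vertices of $G$ and the type formula $d - (n-d) = \sum \operatorname{color}(v)(\deg(v)-2)$, the signs coming out of LGV cancel against the signs in the definition of the entries of $A$, so that every non-vanishing term of $\Delta_I(A)$ is a positive monomial in the $x_e$. In particular $A$ is totally nonnegative and $M(A)$ is a positroid on $[n]$ of rank $d$.

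The core combinatorial step is to establish the equivalence: for generic positive weights, $\Delta_I(A) > 0$ if and only if $I$ is the source set of some perfect orientation of $G$. For the ``if'' direction, given a perfect orientation $\O$ with source set $I$, I would show that $\O$ and $\O_0$ differ by reversing a family of vertex-disjoint directed paths going from $I_0 - I$ to $I - I_0$ (plus possibly some reversed closed cycles, which do not change source sets); this family contributes a positive monomial to $\Delta_I(A)$. For the ``only if'' direction, I would take any family of non-intersecting paths contributing to $\Delta_I(A)$ and reverse each of its paths in $\O_0$ to obtain a new orientation; the local perfect-orientation condition at each internal vertex is preserved precisely because paths are vertex-disjoint and the color-based degree constraints are symmetric under arc reversal along a path, so the result is a perfect orientation of $G$ with source set $I$.

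The main obstacle is the bookkeeping of signs in the LGV step: one must pin down conventions (path sign, crossing parity, the ordering of sources and sinks around the boundary disk, and the orientation of the identity block in columns $I_0$) so that \emph{every} collection of non-intersecting paths contributes positively, rather than merely producing $\pm$ cancellations. This is exactly where the planarity of $G$, the bi-coloring of internal vertices, and the circular ordering of the boundary vertices enter the proof in an essential way; once the signs are settled, the remainder of the argument is a clean bijective correspondence between perfect orientations and non-intersecting path families.
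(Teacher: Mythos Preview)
The paper does not prove this proposition at all; it simply cites it as \cite[Section 11]{postnikov} and moves on.  Your sketch is, in fact, a reasonable outline of Postnikov's original argument via the boundary measurement map, so in that sense you are reconstructing the cited source rather than offering an alternative.

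One genuine technical gap to flag: you invoke the Lindstr\"om--Gessel--Viennot lemma to expand $\Delta_I(A)$ as a sum over vertex-disjoint path families, but LGV in its standard form requires the directed graph to be acyclic.  A perfect orientation $\O_0$ of a general plabic graph can contain directed cycles, so the sum $\sum_P \operatorname{sign}(P)\prod_e x_e$ defining a single matrix entry may already be an infinite series, and the minor expansion must allow \emph{flows} (disjoint unions of paths and closed cycles), not just path families.  This is precisely what Talaska's formula \cite{Talaska}, cited just after the proposition, provides; the paper in fact states the resulting basis criterion in terms of flows rather than non-intersecting paths.  Your bijective step---reversing a disjoint path family to pass between perfect orientations---is correct in spirit, but to make the positivity argument rigorous for arbitrary plabic graphs you must either restrict to an acyclic perfect orientation (e.g.\ the canonical one on $G(D)$ coming from a $\Le$-diagram, which the paper notes immediately after the proposition) or import Talaska's flow machinery.
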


Moreover, every positroid can be realized in this way, using the 
construction described in 
Definition \ref{def:Le-plabic}.
(One perfect orientation of $G(D)$ may be obtained by 
orienting each horizontal edge in Figure \ref{fig:local} 
west, each vertical edge south, and each 
diagonal edge southwest.)

There is another way to read off from $G$ the bases of $M_G$, which 
follows from  Proposition \ref{prop:perf} and 
\cite[Theorem 1.1]{Talaska}.  
To state this result, we need to define the notion of a \emph{flow} in $G$.
For $J$ a set of boundary vertices with
$|J|=|I_{\O}|$, a {flow from $I_{\O}$ to $J$} is a collection of self-avoiding walks
and self-avoiding 
cycles, all pairwise vertex-disjoint, such that the sources of these walks are $I_{\O} - (I_{\O} \cap J)$
and the destinations are $J - (I_{\O} \cap J)$.

\begin{proposition}
Let $G$ be a plabic graph of type $(d,n)$.  
Choose a perfect orientation $\O$ of $G$.
Then the bases of the positroid $M_G$ are precisely
$$\{I \mid \text{there exists a flow from }I_{\O} \text{ to }I\}.$$
\end{proposition}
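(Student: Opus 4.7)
The plan is to derive the claim by combining Proposition~\ref{prop:perf} with the flow formula for Pl\"ucker coordinates \cite[Theorem~1.1]{Talaska}. The idea is that Talaska's theorem expresses each Pl\"ucker coordinate of the boundary-measurement matrix as a positive-coefficient rational expression in flows, so for strictly positive edge weights its nonvanishing is equivalent to the existence of at least one flow; and nonvanishing of $\Delta_I$ is exactly what distinguishes bases of a representable matroid.

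More concretely, I would proceed as follows. First, assign strictly positive weights to the edges of $G$ and let $A$ be the $d\times n$ matrix produced by the boundary-measurement map associated to the perfect orientation $\O$. The construction underlying Proposition~\ref{prop:perf} (as in \cite[Section~11]{postnikov}) guarantees that $M(A)=M_G$, so $I\in\binom{[n]}{d}$ is a basis of $M_G$ if and only if $\Delta_I(A)\ne 0$. Second, apply Talaska's theorem: for every $I\in\binom{[n]}{d}$ one has
$$
\Delta_I(A)\;=\;\frac{\displaystyle\sum_{F\,:\,I_{\O}\to I}\mathrm{wt}(F)}{\displaystyle\sum_{\mathcal{C}}\mathrm{wt}(\mathcal{C})},
$$
where the numerator ranges over all flows from $I_{\O}$ to $I$ in $G_{\O}$ (in the sense defined above), and the denominator is a positive normalization coming from collections of pairwise vertex-disjoint self-avoiding cycles that includes the empty collection (so it has constant term $1$).

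Third, because all edge weights are strictly positive, each summand in the numerator is a positive monomial and no cancellation can occur; similarly the denominator is strictly positive. Hence
$$
\Delta_I(A)\ne 0\quad\Longleftrightarrow\quad\text{there exists a flow } F:I_{\O}\to I.
$$
Combining this with the equivalence from the first step gives the desired description of the bases of $M_G$.

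The main obstacle I anticipate is purely notational: one must check that the conventions we have adopted for flows, namely unions of vertex-disjoint self-avoiding walks from $I_{\O}\setminus I$ to $I\setminus I_{\O}$ together with vertex-disjoint self-avoiding cycles, agree with those in Talaska's formulation, so that the positivity argument applies verbatim. Once this dictionary is pinned down, the proof reduces to the two-line argument above, since positivity of edge weights rules out any cancellation between flow contributions.
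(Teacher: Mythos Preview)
Your proposal is correct and is essentially the same approach the paper takes: the paper does not give a detailed argument but simply states that the result ``follows from Proposition~\ref{prop:perf} and \cite[Theorem~1.1]{Talaska}.'' Your write-up is a faithful unpacking of that citation, using positive edge weights so that the Talaska flow formula for $\Delta_I(A)$ has no cancellation and hence is nonzero precisely when a flow exists.
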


Not only can we read off bases from the plabic graph, we can also read 
off basis exchanges.  The backwards direction  of Proposition \ref{prop:basis} below
was observed in 
\cite[Section 5]{PSW}.  

\begin{proposition}\label{prop:basis}
Consider a positroid $M$ which is encoded by the perfectly orientable plabic graph $G$.
Consider a perfect orientation $\O$ of $G$ and let $I = I_{\O}$.  Then there is a basis
exchange between $I$ and $J = I - \{i\} \cup \{j\}$ if and only if
there is a directed path $P$ in $\O$ from the boundary vertex $i$ to the boundary 
vertex $j$.
\end{proposition}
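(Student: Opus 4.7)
The plan is to prove both directions by constructing and analyzing perfect orientations, leaning on Proposition \ref{prop:perf}, which identifies the bases of $M_G$ with the source sets of perfect orientations.

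For the forward direction, suppose there is a directed path $P$ in $\O$ from $b_i$ to $b_j$. I would let $\O'$ be the orientation obtained from $\O$ by reversing every edge along $P$, and check that $\O'$ is again a perfect orientation by a local analysis at each internal vertex $v$ traversed by $P$. If $v$ is white, its unique $\O$-incoming edge must be the edge of $P$ along which the path enters $v$, and the path leaves $v$ along some $\O$-outgoing edge; after reversal, these two edges swap roles, so $v$ still has exactly one incoming edge. The case of a black internal vertex is symmetric. At the endpoints, the reversal turns the source $b_i$ into a sink and the sink $b_j$ into a source, while all other boundary vertices are unaffected. Hence $I_{\O'} = I - \{i\} \cup \{j\} = J$, so $J$ is a basis of $M$ by Proposition \ref{prop:perf}, giving the desired basis exchange.

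For the reverse direction, let $J = I - \{i\} \cup \{j\}$ be a basis, so by Proposition \ref{prop:perf} there is a perfect orientation $\O'$ with $I_{\O'} = J$. I would consider the subgraph $D \subseteq G$ consisting of the edges on which $\O$ and $\O'$ disagree. The key observation is that every vertex of $G$ has degree $0$ or $2$ in $D$, except for $b_i$ and $b_j$, which have degree exactly $1$. Indeed, at an internal white vertex $v$ there is exactly one edge directed into $v$ in each of $\O$ and $\O'$: if these coincide then $v$ contributes no edges to $D$, and otherwise exactly those two edges at $v$ lie in $D$. A symmetric argument handles black internal vertices, while at boundary vertices $b_k$ with $k \neq i,j$ the source/sink status agrees between $\O$ and $\O'$. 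It follows that $D$ decomposes as a disjoint union of cycles together with a single path from $b_i$ to $b_j$.

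Finally, I would argue that this path component of $D$ is a directed path in $\O$. The case analysis above shows more precisely that at every internal vertex $v$ lying on this path, the two edges of $D$ incident to $v$ consist, under the orientation $\O$, of exactly one edge directed into $v$ and one directed out of $v$. Starting at $b_i$, whose incident edge is $\O$-directed outward, and traversing the path, every arrival at an internal vertex is along an $\O$-incoming edge and every departure is along an $\O$-outgoing edge. Thus the traversal respects $\O$ throughout and terminates at $b_j$, producing the desired $\O$-directed path from $b_i$ to $b_j$. The main obstacle is the local analysis at internal vertices: one must make sure not only that $D$ has degree $2$ at such vertices but also that, under $\O$, these two edges split as one incoming and one outgoing, which is what allows direction to propagate along the path.
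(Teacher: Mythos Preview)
Your proof is correct and follows essentially the same approach as the paper's. The paper is terser: for the implication ``path $\Rightarrow$ basis exchange'' it simply asserts that reversing $P$ yields a perfect orientation, and for ``basis exchange $\Rightarrow$ path'' it cites \cite[Lemma 4.5]{PSW} for the degree-$2$ structure of the difference subgraph and then states without further detail that $P$ must be directed in $\O$. Your local analysis at white and black vertices supplies exactly the content of that citation and the omitted verification, so your argument is a self-contained version of the same proof.
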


\begin{proof}
Suppose that $P$ is a directed
path in $\O$ from  $i$ to $j$.
If we modify $\O$ by reversing all edges along $P$ we obtain another 
perfect orientation $\O'$, whose source set is $J = I - \{i\} \cup \{j\}$.
It follows from Proposition \ref{prop:perf} that $J$ is also a basis of $M$
and hence there is a basis exchange between $I$ and $J$ that swaps $i$ and $j$.

Conversely, let us suppose that there is a basis exchange between $I$ and $J = I - \{i\} \cup \{j\}$.
Then by Proposition \ref{prop:perf} there is a perfect orientation $\O'$
of $G$ such that $I_{\O'} = J$.  Comparing $\O'$ to $\O$, it is clear that the 
set of edges where the perfect orientations differ is a subgraph $H'$ of $G$ such that 
all vertices have degree $2$ (except possibly at the boundary), see e.g. 
\cite[Lemma 4.5]{PSW}.  
More specifically, $H$ is a disjoint union of some closed cycles $C_1, \dots, C_l$ 
together with a path $P$
between vertices $i$ and $j$, and $\O'$ is obtained from $\O$ by reversing all edges in $H$.
It follows from the definition of perfect orientation that $P$ must be a directed path
in both $\O$ and $\O'$.
\end{proof}

 \begin{figure}[ht]
  \centering
  \subfloat[][A perfect orientation.]{
   \includegraphics[scale=0.9]{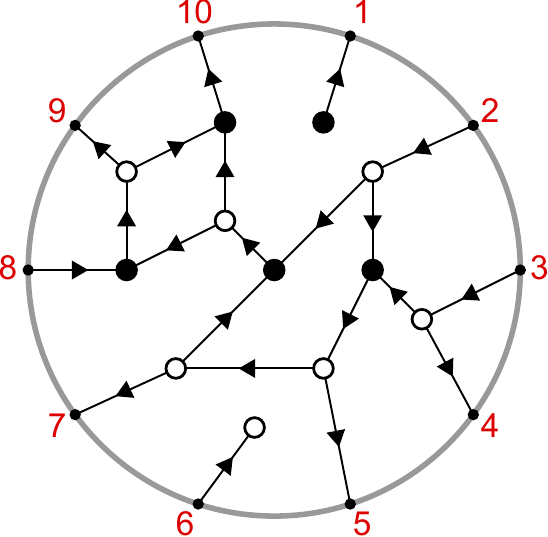}
   \label{fig:orientation}
  }
  \qquad\qquad
  \subfloat[][A flow from the set $\{2,3,6,8\}$ to the set $\{6,7,8,10\}$.]{
    \includegraphics[scale=0.9]{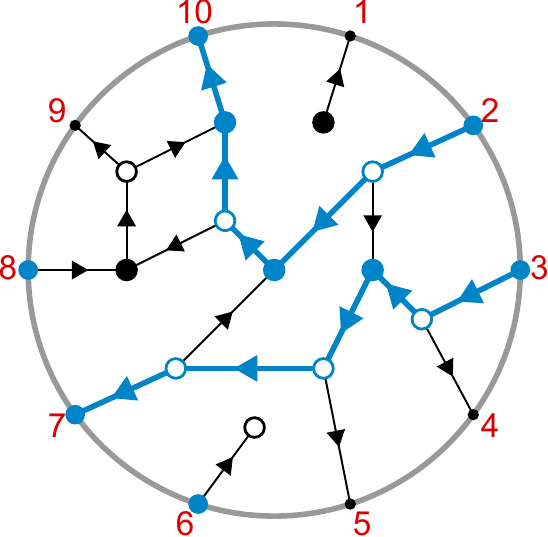}
   \label{fig:flow}
  }
  \\
  \subfloat[][A directed path from $3$ to $9$.]{
    \includegraphics[scale=0.9]{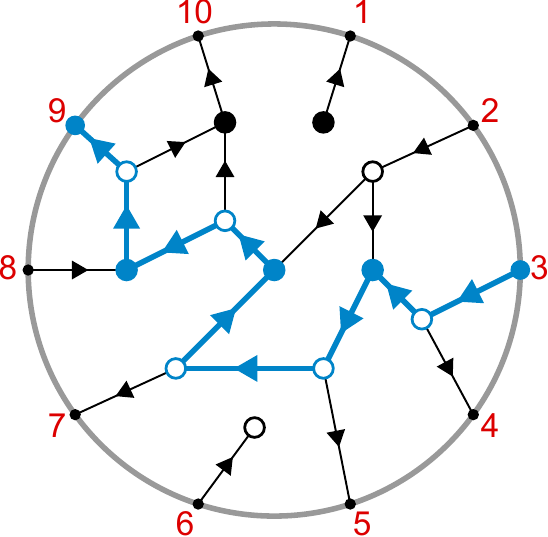}
   \label{fig:path}
  }
  \caption{A perfect orientation, a flow, and a directed path
in a plabic graph.}
\end{figure}

\begin{example}
 Figure \ref{fig:orientation} shows the plabic graph $G$ given in Figure \ref{fig:plabic} (rearranged without changing the combinatorial type), together with a perfect orientation $\O$ of its edges.
 The corresponding source set $I_{\O} = \{ 2,3,6,8 \}$ is then a basis of the corresponding positroid $M = \M(G)$.
 Figure \ref{fig:flow} depicts a flow from $I_{\O}$ to the set $I = \{6,7,8,10\}$, which implies that $I$ is a basis of $M$.
 The directed path in $\O$ from $3$ to $9$ highlighted in Figure \ref{fig:path} shows that the set $I_{\O} - \{3\} \cup \{9\}$ is a basis of $M$.
 Finally, since there is no directed path in $\O$ from $2$ to $4$, the set $I_{\O} - \{2\} \cup \{4\}$ is not a basis of $M$.
\end{example}

\section{Positroids, connected positroids, and  non-crossing partitions} 
\label{sec:nc}

In this section we begin to illustrate the role that non-crossing partitions play in the theory of positroids. More specifically, Theorem \ref{th:pos-sum}  shows that the connected components of a positroid form a non-crossing partition of $[n]$. 
Conversely, it also says that positroids on $[n]$ can be built out 
of connected positroids by first choosing a non-crossing
partition on $[n]$, and then putting the structure of a connected 
positroid on each of the blocks of the partition.

\begin{definition}
A matroid which cannot be written as the direct sum (see Definition \ref{def:sum}) of two nonempty
matroids is called \emph{connected}.
\end{definition}

\begin{proposition} \cite{Oxley}.
\label{prop:equiv}
Let $M$ be a matroid on $E$.  For two elements $a, b \in E$, we set 
$a \sim b$ whenever there are bases $B_1, B_2$ of $M$ such that 
$B_2 = B_1 - \{a\} \cup \{b\}$.  The relation $\sim$ is an equivalence
relation, and the equivalence classes are precisely the connected components of $M$.
\end{proposition}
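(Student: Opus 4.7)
The plan is to translate the definition of $\sim$ into a statement about circuits of $M$ and then apply standard matroid theory. Specifically, I would first establish the reformulation
\[
a \sim b \iff a = b \ \text{or some circuit of } M \text{ contains both } a \text{ and } b.
\]
This rests on the classical fact that, for any basis $B$ and $b \notin B$, the unique circuit $C(b,B) \subseteq B \cup \{b\}$ (the fundamental circuit of $b$ with respect to $B$) satisfies: $B - \{a\} \cup \{b\}$ is a basis if and only if $a \in C(b,B)$. The forward direction of the reformulation is then immediate. For the reverse direction, given a circuit $C \ni a,b$, extend the independent set $C - \{b\}$ to a basis $B$; by uniqueness of $C(b,B)$ we have $C = C(b,B)$, so $B - \{a\} \cup \{b\}$ is a basis and $a \sim b$.

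Reflexivity of $\sim$ is built into the reformulation via the $a = b$ clause, and symmetry is clear. To identify the $\sim$-classes with the connected components (which will also yield transitivity), I would argue in two directions. If $M = M_1 \oplus M_2$, then every circuit of $M$ is entirely contained in either $E(M_1)$ or $E(M_2)$, so elements lying in different direct-sum factors share no common circuit and cannot be $\sim$-related; this shows each $\sim$-class lies inside a single connected component. Conversely, for each connected component $S$ of $M$, the restriction $M|S$ is a connected matroid, and I would invoke the standard fact that in a connected matroid any two elements lie in a common circuit to conclude, via the reformulation applied inside $M|S$, that all elements of $S$ are $\sim$-related in $M$ (here using that circuits of $M|S$ are precisely the circuits of $M$ contained in $S$). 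Thus each connected component coincides with a single $\sim$-class, and transitivity of $\sim$ drops out as a byproduct.

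The main obstacle is the classical fact that in a connected matroid every pair of elements lies in a common circuit. This is not immediate from the direct-sum definition of connectedness; the standard proof proceeds by strong circuit elimination applied to a pair of circuits minimizing $|C_1 \cup C_2|$, and requires a careful case analysis to rule out a minimal counterexample. With this fact in hand, however, the rest of the proposition is a short chain of deductions from the circuit reformulation.
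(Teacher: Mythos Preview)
Your proposal is correct and follows essentially the same route as the paper: both arguments reduce the basis-exchange relation $\sim$ to the circuit relation ``$a$ and $b$ lie in a common circuit'' via fundamental circuits, and then invoke the standard result from \cite{Oxley} that this circuit relation is an equivalence relation whose classes are the connected components. The only cosmetic difference is in the converse construction: the paper takes a basis $D$ of $M/C$ and forms $D \cup C - \{a\}$ and $D \cup C - \{b\}$, whereas you extend $C - \{b\}$ to a basis and use uniqueness of the fundamental circuit; both work.
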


\begin{proof}
It is more customary to define $a \sim b$ if $a, b \in C$ for some circuit $C$. It is known that this is an equivalence relation whose equivalence classes are the connected components of $M$ \cite[Chapter 4.1]{Oxley}.
We now verify that these two definitions are equivalent:

If $B_1$ and $B_2$ are bases of $M$ and $B_2 = B_1 - \{a\} \cup \{b\}$ then there is a unique circuit $C \subset B_1 \cup \{b\}$, called the \emph{fundamental circuit of $B$ with respect to $b$}. It has the property that $a,b \in C$. Conversely, if $a, b$ are contained in a circuit $C$, 
let $D$ be a basis of $M/C$. Then $B_2 := D \cup C - \{a\}$ and $B_1 := D \cup C - \{b\}$ are bases of $M$ such that $B_2 = B_1 - \{a\} \cup \{b\}$.
\end{proof}

\begin{lemma}
Let $M$ be a positroid on $E$, 
and write it as a direct sum of connected matroids
$M =  M_1 \oplus \dots \oplus M_l$.  Then each $M_i$ is a positroid.
\end{lemma}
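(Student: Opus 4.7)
The plan is to observe that each connected component $M_i$ is literally a restriction of $M$, and then invoke Proposition \ref{prop:closed}, which already tells us that positroids are closed under restriction.

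More precisely, let $E_i$ denote the ground set of $M_i$, so that $E = E_1 \sqcup \cdots \sqcup E_l$. The first step is to verify that $M_i = M|E_i$. This is a direct unwinding of the definition of direct sum (Definition \ref{def:sum}): a basis of $M|E_i$ is a set of the form $B \cap E_i$ for some basis $B$ of $M$ with $|B \cap E_i|$ maximal, but every basis $B$ of $M$ decomposes as a disjoint union $B = B_1 \sqcup \cdots \sqcup B_l$ with $B_j$ a basis of $M_j$, so $|B \cap E_i|$ is constant and equals $r(M_i)$, and $B \cap E_i$ ranges over exactly the bases of $M_i$.

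Once this identification is made, the conclusion is immediate: by Proposition \ref{prop:closed}, the restriction of a positroid to any subset (with the inherited total order) is again a positroid, so $M_i = M|E_i$ is a positroid on $E_i$.

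There is no real obstacle; the content of the lemma has been absorbed into Proposition \ref{prop:closed}. The only thing to be slightly careful about is that the total order on $E_i$ making $M_i$ a positroid is the one inherited from $E$, which is exactly the convention under which Proposition \ref{prop:closed} is stated.
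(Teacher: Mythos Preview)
Your proof is correct and follows exactly the same approach as the paper: each $M_i$ is the restriction of $M$ to its ground set $E_i$, and positroids are closed under restriction by Proposition~\ref{prop:closed}. You simply spell out in more detail why $M_i = M|E_i$, which the paper leaves implicit.
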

\begin{proof}
This holds because each $M_i$ is the restriction of $M$
to some subset of $E$, and restrictions of positroids are positroids (Proposition \ref{prop:closed}.)
\end{proof}

\begin{proposition}\label{prop:pos-sum}
Suppose that $M$ is a positroid on $[n]$ which is the direct sum
$M_1 \oplus M_2$ of two connected positroids $M_1$ and $M_2$ on 
ground sets $E_1$ and $E_2$.  Then $E_1$ and $E_2$ are cyclic
intervals of $[n]$.
\end{proposition}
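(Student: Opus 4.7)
The plan is to compare two descriptions of the affine hull of the matroid polytope $\Gamma_M$: one coming from the direct-sum decomposition $M = M_1 \oplus M_2$, and one coming from the positroid inequality description of Proposition~\ref{prop:facets}.

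On the direct-sum side, connectedness of $M_1$ and $M_2$ implies that the connected components of $M$ are exactly $E_1$ and $E_2$, and $\Gamma_M$ is identified with the Cartesian product $\Gamma_{M_1} \times \Gamma_{M_2}$ inside $\mathbb{R}^{E_1} \oplus \mathbb{R}^{E_2} = \mathbb{R}^n$. Consequently the affine hull of $\Gamma_M$ is the codimension-$2$ subspace defined by $\sum_{i \in [n]} x_i = d$ and $\sum_{i \in E_1} x_i = r(M_1)$, and $\dim \Gamma_M = n - 2$.

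On the positroid side, Proposition~\ref{prop:facets} describes $\Gamma_M$ by the equation $\sum x_i = d$ together with cyclic-interval inequalities $\sum_{\ell \in [i,j]} x_\ell \leq a_{ij}$, so its affine hull is cut out by $\sum x_i = d$ and the subcollection of these inequalities that are tight throughout $\Gamma_M$. An inequality on a cyclic interval $[i,j]$ is tight precisely when $|B \cap [i,j]|$ is constant as $B$ ranges over bases of $M$. Since bases of $M$ are disjoint unions of bases of $M_1$ and $M_2$, this forces $|B_s \cap ([i,j] \cap E_s)|$ to be constant as $B_s$ ranges over bases of $M_s$, for both $s = 1,2$; combined with connectedness of each $M_s$, this forces $[i,j] \cap E_s \in \{\emptyset, E_s\}$, and hence $[i,j] \in \{\emptyset, E_1, E_2, [n]\}$.

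Matching the two descriptions, the tight cyclic intervals must, together with $\sum x_i = d$, cut out a subspace of codimension $2$. The choices $[i,j] = \emptyset$ and $[i,j] = [n]$ only contribute $\sum x_i = d$ itself, so at least one of $E_1, E_2$ must itself be a cyclic interval of $[n]$. Since $E_1$ and $E_2$ are complementary in $[n]$ and the complement of a cyclic interval is a cyclic interval, both are, as desired.

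The main obstacle is the central claim in the second step: that if $M_s$ is connected, then constancy of $|B_s \cap S|$ over bases of $M_s$ forces $S \in \{\emptyset, E_s\}$. This is the standard matroid fact that subsets with constant basis-intersection are unions of connected components, and it provides the crucial bridge from the polytope inequality description to the combinatorial conclusion that $E_1$ (and hence $E_2$) must be cyclic. Everything else is a clean dimension count built on the product decomposition $\Gamma_M = \Gamma_{M_1} \times \Gamma_{M_2}$ and the standard principle that the affine hull of a polyhedron is cut out by its implicit equalities.
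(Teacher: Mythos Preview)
Your proof is correct and is essentially the paper's second argument (the matroid polytope proof), with a slightly sharper execution. The paper argues that the equality $\sum_{e \in E_1} x_e = r_M(E_1)$, which holds on $\Gamma_M$, must be a linear combination of tight cyclic equalities; if $E_1$ were not a cyclic interval this would force at least three linearly independent equalities, contradicting $\dim \Gamma_M = n-2$. You instead characterize directly which cyclic intervals can be tight: using the separator fact (constant $|B_s \cap S|$ forces $S \in \{\emptyset, E_s\}$ for connected $M_s$) you show that any tight $[i,j]$ must lie in $\{\emptyset, E_1, E_2, [n]\}$, so if neither $E_s$ were a cyclic interval there would be no nontrivial implicit equality and the affine hull would have codimension~$1$. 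This is the same dimension-count contradiction, organized a bit more cleanly.

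It is worth noting that the paper also gives an independent first proof using plabic graphs: one finds $i<j<k<l$ with $i,k\in E_1$, $j,l\in E_2$, realizes the basis exchanges $i\leftrightarrow k$ and $j\leftrightarrow l$ as directed paths in a perfect orientation, and uses planarity to force these paths to cross, producing an illicit basis exchange between $E_1$ and $E_2$. Your polytope argument avoids plabic graphs entirely, relying only on Proposition~\ref{prop:facets} and basic polyhedral and matroid facts; the plabic-graph proof, on the other hand, is self-contained in the sense that it does not need the inequality description of positroid polytopes.
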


\begin{proof}
We propose two different arguments: one in terms of perfect orientations, and one in terms of matroid polytopes.

1. (Perfect orientations) Suppose that $E_1$ and $E_2$ are not cyclic intervals.  Then
there exist positive integers $1\leq i < j < k < l \leq n$ such that 
$i,k \in E_1$ and $j,l\in E_2$.
By Proposition \ref{prop:equiv}, there exist bases
$B_1$ and $B_1'$ of $M_1$ such that 
$i \in B_1$ and $B_1' = B_1 -\{i \} \cup \{k\}$,
and there exist bases
$B_2$ and $B_2'$ of $M_2$ such that 
$j \in B_2$ and $B_2' = B_2 -\{j \} \cup \{l\}$.
Then $B = B_1 \cup B_2$ is basis of $M$ which contains $i$ and $j$.
Moreover, 
$B$ admits a basis exchange which replaces $i$ with $k$,
and a basis exchange which replaces $j$ with $l$.

Therefore by Proposition \ref{prop:perf}
there exists a perfect orientation $\mathcal{O}$ of
a plabic graph for $M$ whose set $I_\mathcal{O}$ of sources contains $i$ and $j$.
And by Proposition \ref{prop:basis},  $\mathcal{O}$ has a directed path $P_1$ from $i$ to $k$,
and a directed path $P_2$ from $j$ to $l$.
Because $i<j<k<l$, these directed paths must intersect at some 
internal vertex $v$.  But now it is clear that $\mathcal{O}$
also contains a directed path $P_3$ from $i$ to $l$ (and from $j$ to $k$):
$P_3$ is obtained by following $P_1$ from $i$ to $v$, and then
following $P_2$ from $v$ to $l$.
Therefore $M$ has a basis exchange which switches $i$ and $l$,
contradicting our assumption that $i$ and $l$ lie in different
connected components of $M$.

2. (Matroid polytopes) The matroid polytope $\Gamma_M$ satisfies the equality 
\begin{equation}
\sum_{e \in E_1} x_e = r_M(E_1). \label{eq:E_1} 
\end{equation}
Since the polytope is cut out by the ``cyclic" equalities and inequalities of Proposition \ref{r:inequalities},  (\ref{eq:E_1}) must be a linear combination of cyclic equalities satisfied by $\Gamma_M$; \emph{i.e.}, equalities of the form $\sum_{e \in I} x_e = r_M(I)$ for cyclic intervals $I$. 

If $E_1$ is not a cyclic interval, then we need at least two cyclic equalities different from $\sum_{e \in [n]} x_i = r(M)$ to obtain (\ref{eq:E_1}). Therefore $\Gamma_M$ satisfies at least three linearly independent equations, and $\dim \Gamma_M \leq n-3$. This contradicts the fact \cite{Coxetermatroids} that $\dim \Gamma_M = n-c$ where $c$ is the number of connected components of $M$.
\end{proof}


\begin{definition}
Let $S$ be a partition $[n]=S_1 \sqcup \cdots \sqcup S_t$  of $[n]$ into pairwise disjoint non-empty subsets. We say that $S$ is a \emph{non-crossing partition} if there are no $a,b,c,d$ in cyclic order such that $a,c \in S_i$ and $b,d \in S_j$ for some $i \neq j$. 
Equivalently, place the numbers $1,2,\dots, n$ on $n$ vertices around a circle in clockwise order, and then for each $S_i$, draw a polygon on the corresponding vertices. If no two of these polygons intersect, then $S$ is a non-crossing partition of $[n]$. 

Let $NC_n$ be the set of non-crossing partitions of $[n]$.
\end{definition}

\begin{theorem}\label{th:pos-sum}
Let $M$ be a positroid on $[n]$ and let $S_1$, $S_2$, \dots, $S_t$ be the ground sets of the connected components of $M$. Then $\Pi_M=\{S_1, \ldots, S_t\}$ is a non-crossing partition of $[n]$, called the \emph{non-crossing partition of $M$}.

Conversely, if $S_1$, $S_2$, \dots, $S_t$ form a non-crossing partition of $[n]$ and $M_1$, $M_2$, \dots, $M_t$ are connected positroids on $S_1$, $S_2$, \dots, $S_t$, respectively, then 
$M_1 \oplus \dots \oplus M_t$ is a positroid.
\end{theorem}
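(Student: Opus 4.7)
I plan to prove Theorem \ref{th:pos-sum} in two directions, each time reducing to the two-block statements already in hand: Proposition \ref{prop:pos-sum} (direct sums of two connected positroids have ground sets that are cyclic intervals) and Proposition \ref{directsum} (positroids on complementary cyclic intervals glue to a positroid).

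For the forward direction, I would argue that the blocks $S_1,\dots,S_t$ are \emph{pairwise} non-crossing, which is equivalent to the whole partition being non-crossing. Fix $i\neq j$. By Proposition \ref{prop:closed}, the restriction $M' := M|(S_i\cup S_j)$ is a positroid on $S_i\cup S_j$ endowed with the inherited cyclic order from $[n]$. A direct inspection of the basis definition shows $M' = M_i\oplus M_j$, since any basis of $M$ is a disjoint union of bases of the $M_k$, and intersecting with $S_i\cup S_j$ picks out exactly a basis of $M_i$ and a basis of $M_j$. Proposition \ref{prop:pos-sum} then tells us $S_i$ and $S_j$ are cyclic intervals of $S_i\cup S_j$, which is precisely the statement that no four elements $a<b<c<d$ of $[n]$ with $a,c\in S_i$ and $b,d\in S_j$ can exist. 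Doing this for every pair yields the non-crossing conclusion.

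For the converse, I would induct on the number of blocks $t$, with the case $t=1$ being tautological. For $t\geq 2$, the combinatorial ingredient I need is the standard structural fact that every non-crossing partition of $[n]$ with at least one block contains a block that is a cyclic interval of $[n]$; this follows by taking a block $S$ whose cyclic arc hull is minimal and observing that any element of the arc outside $S$ would, by non-crossing, belong to a block fully contained in the arc, contradicting minimality. Using Lemma \ref{lem:cyclic} to cyclically relabel, I may assume that such an interval block is $S_1=[1,m]$. Then $\{S_2,\dots,S_t\}$ is a non-crossing partition of the cyclic interval $[m+1,n]$ (non-crossing is preserved because no crossing quadruple can wrap around the removed arc $S_1$), and by the inductive hypothesis $M_2\oplus\cdots\oplus M_t$ is a positroid on $[m+1,n]$. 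Applying Proposition \ref{directsum} to the decomposition $[n]=[1,m]\sqcup[m+1,n]$ glues this with $M_1$ to produce the positroid $M_1\oplus\cdots\oplus M_t$.

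The main obstacle is essentially bookkeeping rather than substance: one must be careful that ``non-crossing'' and ``cyclic interval'' behave correctly under passage to a sub-ground-set, and that restricting a direct sum to a union of its component ground sets returns the expected direct sum of the components. Once these checks are in place, the two-block propositions carry all the analytic weight and the inductive step is immediate.
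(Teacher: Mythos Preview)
Your proof is correct and follows essentially the same approach as the paper's: both directions reduce to the two-block Propositions \ref{prop:pos-sum} and \ref{directsum}, the forward direction via restriction to a pair of components and the converse via induction on $t$ after peeling off a block that is a cyclic interval. Your write-up is slightly more explicit in verifying that $M|(S_i\cup S_j)=M_i\oplus M_j$ and that a non-crossing partition always has a cyclic-interval block, but the underlying arguments are identical.
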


\begin{proof}
To prove the first statement of the theorem, 
let us suppose that $S_1$, $S_2$, \dots, $S_t$ do not form a non-crossing 
partition of $[n]$.  Then we can find two parts $S_a$ and $S_b$
and $1 \leq i < j < k < l\leq n$ such that 
$i,k \in S_a$ and $j,l \in S_b$.
But then the restriction of $M$ to $S_a \cup S_b$ is the direct sum of two connected positroids where $S_a$ and $S_b$ are not cyclic intervals.
This contradicts Proposition \ref{prop:pos-sum}.

We prove the second statement of the theorem by induction on $t$,
the number of parts in the non-crossing partition.  
Since $S_1$, \dots, $S_t$ is a non-crossing partition, 
we can assume that one of the parts, say $S_t$, is a cyclic interval
in $[n]$.
Then $S_1$, \dots, $S_{t-1}$ is a non-crossing partition
on $[n]-S_t$. By the inductive hypothesis, 
$M' = M_1 \oplus \dots \oplus M_{t-1}$ is a positroid on $[n]-S_t$.
But now  $M'$ and $M_t$ are positroids on $[n]-S_t$ and $S_t$,
which are cyclic intervals of $[n]$.  Therefore by 
Proposition \ref{directsum}, 
$M = M' \oplus M_t$ is a positroid.
\end{proof}

As remarked earlier, the first half of Theorem \ref{th:pos-sum} was also stated without proof by Oh, Postnikov, and Speyer in \cite{OPS}, and  will also appear in  Ford's preprint \cite{Ford}. The following results, describing direct sums and connectivity in terms of decorated permutations, are also anticipated in \cite{OPS}.
\begin{definition}
Suppose $S_1$ and $S_2$ are disjoint sets. If $\pi_1$ is a decorated permutation of $S_1$ and $\pi_2$ is a decorated permutation of $S_2$, the \emph{direct sum} $\pi_1 \oplus \pi_2$ is the decorated permutation of the set $S_1 \sqcup S_2$ such that $\pi|_{S_1} = \pi_1$ and $\pi|_{S_2} = \pi_2$.
\end{definition}

\begin{proposition}\label{r:permutation_sum}
Let $M_1, \dotsc, M_t$ be positroids on the ground sets $S_1, \dotsc , S_t$, respectively, and suppose $\{ S_1, \dotsc , S_t \}$ is a non-crossing partition of $[n]$. Let $\pi_i$ be the decorated permutation of $S_i$ associated to $M_i$, for $i = 1, \dotsc , t$. Then the decorated permutation associated to the positroid $M_1 \oplus \dotsb \oplus M_t$ is the direct sum $\pi_1 \oplus \dotsb \oplus \pi_t$.
\end{proposition}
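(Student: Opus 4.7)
The plan is to show that $M := M_1 \oplus \cdots \oplus M_t$ and $\pi := \pi_1 \oplus \cdots \oplus \pi_t$ share the same Grassmann necklace, which by the bijections of Section~\ref{sec:objects} forces $\pi = \pi_M$. Note that $M$ is a positroid by Theorem~\ref{th:pos-sum}, so $\pi_M$ is well-defined. The argument breaks into a matroid-theoretic decomposition, a cyclic-order identification, and a translation through the Grassmann necklace / decorated permutation dictionary.

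First I will decompose $I_k(M)$ blockwise. Since the bases of $M$ are the disjoint unions $B_1 \sqcup \cdots \sqcup B_t$ with $B_j \in \B(M_j)$, independence in $M$ decomposes blockwise, and so the greedy algorithm that produces the lex-min basis of $M$ with respect to $<_k$ on $[n]$ is the same as running the greedy algorithm separately on each $M_j$ with $<_k$ restricted to $S_j$. Consequently
\[
I_k(M) \,=\, \bigsqcup_{j=1}^{t} L_j(k), \qquad \text{where } L_j(k) \text{ is the lex-min basis of } M_j \text{ w.r.t. } <_k|_{S_j}.
\]

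Next I will identify $L_j(k)$ as a Grassmann necklace entry of $M_j$. Let $k_j := \min_{<_k} S_j$, the first element of $S_j$ encountered when reading $[n]$ cyclically starting at $k$. A direct check shows that $<_k$ restricted to $S_j$ coincides with the cyclic order $<_{k_j}$ on $S_j$ (induced from the linear order inherited from $[n]$): both lists enumerate $S_j$ starting at $k_j$ and proceeding cyclically. Hence $L_j(k) = I_{k_j}(M_j)$ and therefore $I_k(M) \cap S_j = I_{k_j}(M_j)$.

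Finally I translate back to decorated permutations. For $j \in S_i$ one has $\pi(j) = \pi_i(j)$, which lies in $S_i$ or is a colored fixed point at $j$. By the order identification above, $j <_k \pi_i(j)$ in $[n]$ is equivalent to $j <_{k_i} \pi_i(j)$ in $S_i$, and the colored fixed points transfer verbatim. So $j \in S_i$ is a weak $k$-excedance of $\pi$ if and only if it is a weak $k_i$-excedance of $\pi_i$; summing over $i$, the set of weak $k$-excedances of $\pi$ equals $\bigsqcup_i I_{k_i}(M_i) = I_k(M)$. Thus $\pi$ and $\pi_M$ have identical Grassmann necklaces and therefore coincide. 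The only step requiring genuine care is the cyclic-order identification; everything else is bookkeeping. It is worth noting that the non-crossing hypothesis is used solely through Theorem~\ref{th:pos-sum} to guarantee that $M$ is a positroid — the remainder of the argument would work for any partition of $[n]$.
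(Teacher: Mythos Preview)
Your argument is correct. Both your proof and the paper's compute the Grassmann necklace of the direct sum blockwise, but the paper proceeds by induction on $t$, reducing to the case $t=2$ where $S_1$ and $S_2$ are complementary cyclic intervals; in that setting the identification $<_\ell|_{S_2} = <_j$ (with $j$ the first element of $S_2$) is immediate. You instead handle all blocks at once via the observation that $<_k|_{S_j}$ coincides with the intrinsic cyclic order $<_{k_j}$ on $S_j$ for an arbitrary subset $S_j$, and then pass through weak excedances rather than reading $\pi$ off the necklace transitions. Your route is slightly more uniform and, as you note, makes transparent that the non-crossing hypothesis enters only to ensure $M$ is a positroid; the paper's induction implicitly uses non-crossingness again to peel off a block that is a cyclic interval.
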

\begin{proof}
By induction, it is enough to prove the result when $t=2$. For this purpose, suppose $M_1$ and $M_2$ are positroids on disjoint cyclic intervals $[i,j-~1]$ and $[j,i-1]$ of $[n]$, respectively. Denote by $\pi_1$ and $\pi_2$ their corresponding decorated permutations. Let $(I_i,I_{i+1}, \dotsc , I_{j-1})$ be the Grassmann necklace associated to $M_1$, and let $(J_j,J_{j+1}, \dotsc , J_{i-1})$ be the Grassmann necklace associated to $M_2$. Recall that the bases of $M$ are the disjoint unions of a basis of $M_1$ with a basis of $M_2$. If $l \in [i,j-1]$ then the lexicographically minimal basis of $M$ with respect to the order $<_l$ of $[n]$ is $I_l \sqcup J_j$. Similarly, if $l \in [i,j-1]$, the lexicographically minimal basis of $M$ with respect to $<_l$ is $I_i \sqcup J_l$. It follows that the decorated permutation associated to $M$ is $\pi_1 \oplus \pi_2$.
\end{proof}

\begin{corollary}\label{cor:finest}
 Let $M$ be a positroid on $[n]$, and let $\pi$ be is its corresponding decorated permutation. Then the non-crossing partition $\Pi_M$ associated to $M$ is the finest non-crossing partition of $[n]$ such that for any $i \in [n]$, the numbers $i$ and $\pi(i)$ are in the same block of $\Pi_M$.
\end{corollary}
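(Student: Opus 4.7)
The plan is to leverage the bijection between positroids on $[n]$ and decorated permutations of $[n]$, together with Proposition \ref{r:permutation_sum}, to reduce the corollary to a two-line computation. There are two things to show: (i) $i$ and $\pi(i)$ lie in the same block of $\Pi_M$ for every $i \in [n]$; and (ii) $\Pi_M$ is the finest non-crossing partition with property (i).

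For (i), I would write $M = M_1 \oplus \cdots \oplus M_t$, where the $M_i$ are the connected components of $M$, supported on the blocks $S_1, \ldots, S_t$ of $\Pi_M$. Theorem \ref{th:pos-sum} guarantees that $\{S_1, \ldots, S_t\}$ is a non-crossing partition of $[n]$, so Proposition \ref{r:permutation_sum} applies to yield $\pi = \pi_1 \oplus \cdots \oplus \pi_t$, where $\pi_i$ is the decorated permutation of the positroid $M_i$. In particular each block $S_i$ is $\pi$-invariant, so $i$ and $\pi(i)$ always lie in the same block of $\Pi_M$.

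For (ii), let $\Pi' = \{T_1, \ldots, T_s\}$ be any non-crossing partition of $[n]$ such that $i$ and $\pi(i)$ share a block of $\Pi'$ for every $i$. Since $\pi$ is a bijection and each $T_j$ is finite, the hypothesis $\pi(T_j) \subseteq T_j$ upgrades to $\pi(T_j) = T_j$, and the restriction $\pi|_{T_j}$ is a genuine decorated permutation of $T_j$ (fixed-point decorations being inherited from $\pi$). Via the bijection between decorated permutations and positroids, let $N_j$ be the positroid on $T_j$ whose decorated permutation is $\pi|_{T_j}$. Applying Proposition \ref{r:permutation_sum} to the non-crossing partition $\Pi'$ now shows that $N := N_1 \oplus \cdots \oplus N_s$ is a positroid on $[n]$ whose decorated permutation is $\pi|_{T_1} \oplus \cdots \oplus \pi|_{T_s} = \pi$. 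Injectivity of the positroid-to-permutation bijection forces $N = M$. This exhibits $M$ as a direct sum indexed by $\Pi'$, so each connected component of $M$ — i.e., each block of $\Pi_M$ — must be contained in a single block of $\Pi'$. Hence $\Pi_M$ refines $\Pi'$, proving (ii).

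The argument is essentially routine once Theorem \ref{th:pos-sum} and Proposition \ref{r:permutation_sum} are in hand; the only genuinely verifiable point is the observation that a permutation which sends each block of $\Pi'$ into itself actually sends each block onto itself, which is exactly what is needed to make $\pi|_{T_j}$ a bona fide decorated permutation of $T_j$ and thus to invoke the positroid-permutation bijection on each block.
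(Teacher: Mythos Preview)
Your proof is correct and uses essentially the same ingredients as the paper's proof: Proposition~\ref{r:permutation_sum} together with the bijection between positroids and decorated permutations. The only organizational difference is that the paper starts from the finest non-crossing partition $\{S_1,\dots,S_t\}$ with the stated property and shows directly that it equals $\Pi_M$ (arguing that each $M_i$ must be connected because, by finestness, $\pi|_{S_i}$ admits no further direct-sum decomposition), whereas you establish separately that $\Pi_M$ has the property and that it refines every non-crossing partition with the property.
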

\begin{proof}
 Let $\{S_1, \ldots, S_t\}$ be the finest non-crossing partition satisfying the described condition. Then for any $i = 1, \dotsc , t$, the decorated permutation $\pi$ restricts to a decorated permutation of $S_i$, which corresponds to a positroid $M_i$ on $S_i$. Since $\pi$ decomposes as $\pi = \pi|_{S_1} \oplus \dotsb \oplus \pi|_{S_t}$, by Proposition \ref{r:permutation_sum} we have $M = M_1 \oplus \dotsb \oplus M_t$. Moreover, each of the positroids $M_i$ is connected, since its decorated permutation $\pi_i$ cannot be decomposed into the direct sum of smaller decorated permutations. The matroids $M_1, \dotsc, M_t$ are then the connected components of $M$, and therefore $\Pi_M =  \{S_1, \ldots, S_t\}$.
\end{proof}

Note that if we represent a decorated permutation of $[n]$ by means of its ``chord diagram'' (see Figure \ref{fig:chord}), Corollary \ref{cor:finest} says that the blocks of its corresponding non-crossing partition are the connected components of the diagram.
\begin{figure}[ht]
  \centering
   \includegraphics[scale=0.9]{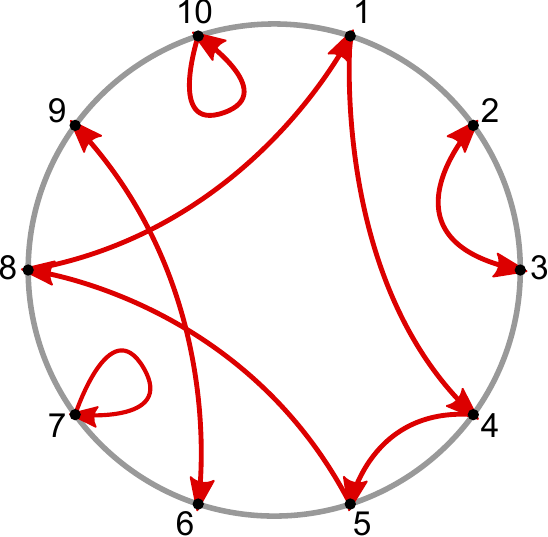}
   \qquad\qquad
   \includegraphics[scale=0.9]{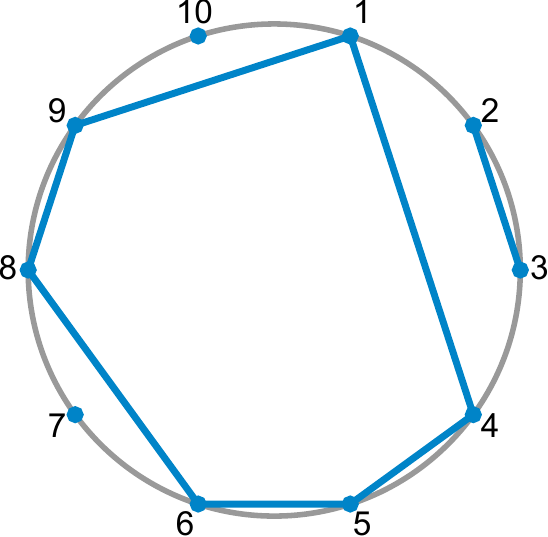}
  \caption{The chord diagram of a decorated permutation of $[10]$ and its corresponding non-crossing partition.}
  \label{fig:chord}
\end{figure}

As a corollary, we obtain a bijection between connected positroids on $[n]$ and an interesting class of permutations of $[n]$.

\begin{definition}\cite{Callan}
A \emph{stabilized-interval-free} (SIF) permutation $\pi$ of $[n]$ is a permutation which does not stabilize any proper interval of $[n]$; that is, $\pi(I) \neq I$ for all intervals $I \subsetneq [n]$.
\end{definition}

\begin{corollary}\label{cor:connSIF}
For $n \geq 2$, the number of connected positroids on $[n]$ equals the number of SIF permutations on $[n]$.
\end{corollary}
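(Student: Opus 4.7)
The plan is to combine the bijection between positroids and decorated permutations (Section~\ref{sec:objects}) with Corollary~\ref{cor:finest}, which characterizes the non-crossing partition $\Pi_M$ of a positroid as the \emph{finest} non-crossing partition of $[n]$ placing $i$ and $\pi(i)$ in a common block for every $i$. Under this bijection, connected positroids on $[n]$ correspond exactly to decorated permutations $\pi$ whose associated non-crossing partition equals the trivial partition $\{[n]\}$.

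First I would observe that for $n \geq 2$, any such $\pi$ is fixed-point-free. Indeed, if $\pi(j) = j$ (with either decoration), then $j$ imposes no constraint of the form ``$j$ and $\pi(j)$ lie in the same block'', so in the finest non-crossing partition of Corollary~\ref{cor:finest} the element $j$ occupies a singleton block, producing at least two blocks. Since a fixed-point-free permutation carries no decoration data, this identifies connected positroids on $[n]$ with fixed-point-free permutations $\pi$ of $[n]$ whose finest non-crossing partition placing $i$ and $\pi(i)$ together is $\{[n]\}$, and it remains to check that this class of permutations coincides with the SIF permutations.

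For one direction, if $\pi$ is not SIF then $\pi$ stabilizes some proper linear interval $I = [a,b] \subsetneq [n]$, and as a bijection it also stabilizes $[n]\setminus I = [b+1,a-1]$, which is a cyclic interval. Then $\{I,[n]\setminus I\}$ is a two-block non-crossing partition placing $i$ and $\pi(i)$ together for every $i$, so the finest such non-crossing partition is a refinement of it and is nontrivial. Conversely, suppose $\Pi_M$ has at least two blocks. Each block is $\pi$-stable, either by Corollary~\ref{cor:finest} or by the direct sum description in Proposition~\ref{r:permutation_sum}. Granting the structural lemma below, some block of $\Pi_M$ is a proper linear interval $[a,b]$ of $[n]$, so $\pi$ stabilizes a proper interval and is not SIF.

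The main obstacle is the following lemma: every non-crossing partition of $[n]$ with at least two blocks contains at least one block that is a linear interval $[a,b]$ (necessarily proper since there are multiple blocks). I would prove this by induction on $n$. Let $B_1$ be the block containing $1$ and write $B_1 = \{1 = i_1 < i_2 < \cdots < i_s\}$. By the non-crossing property every other block is contained in one of the ``arcs'' $A_j := [i_j+1,\, i_{j+1}-1]$ for $j < s$, or $A_s := [i_s+1, n]$; each nonempty arc is a proper linear interval of $[n]$ not containing $1$. Since the partition has at least two blocks, some arc $A_j$ is nonempty. If $A_j$ contains only a single block, that block equals $A_j$, giving a linear interval block as desired. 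Otherwise the restriction of the partition to $A_j$ has at least two blocks, and the inductive hypothesis applied to $A_j$ produces a linear-interval block inside $A_j$, which is also a linear interval of $[n]$.
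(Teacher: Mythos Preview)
Your proof is correct and follows the same overall strategy as the paper: use Corollary~\ref{cor:finest} to characterize connected positroids via their decorated permutations, reduce to fixed-point-free permutations for $n\ge 2$, and then match ``finest non-crossing partition is trivial'' with the SIF condition.

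The one place where you and the paper diverge is the direction ``not connected $\Rightarrow$ not SIF.'' You prove a structural lemma: every non-crossing partition of $[n]$ with at least two blocks contains a block that is a \emph{linear} interval, and since each block of $\Pi_M$ is $\pi$-stable, $\pi$ stabilizes a proper interval. This is perfectly fine, and your inductive argument is clean. The paper, however, bypasses this lemma with a one-line trick: it first observes that $M$ is connected if and only if $\pi$ stabilizes no proper \emph{cyclic} interval (this is essentially your argument for the other direction, together with the fact that the complement of any block of $\Pi_M$ is a union of $\pi$-stable cyclic intervals). Then it notes that $\pi$ stabilizes a cyclic interval $[i,j-1]$ if and only if it stabilizes the complementary cyclic interval $[j,i-1]$, and at least one of these two is an ordinary (linear) interval. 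So ``stabilizes some proper cyclic interval'' and ``stabilizes some proper linear interval'' are equivalent, giving SIF directly. Your lemma is a nice standalone fact about non-crossing partitions, but the complementation trick is shorter and worth knowing.
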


\begin{proof}
It follows from Corollary \ref{cor:finest} that a positroid $M$ is connected if and only if its corresponding decorated permutation $\pi$ does not stabilize any proper \emph{cyclic} interval of $[n]$. But $\pi$ stabilizes a cyclic interval $[i,j-1]$ if and only if it stabilizes its complement $[j,i-1]$. Since at least one of these two cyclic intervals is a regular (non-cyclic) interval of $[n]$, we have that $M$ is connected if and only if $\pi$ is SIF.
\end{proof}

\section{A complementary view on positroids and non-crossing partitions}
\label{sec:kreweras}

We now give a complementary description of the non-crossing partition of a positroid, as defined by Theorem \ref{th:pos-sum}. To do that we need the notion of Kreweras complementation.

\begin{definition}
Let $\Pi$ be a non-crossing partition of $[n]$.
Consider nodes $1,1',2,2', \ldots, n,n'$ in that order around a circle, and draw the partition $\Pi$ on the labels $1, 2, \ldots, n$. The \emph{Kreweras complement} $K(\Pi)$ is the coarsest (non-crossing) partition of $[n]$ such that when we regard it as a partition $K(\Pi)'$ of $1', 2', \ldots, n'$, the partition $\Pi \cup K(\Pi)'$ of $1,1',2,2', \ldots, n,n'$ is non-crossing.
\end{definition}
Figure \ref{fig:Kreweras} shows an example of Kreweras complementation.

\begin{figure}[ht]
\begin{center}
 \includegraphics[scale=0.6]{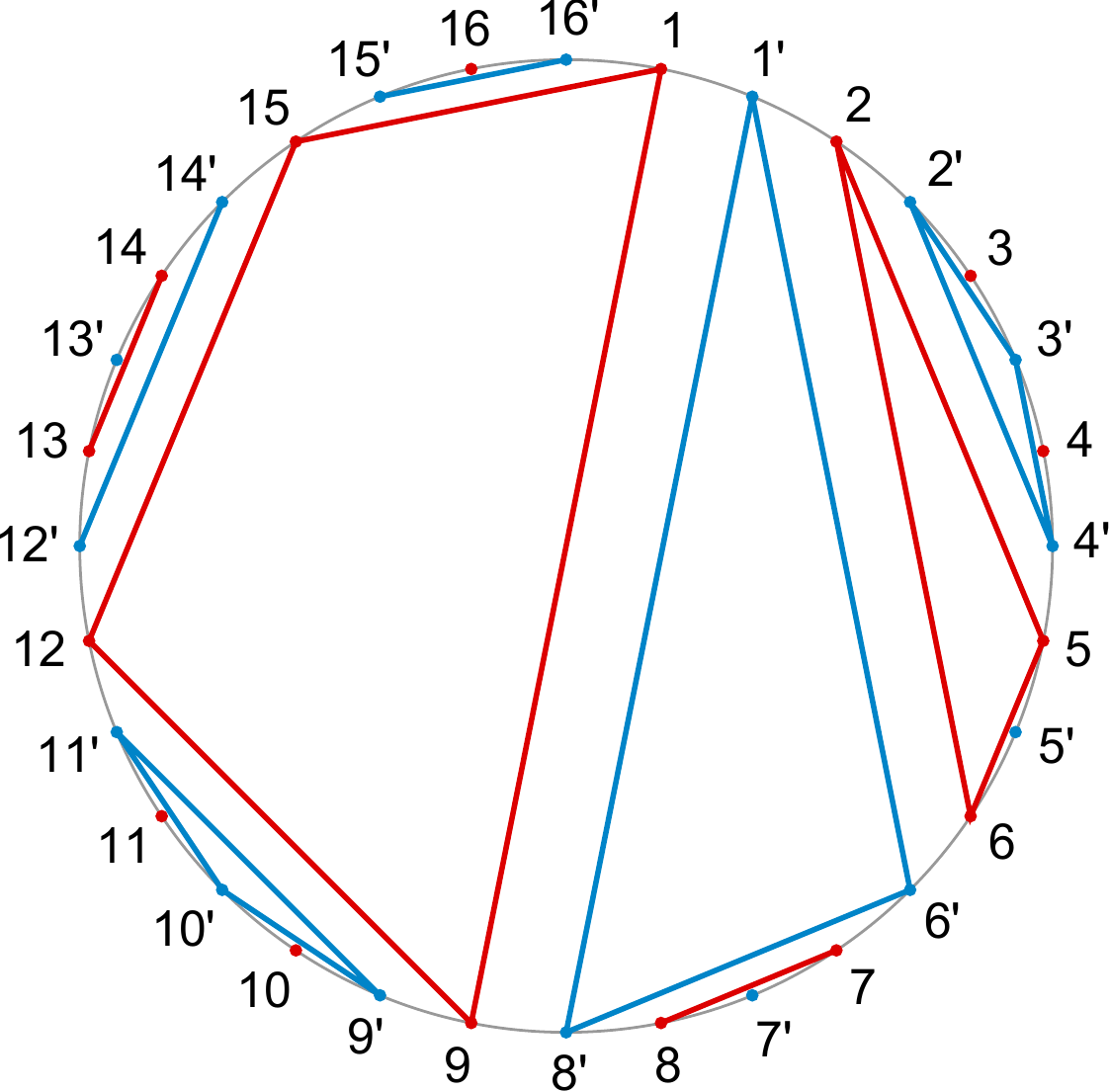}
 \caption{
The Kreweras complement of the (blue) partition {\small $\{\{1,9,12,15\}, \{2,5,6\}, \{3\}, \{4\}, \{7,8\}, \{10\},\{11\}, \{13,14\}, \{16\} \}$} is 
{\small $\{ \{1,6,8\}, \{2,3,4\}, \{5\}, \{7\}, \{9,10,11\}, \{12,14\}, \{13\}, \{15,16\} \}$},
shown in red.}
 \label{fig:Kreweras}
\end{center}
\end{figure}

Let $M$ be a rank $d$ positroid on $[n]$ and consider its matroid polytope $\Gamma_M$ in $\R^n$. Instead of the usual coordinates $x_1, \ldots, x_n$, consider the system of coordinates $y_1, \ldots, y_n$ given by 
\[
y_i = x_1+\cdots + x_i \qquad (1 \leq i \leq n).
\]
Recall that by Proposition \ref{prop:facets}, the inequality description of $\Gamma_M$ has the form
\[
y_n = d, \qquad y_j-y_i \leq r_{ij} \textrm{ for } i \neq j,
\]
where $r_{ij} = r_M([i,j])$. For $i, j \in [n]$, define 
\[
i \sim^* j  \textrm{ if and only if } y_j-y_i \textrm { is constant for } y \in \Gamma_M.
\]
This clearly defines an equivalence relation $\sim^*$ on $[n]$. Let $\Pi^*_M$ be the partition of $[n]$ into equivalence classes of $\sim^*$

\begin{theorem}
The partition $\Pi^*_M$ is the Kreweras complement of the non-crossing partition $\Pi_M$ of $M$. Consequently, it is also non-crossing.
\end{theorem}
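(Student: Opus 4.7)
The plan is to prove that both $\Pi^*_M$ and $K(\Pi_M)$ are described by the same equivalence relation on $[n]$, expressed purely in terms of the non-crossing partition $\Pi_M$.

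First I would translate the relation $\sim^*$ into matroidal language. For $i<j$ in $[n]$, evaluating on a vertex $e_B$ of $\Gamma_M$ gives $y_j - y_i = x_{i+1} + \cdots + x_j = |B\cap [i+1,j]|$; since $y_j-y_i$ is a linear functional, it is constant on $\Gamma_M$ iff $|B\cap[i+1,j]|$ is the same for every basis $B$ of $M$. By standard matroid theory \cite[Ch.~4]{Oxley} this holds iff $[i+1,j]$ is a \emph{separator} of $M$ (i.e.\ $r_M([i+1,j])+r_M([j+1,i])=d$), which is in turn equivalent to $[i+1,j]$ being a union of connected components of $M$. By Theorem~\ref{th:pos-sum}, these connected components are exactly the blocks of $\Pi_M$, so
\[
i\sim^* j \quad\Longleftrightarrow\quad [i+1,j]\text{ is a union of blocks of }\Pi_M.
\]
The case $i>j$ is symmetric and reduces to the same condition on the complementary cyclic interval, which is a union of blocks iff $[i+1,j]$ is.

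Next I verify that $K(\Pi_M)$ satisfies the same description. Under the usual identification of $\ell \in [n]$ with the primed point $\ell'$ placed on the circle between $\ell$ and $\ell+1$, two indices $i<j$ lie in the same block of $K(\Pi_M)$ iff $i'$ and $j'$ can be joined by a chord inside the disk that avoids every polygon of $\Pi_M$. Because $\Pi_M$ is non-crossing, such a chord exists precisely when the arc of original points $\{i+1,i+2,\dotsc,j\}$ that it would cut off is a union of blocks of $\Pi_M$; otherwise some block would be forced to straddle the chord. Hence $\sim_{K(\Pi_M)}$ and $\sim^*$ agree on $[n]$, giving $\Pi^*_M = K(\Pi_M)$, and in particular $\Pi^*_M$ is non-crossing.

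The main obstacle is turning the abstract ``coarsest non-crossing partition compatible with $\Pi_M$'' definition of the Kreweras complement into the concrete cyclic-interval condition used above. This is essentially a planar topology step: the polygons of $\Pi_M$ dissect the disk into simply connected regions, each bordered by a set of consecutive outer arcs, and two primed points lie in the same block of $K(\Pi_M)$ exactly when they border a common region, which is the same as saying that the cyclic interval between them is a union of blocks of $\Pi_M$. The matroid-theoretic half is then a direct application of Proposition~\ref{r:inequalities} together with the classical separator fact.
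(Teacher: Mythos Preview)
Your argument is correct and rests on the same two ingredients as the paper's proof: (i) $y_j-y_i$ is constant on $\Gamma_M$ exactly when the cyclic interval $[i+1,j]$ is a union of connected components of $M$, and (ii) $i',j'$ lie in the same block of $K(\Pi_M)$ exactly when the chord $i'j'$ crosses no block of $\Pi_M$, i.e.\ when $[i+1,j]$ is a union of blocks of $\Pi_M$. The paper establishes the two refinements $K(\Pi_M)\preceq \Pi_M^*$ and $\Pi_M^*\preceq K(\Pi_M)$ separately---the first by the same computation you give, the second by a direct basis-exchange contradiction (choosing $k,l$ in the same component with $k\in[i+1,j]$, $l\notin[i+1,j]$)---whereas you factor both directions through the single characterization ``$[i+1,j]$ is a separator of $M$.'' This is a minor but pleasant reorganization: invoking the standard equivalence between separators, constant basis-intersection, and unions of components lets you treat both inclusions at once, while the paper's hands-on basis exchange avoids citing that fact. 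One small note: Proposition~\ref{r:inequalities} is not actually needed for your argument, since the equivalence ``$y_j-y_i$ constant on $\Gamma_M$ $\Leftrightarrow$ $|B\cap[i+1,j]|$ constant over bases'' follows directly from $\Gamma_M$ being the convex hull of the $e_B$.
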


\begin{proof}
First we prove that $K(\Pi_M)$ is a refinement of $\Pi_M^*$.
Consider a block $S$ of $K(\Pi_M)$ and two cyclically consecutive elements $i<j$ in $S$. Since $\Pi_M \cup K(\Pi_M)'$ is non-crossing in $1,1', \ldots, n,n'$, the cyclic interval $[i+1,j]$ of $[n]$ is a disjoint union of blocks $S_1, \ldots, S_s$ of $\Pi_M$, which are themselves connected components of $M$. If $1 \leq i<j$ in cyclic order, we have that
\[
y_j-y_i = \sum_{a \in [i+1, j]} x_a = \sum_{r=1}^s \sum_{a \in S_r} x_a  = \sum_{r=1}^s r(S_r)
\]
is constant in $\Gamma_M$, and therefore $i \sim^* j$. A similar computation holds if $i< 1 \leq j$. It follows that $K(\Pi_M)$ is a refinement of $\Pi_M^*$.

Now assume that $i \sim^*j$ but $i$ and $j$ are not in the same block of $K(\Pi_M)$. 
Looking at the non-crossing partition $\Pi_M \cup K(\Pi_M)'$, this means that the edge $i'j'$ (which is not in $K(\Pi_M)$) must cross an edge $kl$ of $\Pi_M$. Assume $k \in [i+1,j]$ and $l \notin [i+1,j]$. Now, since $k \sim l$, we can find bases $B$ and $B'$ of $M$ with $B'=B - \{k\} \cup \{l\}$. 
But then 
$\sum_{a \in [i+1, j]}x_a$ is not constant on $\Gamma_M$: more specifically,
the value it takes on the vertex $e_B$ is $1$ more than the value
it takes on the vertex $e_{B'}$.
This contradicts the fact that $i \sim^*j$.
%
\end{proof}

\section{Positroid polytopes and non-crossing partitions}\label{sec:embed}

Having explained the role that non-crossing partitions play in the connectivity of positroids, we use that knowledge to show that the face poset of a positroid polytope lives inside the poset of \emph{weighted non-crossing partitions.}

\begin{definition}
A \emph{weighted non-crossing partition} $S^w$ of $[n]$ is a non-crossing partition $S$ of $[n]$, say $[n]=S_1 \sqcup \cdots \sqcup S_t$, together with a weight vector $w=(w_1,\dots,w_t) \in (\ZZ_{\geq0})^t$ of integer 
weights $w_1 = w(S_1), \dots, w_t = w(S_t)$ with $0 \leq w_i \leq |S_i|$ for $i=1, \ldots, t$. The \emph{weight} of the partition $S^w$ is $w_1+\cdots+w_t$. 
\end{definition}

The set $NC_n$ of non-crossing partitions of $[n]$ is partially ordered by refinement; this poset has many interesting properties and connections to several fields of mathematics. We extend that order to the context of weighted non-crossing partitions. 

\begin{definition}
Let $NC_n^d$ be the poset of non-crossing partitions of $[n]$ of weight $d$, where the cover relation is given by $S^w \lessdot T^v$ if 

\noindent $\bullet$ $T=\{T_1, \ldots, T_t\}$ and $S=\{T_1, \ldots, T_{h-1}, A, T_h - A, T_{h+1}, \ldots, T_t\}$ for some index $1 \leq h \leq t$ and some proper subset $\emptyset \subsetneq A \subsetneq T_h$, and

\noindent $\bullet$ $v(T_h) = w(A)+w(T_h - A)$ and $v(T_j) = w(T_j)$ for all $j \neq h$.
%
\noindent Let $NC_n^d \cup \{\widehat{0}\}$ be this poset with an additional minimum element $\widehat{0}$.
\end{definition}

The poset $NC_n^d$ is ranked of height $n$. It has a unique maximal element $\widehat{1}$ corresponding to the trivial partition of $[n]$ into one part of weight $d$. 

Readers familiar with 
the \emph{poset $\Pi_n^w$ of weighted partitions} defined by Dotsenko and Khorsohkin \cite{DK} and further studied by Gonz\'alez and Wachs \cite{GW} may notice the relationship between these two posets. 
The subposet of $\Pi_n^w$ consisting of the non-crossing partitions of weight $d$ is almost equal to $NC_n^d$; the only difference is that $0 \leq w(S_i) \leq |S_i|-1$ in $\Pi_n^w$ and $0 \leq w(S_i) \leq |S_i|$ in $NC_n^d$. For our purposes we only need to allow $w(S_i) = |S_i|$ for $|S_i|=1$, but this small distinction is important; see Remark \ref{rem:weighted}.

\begin{theorem}\label{thm:embed}
If $M$ is a rank $d$ positroid on $[n]$ then the face poset of the matroid polytope $\Gamma_M$ is an induced subposet of $NC_n^d \cup \{\widehat{0}\}$.
\end{theorem}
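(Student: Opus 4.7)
The plan is to construct a natural map $\phi$ from the face poset of $\Gamma_M$ into $NC_n^d \cup \{\widehat{0}\}$ and show it is an order embedding. By Corollary \ref{cor:faceofpositroid}, every non-empty face $F$ of $\Gamma_M$ is itself a positroid polytope $\Gamma_N$ for a uniquely determined positroid $N$ on $[n]$; and by Theorem \ref{th:pos-sum}, the connected components of $N$ form a non-crossing partition $\Pi_N = \{T_1, \ldots, T_t\}$. I define $\phi(F)$ to be $\Pi_N$ weighted by $w(T_i) := r_N(T_i)$. Since $F$ is contained in the hyperplane $\sum x_i = d$, we have $\sum_i w(T_i) = r(N) = d$, so $\phi(F) \in NC_n^d$. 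The empty face is sent to $\widehat{0}$.

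The crucial observation driving the injectivity and the order relation is that the affine hull of the positroid polytope $\Gamma_N$ is cut out precisely by the $t$ equations $\sum_{j \in T_i} x_j = w(T_i)$. The indicator vectors $1_{T_i}$ of the disjoint blocks $T_i$ are linearly independent, so these equations define an affine subspace of codimension $t$; by the fact (cited in the proof of Proposition \ref{prop:pos-sum}) that $\dim \Gamma_N = n-t$, this subspace equals $\mathrm{aff}(\Gamma_N)$. Consequently the weighted non-crossing partition $\phi(F)$ determines $\mathrm{aff}(F)$, and since $F$ is a face of $\Gamma_M$ we have $F = \Gamma_M \cap \mathrm{aff}(F)$. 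Together these show that $\phi$ is injective.

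To finish, I show $F \subseteq F'$ if and only if $\phi(F) \leq \phi(F')$ in $NC_n^d$. If $F \subseteq F'$, then $\mathrm{aff}(F) \subseteq \mathrm{aff}(F')$; expressing each defining equation of $\mathrm{aff}(F')$ as a linear combination of the linearly independent defining equations of $\mathrm{aff}(F)$ forces every block of $\Pi_{N'}$ to decompose as a disjoint union of blocks of $\Pi_N$, with weights adding up block-wise. It is a standard fact about the non-crossing partition lattice $NC_n$ that any refinement between two non-crossing partitions can be realized by a chain of cover moves, each splitting a single block into two non-crossing pieces; propagating the weights compatibly along this chain gives a chain from $\phi(F')$ down to $\phi(F)$ in $NC_n^d$. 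Conversely, $\phi(F) \leq \phi(F')$ directly yields $\mathrm{aff}(F) \subseteq \mathrm{aff}(F')$, whence
\[
F \;=\; \Gamma_M \cap \mathrm{aff}(F) \;\subseteq\; \Gamma_M \cap \mathrm{aff}(F') \;=\; F'.
\]

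The step I expect to be the main obstacle is the forward direction of the order relation: while the refinement and weight-additivity conditions follow cleanly from the linear algebra of affine hulls, realizing the refinement as a chain of single two-block non-crossing splits is not entirely routine, since a naive one-block-at-a-time split can fail the non-crossing condition (as with refining $\{1,2,3,4,5\}$ to $\{\{2,4\},\{1\},\{3\},\{5\}\}$, which is not a single non-crossing cover but must be produced through an intermediate partition like $\{\{1,5\},\{2,3,4\}\}$). Invoking the lattice structure of $NC_n$ resolves this, after which the weighted analogue is immediate.
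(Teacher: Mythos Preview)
Your proof is correct and follows essentially the same strategy as the paper: define the same map via connected components and their ranks, prove injectivity by recovering $F$ as $\Gamma_M$ intersected with the hyperplanes $\sum_{j\in T_i} x_j = w(T_i)$, and prove the backward (induced) direction by the same containment of affine hulls.

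The one genuine difference is in the forward direction. The paper works at the level of cover relations in the face poset: if $F \gtrdot G$ it invokes Proposition~\ref{prop:face} to identify the matroid of $G$ as $K_1\oplus\cdots\oplus (K_h|A)\oplus (K_h/A)\oplus\cdots\oplus K_c$, which immediately gives a single non-crossing split and hence $\Pi_G^w \lessdot \Pi_F^w$. You instead treat arbitrary $F\subseteq F'$ directly via the linear algebra of affine hulls, obtain refinement with additive weights, and then appeal to the graded lattice structure of $NC_n$ to produce a chain of two-block non-crossing splits (your weight check $w(A)+w(B)\le |A|+|B|$ goes through at each step). Both arguments are clean; the paper's buys the explicit matroidal description of the split as restriction/contraction, while yours is uniformly polytopal and avoids Proposition~\ref{prop:face} altogether. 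Your flagged ``main obstacle'' is a non-issue once the lattice property of $NC_n$ is invoked, as you note.
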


\begin{proof}
By Corollary \ref{cor:faceofpositroid}, any non-empty face $F$ of the positroid polytope $\Gamma_M$ is itself a positroid polytope, say $F=\Gamma_{N}$. Write $N = N_1 \oplus \cdots \oplus N_c$ as a direct sum of its connected components. By Theorem \ref{th:pos-sum}, the partition $\Pi_F=\{N_1, \ldots, N_c\}$ of $[n]$ is non-crossing. 
Assign weights $w(N_i) = r_N(N_i)$ for $1 \leq i \leq c$ to the blocks of this partition. Let $\Pi_F^w$ be the resulting weighted non-crossing partition. 
Since $r_N(N_1) + \cdots + r_N(N_c) = r(N) = r(M) = d$ we have that $\Pi_F^w \in NC_n^d$. (If $F$ is the empty face let $\Pi_F^w = \widehat{0}$.) We claim that 
\[
F \mapsto \Pi_F^w
\] 
is the desired embedding.

Figure \ref{fig:NCposet} shows the positroid polytope $\Gamma_M$ for the positroid $M$ whose bases are $\{12, 13, 14, 23, 24\}$. It is a square pyramid. It also shows the face poset of $\Gamma_M$, with each face labeled with the corresponding weighted non-crossing partition of $[4]$.

\begin{figure}[ht]
\begin{center}
 \includegraphics[width=12cm]{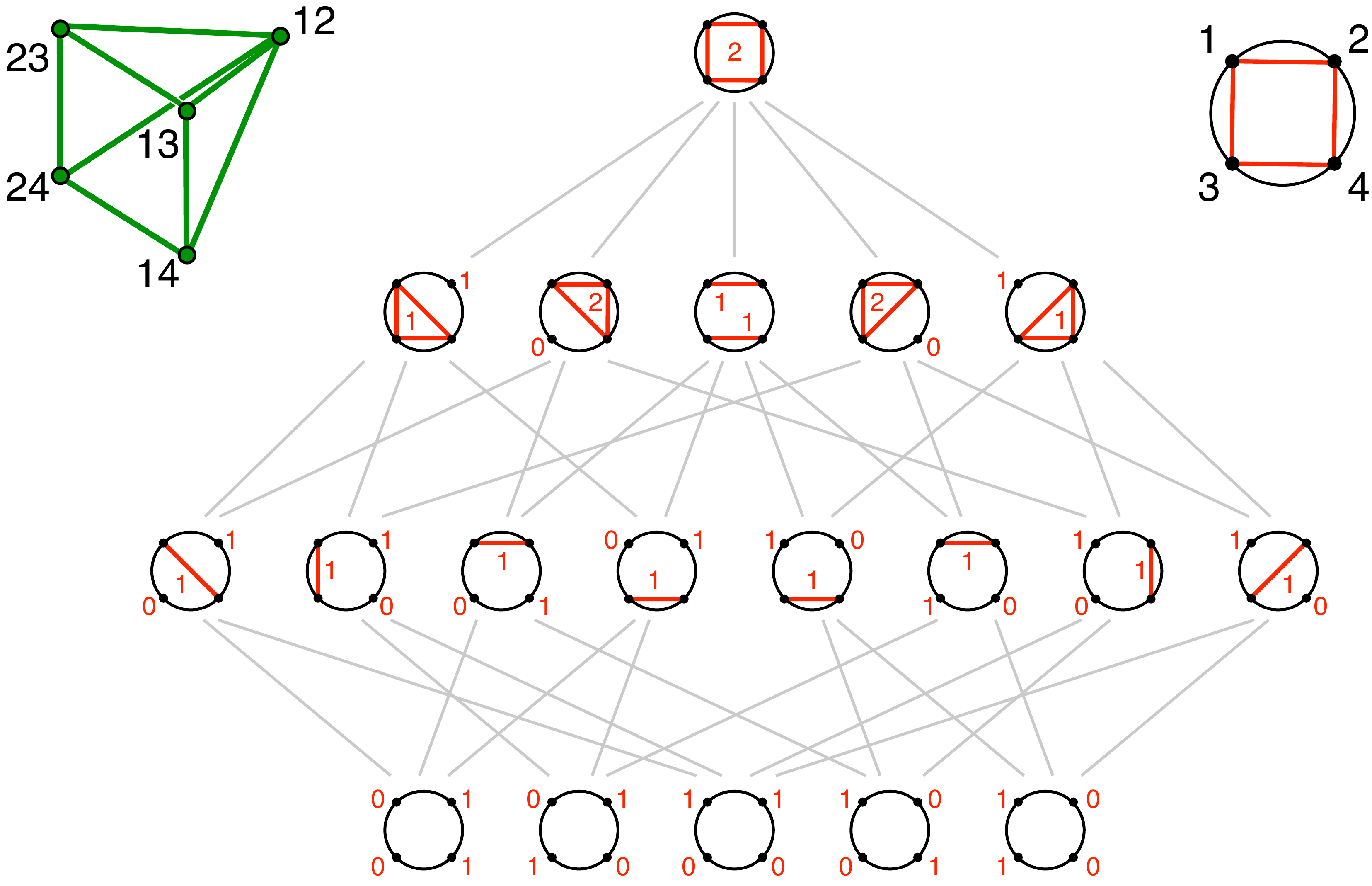}
 \caption{The face poset of the square pyramid inside $NC_4^2$.}
 \label{fig:NCposet}
\end{center}
\end{figure}

First we show that this mapping is one-to-one. Suppose we know $\Pi_F^w$ 
and we wish to recover $F$. Since $F$ is a face of $\Gamma_M$, it satisfies the same inequalities as $\Gamma_M$, 
and some additional equalities. If $F=\Gamma_N$, the equalities that it satisfies are
$\sum_{i \in N_j} x_i = r_N(N_j)$ for $j = 1, \ldots, c$ and their linear combinations. But we know the $N_j$s and the $r_N(N_j)$s from $\Pi_F^w$, so we can recover $F$ as the intersection of $\Gamma_M$ with these $c$ hyperplanes.

Now we show that the mapping is order-preserving. Assume that $F \gtrdot G$ are faces of $\Gamma_M$; say $F=\Gamma_K$ and $G=\Gamma_L$ for positroids $K$ and $L$. Let $K = K_1 \oplus \cdots \oplus K_c$ be the decomposition of $K$ into connected components. Then $\dim F = n-c$ implies $\dim G = \dim F - 1 = n-c-1$. By Proposition \ref{prop:face}, the decomposition of $L$ into connected components must then be of the form $L = K_1 \oplus \cdots \oplus K_{h-1} \oplus (K_h|A) \oplus (K_h/A) \oplus K_{h+1} \oplus \cdots \oplus K_c$ for some $1 \leq h \leq c$ and some proper subset $A \subset K_h$. Therefore $\Pi_G \lessdot \Pi_F$ in $NC_n$. Furthermore, since $K_j$ has the same weight in $\Pi_F$ and $\Pi_G$ for all $j \neq i$ and $r(K)=r(L)=d$, the weight $r_K(K_h)$ in $\Pi_F$ must equal the sum $r_L(K_h|A) + r_L(K_h/ A)$ of weights in $\Pi_G$. Therefore  $\Pi_G^w \lessdot \Pi_F^w$ in $NC_n^d$. 

Finally, to show that the  face poset of $\Gamma_M$ is embedded as an \emph{induced} subposet of $NC_n^d$, assume that $\Pi_G^w \leq \Pi_F^w$ for some faces $F$ and $G$ of $\Gamma_M$. We need to show that $G \leq F$. Again let $F=\Gamma_K$ and $G=\Gamma_L$, and let $K_1, \ldots, K_c$ be the components of $K$. 
The components of $L$ must be a refinement of the components of $K$, say $K^1_1, K^2_1, \dotsc, K^{i_1}_1$, $K^1_2, K^2_2, \dotsc, K^{i_2}_2$, $\ldots$, $K^1_c, K^2_c, \dotsc, K^{i_c}_c$, where $\bigsqcup_{j = 1}^{i_l} K^j_l = K_l$, for $l = 1, \ldots, c$. Moreover, we have $\sum_{j = 1}^{i_l} r_L(K^j_l) = r_K(K_l)$. Now, the equalities that determine the face $F$ as a subset of $\Gamma_M$ are $\sum_{j \in K_l} x_j = r_K(K_l)$ for $l = 1, \ldots, c$. 
The face $G$ is cut out of $\Gamma_M$ by the equalities $\sum_{j \in K^i_l} x_j = r_L(K^i_l)$ for $l = 1, \ldots, c$ and $i = 1, \ldots, i_l$. These latter inequalities easily imply the inequalities that describe $F$ in $\Gamma_M$, so it follows that $G \leq F$.
\end{proof}

\begin{remark}\label{rem:weighted}
In the correspondence above, the weight of a block $N_i$ in a non-crossing partition $\Pi_F^w$ is $w(N_i) = r_N(N_i)$. If we had $r_N(N_i) = |N_i|$, then $N_i$ would consist solely of coloops. Since $N_i$ is connected, we must have $|N_i| \in \{0,1\}$. Singletons may indeed have weight equal to $0$ or $1$. This is the only reason why, in $NC_n^d$, we need to allow a block of size $k$ to have weight $k$, instead of following \cite{DK, GW}. 
\end{remark}

As mentioned earlier, the poset $NC_n^d \cup \{\widehat{0}\}$ is ranked of height $n$. The face poset $F(\Gamma_M)$ of the polytope $\Gamma_M$ of a connected positroid $M$ of rank $d$ on $[n]$ is also ranked of height $n$. 
For each such positroid $M$, the order complex $\Delta(F(\Gamma_M) - \{\widehat{0}, \widehat{1}\})$ can be identified with the barycentric subdivision of the polytope $\Gamma_M$, so it is homeomorphic to an $(n-2)$-sphere. The interaction of these different $(n-2)$-spheres inside the order complex $\Delta(NC_n^d - \{\widehat{1}\})$ is the subject of an upcoming project.

\section{Enumeration of connected positroids}\label{sec:enumeration}

In this section we use Theorem \ref{th:pos-sum}, together with a result of the third author \cite{Williams}, 
to enumerate connected positroids.

\begin{definition}
Let $p(n)$ be the number of positroids on $[n]$ and $p_c(n)$ be the number of connected positroids on $[n]$. Let
\[
P(x) = 1+\sum_{n \geq 1} p(n)x^n \quad \text{ and }\quad  P_c(x) = 1+\sum_{n \geq 1} p_c(n)x^n.
\]
\end{definition}

Many combinatorial objects (such as graphs or matroids) on a set $[n]$ decompose uniquely into connected components $S_1, \ldots, S_k$, where the partition $[n]=S_1 \sqcup \cdots \sqcup S_k$ has no additional structure. In that case, the Exponential Formula
\cite[Theorem 5.1.3]{EC2}
tells us that the exponential generating functions $E_t(x)$ and $E_c(x)$ for the total number of objects and the total number of connected objects are related by the formula $E_c(x) = \log E_t(x)$.

In our situation, where the connected components of a positroid
form a non-crossing partition, we need the following ``non-crossing" analog of 
the Exponential Formula:

\begin{theorem}\label{th:speicher}\cite{Speicher}
Let $K$ be a field. Given a function $f:\ZZ_{>0} \rightarrow K$ define a new function $h:\ZZ_{>0} \rightarrow K$ by
\begin{equation}\label{eq:NC}
h(n) = \sum_{\{S_1, \ldots, S_k\} \in NC_n} f(\#S_1)f(\#S_2) \cdots f(\#S_k),
\end{equation}
where we are summing over all the non-crossing partitions of $[n]$. Define  $F(x) = 1+\sum_{n \geq 1} f(n)x^n$ and $H(x) = 1+\sum_{n \geq 1} h(n)x^n$. Then
\[
xH(x) = \left(\frac{x}{F(x)}\right)^{\langle -1 \rangle},
\]
where $G(x)^{\langle -1 \rangle}$ denotes the compositional inverse of $G(x)$.
\end{theorem}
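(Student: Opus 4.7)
The plan is to prove the functional equation $H(x) = F(xH(x))$ by a direct combinatorial decomposition of non-crossing partitions, and then deduce the desired compositional-inverse identity by a short algebraic manipulation. To set up conventions I will extend $f$ and $h$ to $\ZZ_{\geq 0}$ by $f(0) = h(0) = 1$, so that $F$ and $H$ are genuine power series with constant term $1$.

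First I would decompose a non-crossing partition $\pi \in NC_n$ according to the block $B$ that contains the element $1$. Writing $B = \{1 = i_1 < i_2 < \cdots < i_k\}$, the non-crossing condition forces every other block of $\pi$ to be contained in exactly one of the $k$ gaps
$$(i_1, i_2),\ (i_2, i_3),\ \ldots,\ (i_{k-1}, i_k),\ (i_k, n],$$
and the blocks lying in any one gap form, by themselves, a non-crossing partition of that gap (this is the point where one checks that the decomposition really is a bijection). Setting $a_j := i_{j+1} - i_j - 1$ for $1 \leq j < k$ and $a_k := n - i_k$, the non-crossing partition $\pi$ is therefore uniquely encoded by the integer $k \geq 1$, the composition $(a_1, \ldots, a_k)$ of $n-k$ into nonnegative parts, and a choice of non-crossing partition of each of the $k$ gaps.

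Next I would translate this bijection into generating functions. The block $B$ of size $k$ contributes a factor $f(k)$ and each gap of size $a_j$ contributes a factor $h(a_j)$, so the defining identity \eqref{eq:NC} refines to
$$h(n) \ = \ \sum_{k \geq 1} f(k) \sum_{\substack{a_1, \ldots, a_k \geq 0 \\ a_1 + \cdots + a_k = n-k}} \prod_{j=1}^{k} h(a_j) \qquad (n \geq 1).$$
The inner sum is exactly $[x^{n-k}]\, H(x)^k$, so the right-hand side equals $[x^n] \sum_{k \geq 1} f(k)\, (xH(x))^k = [x^n]\, F(xH(x))$. Since $H(0) = F(0) = 1$ the constant terms also agree, yielding the functional equation $H(x) = F(xH(x))$.

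Finally, multiplying by $x$ gives $xH(x) = x \cdot F(xH(x))$. Setting $\phi(x) := xH(x)$, we have $\phi(0) = 0$ and $\phi'(0) = H(0) = 1$, so $\phi$ has a compositional inverse. The equation rewrites as $\phi(x)/F(\phi(x)) = x$, which is precisely the statement that $\phi = xH(x)$ is the compositional inverse of $x/F(x)$, as claimed. The main (really, the only) obstacle is the first step: one must carefully verify that fixing the block containing $1$ decomposes an arbitrary non-crossing partition of $[n]$ into independent non-crossing partitions of each gap, and conversely that any such data assembles back into a non-crossing partition; once this is in hand the rest is a bookkeeping exercise in formal power series.
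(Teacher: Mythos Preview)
Your argument is correct and is the standard proof of this identity: decompose a non-crossing partition by the block containing $1$, observe that the remaining blocks form independent non-crossing partitions of the gaps, and read off the functional equation $H(x)=F(xH(x))$, from which the compositional-inverse statement follows immediately.

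Note, however, that the paper does not actually prove Theorem~\ref{th:speicher}: it is quoted as a known result from Speicher~\cite{Speicher} and used as a black box (see the proof of Corollary~\ref{cor:pos}). So there is no ``paper's own proof'' to compare against; you have supplied exactly the argument that Speicher's reference contains.
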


\begin{corollary} \label{cor:pos} The generating functions for positroids and connected positroids satisfy:
\[
xP(x) = \left( \frac{x}{P_c(x)} \right)^{\langle -1 \rangle}.
\]
\end{corollary}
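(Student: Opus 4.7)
My plan is to derive the corollary as a direct consequence of the main structural result (Theorem \ref{th:pos-sum}) combined with Speicher's non-crossing analog of the exponential formula (Theorem \ref{th:speicher}). The whole proof is essentially a bookkeeping exercise once one sees that these two results fit together perfectly.

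The starting point is the count of positroids on $[n]$ via their non-crossing decomposition. By Theorem \ref{th:pos-sum}, a positroid on $[n]$ is the same data as: (i) a non-crossing partition $\{S_1,\dots,S_k\}$ of $[n]$, together with (ii) a connected positroid on each block $S_i$ (with respect to the total order on $S_i$ inherited from $[n]$). The first step I would check carefully is that the number of connected positroids on a block $S_i$ of size $|S_i|$ equals $p_c(|S_i|)$. This follows because the notion of positroid depends only on the total order of the ground set (not on the actual labels), so the order-preserving bijection $S_i \to [|S_i|]$ induces a bijection between connected positroids on $S_i$ and connected positroids on $[|S_i|]$. Putting the two halves of Theorem \ref{th:pos-sum} together yields the key enumerative identity
\[
p(n) \;=\; \sum_{\{S_1,\ldots,S_k\} \in NC_n} p_c(|S_1|)\,p_c(|S_2|)\cdots p_c(|S_k|).
\]

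Next I would apply Theorem \ref{th:speicher} with $f = p_c$ (so $F(x) = P_c(x)$). The identity above is precisely the defining relation \eqref{eq:NC} for the function $h$, so we obtain $h(n) = p(n)$ for all $n \geq 1$, and hence $H(x) = P(x)$. Theorem \ref{th:speicher} then gives
\[
xP(x) \;=\; \left(\frac{x}{P_c(x)}\right)^{\langle -1 \rangle},
\]
which is exactly the claim of the corollary.

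There is no real obstacle here; the only subtle point is the order-preserving-relabeling argument justifying that the count on a block of size $m$ is $p_c(m)$ (since, as the paper emphasizes, being a positroid is not invariant under arbitrary matroid isomorphism, but it \emph{is} invariant under relabelings that preserve the total order). Once that is noted, the corollary is an immediate specialization of Speicher's formula.
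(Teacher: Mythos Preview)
Your proposal is correct and follows essentially the same approach as the paper: derive the non-crossing enumerative identity $p(n) = \sum_{\{S_1,\ldots,S_k\} \in NC_n} p_c(|S_1|)\cdots p_c(|S_k|)$ from Theorem~\ref{th:pos-sum} and then plug into Theorem~\ref{th:speicher}. You even make explicit the order-preserving relabeling step that the paper leaves implicit.
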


\begin{proof} 
Theorem \ref{th:pos-sum} implies that
\[
p(n) = \sum_{\{S_1, \ldots, S_k\} \in NC_n} p_c(\#S_1) p_c(\#S_2) \cdots p_c(\#S_k),
\]
and Theorem \ref{th:speicher} then gives the desired result.
\end{proof}

Enumeration of general positroids has been previously studied by the third author in \cite{Williams}.

\begin{theorem}\label{th:count}
We have
\[
P(x) =  \sum_{k \geq 0} k! \frac{x^k}{(1-x)^{k+1}}, \qquad
p(n) = \sum_{k=0}^n \frac{n!}{k!}, \qquad 
\lim_{n \rightarrow \infty} \frac{p(n)}{n!} = e.
\]
\end{theorem}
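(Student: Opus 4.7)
The plan is to leverage the bijection between positroids on $[n]$ and decorated permutations on $[n]$ recorded in Section~\ref{sec:objects}, and then apply the exponential formula for labelled structures. So I would first reinterpret $p(n)$ as the number of decorated permutations on $[n]$.

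Next, I would compute the exponential generating function. A decorated permutation decomposes uniquely into cycles, and the decoration only affects fixed points (cycles of length one), which come in two flavors (clockwise vs.\ counterclockwise). Hence the EGF for a single ``decorated cycle'' is
\[
C(x) = 2x + \sum_{k \geq 2} \frac{x^k}{k} = x + \log \frac{1}{1-x},
\]
and the exponential formula gives
\[
\sum_{n \geq 0} p(n)\,\frac{x^n}{n!} = \exp(C(x)) = \frac{e^x}{1-x}.
\]
Extracting the coefficient of $x^n$ from the product $e^x \cdot \frac{1}{1-x}$ yields $p(n)/n! = \sum_{k=0}^n 1/k!$, which is the second identity; and since this is the $n$th partial sum of the Taylor series of $e^x$ at $x=1$, the asymptotic $p(n)/n! \to e$ is immediate.

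For the first identity (the ordinary generating function), I would expand $(1-x)^{-(k+1)} = \sum_{m \geq 0} \binom{m+k}{k} x^m$ and collect the coefficient of $x^n$ to obtain
\[
\sum_{k \geq 0} \frac{k!\, x^k}{(1-x)^{k+1}} \,=\, \sum_{n \geq 0} x^n \sum_{k=0}^n k!\binom{n}{k} \,=\, \sum_{n \geq 0} x^n \sum_{k=0}^n \frac{n!}{(n-k)!} \,=\, \sum_{n \geq 0} p(n)\,x^n \,=\, P(x),
\]
as claimed.

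I do not anticipate any real obstacle: once the bijection with decorated permutations is invoked, the problem reduces to a clean exponential formula computation. The only subtlety is correctly handling the factor of $2$ on the fixed point term when assembling the cycle generating function $C(x)$, which reflects the two possible decorations of each singleton cycle.
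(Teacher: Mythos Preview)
Your argument is correct, but it proceeds differently from the paper. The paper does not give a self-contained derivation here: it obtains the ordinary generating function identity by citing a finer $q,y$-enumeration of positroids due to Williams and specializing $q=y=1$; the formula for $p(n)$ and the limit are then read off from that. You instead start from the bijection with decorated permutations and use the exponential formula for labelled structures to compute the exponential generating function directly, obtaining $e^x/(1-x)$ (this is stated separately in the paper as Proposition~\ref{prop:expcount}, with a citation to Postnikov). From there you extract $p(n)=\sum_{k=0}^n n!/k!$ and the limit, and finally verify the ordinary generating function by a direct coefficient computation. Your route is more elementary and fully self-contained, at the cost of not connecting to the refined enumeration by rank and cell dimension that the cited result provides; the paper's route imports a stronger statement but treats the theorem as essentially a corollary.
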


\begin{proof}
In \cite{Williams}, Williams gave a finer enumeration of positroids in terms of the size of the ground set, the rank, and the dimension of the positroid cell. 
The first equality follows from \cite[Prop. 5.11]{Williams} by setting $q=y=1$. This easily implies the second equality, which implies the third.
\end{proof}

The following formula also follows easily from the above.
\begin{proposition}\cite[Prop. 23.2]{postnikov}\label{prop:expcount}
The exponential generating function for $p(n)$ is
\[
1+\sum_{n \geq 1} p(n) \frac{x^n}{n!} = \frac{e^x}{1-x}.
\]
\end{proposition}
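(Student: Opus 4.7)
The plan is to derive this as an immediate corollary of the formula $p(n) = \sum_{k=0}^{n} \frac{n!}{k!}$ established in Theorem \ref{th:count}, by simply dividing by $n!$ and swapping the order of summation. Since $p(0) = 1$ (the empty positroid), the left-hand side is $\sum_{n \geq 0} p(n) \frac{x^n}{n!}$.

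I would substitute and interchange sums:
\[
\sum_{n \geq 0} p(n) \frac{x^n}{n!} \;=\; \sum_{n \geq 0} \sum_{k=0}^{n} \frac{1}{k!}\, x^n \;=\; \sum_{k \geq 0} \frac{1}{k!} \sum_{n \geq k} x^n \;=\; \sum_{k \geq 0} \frac{x^k}{k!} \cdot \frac{1}{1-x} \;=\; \frac{e^x}{1-x},
\]
where the interchange is valid as a formal power series identity (or absolutely for $|x|<1$). This yields exactly the claimed identity.

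There is no real obstacle here: all the work is already done in Theorem \ref{th:count}, which extracts the explicit coefficient formula $p(n) = \sum_{k=0}^n n!/k!$ from Williams's enumeration. Alternatively, if one wished to avoid even invoking the closed-form coefficients, one could start from the ordinary generating function $P(x) = \sum_{k \geq 0} k!\, x^k/(1-x)^{k+1}$ in Theorem \ref{th:count} and use the Borel-type transform $\sum a_n x^n \mapsto \sum a_n x^n/n!$ term by term, noting that the ordinary generating function of $\binom{n}{k}$ is $x^k/(1-x)^{k+1}$ while its exponential generating function is $x^k e^x/k!$; summing over $k$ again gives $e^x/(1-x)$. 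Either route is a short formal computation.
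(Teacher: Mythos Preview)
Your proof is correct and is exactly what the paper intends: it simply remarks that the formula ``follows easily from the above,'' meaning from the closed form $p(n)=\sum_{k=0}^n n!/k!$ in Theorem~\ref{th:count}, and your substitution-and-swap carries this out verbatim.
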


The sequence $\{p(n)\}_{n \geq 1}$ is entry A000522 in Sloane's Encyclopedia of Integer Sequences \cite{Sloane}. The first few terms are $2, 5, 16, 65, 326, 1957$, $13700, \ldots$ .

\begin{theorem}\label{th:countconn}
The number $p_c(n)$ of connected positroids on $[n]$ satisfies
\begin{eqnarray*}
p_c(n) &=& \frac{[x^n] P(x)^{1-n}}{1-n}, \\
p_c(n) &=& (n-1)p_c(n-1) + \sum_{j=2}^{n-2} (j-1)p_c(j)p_c(n-j) \,\, \text{for }n \geq 2, \text{ and} \\
\lim_{n \rightarrow \infty} \frac{p_c(n)}{n!} &=& \frac1e.
\end{eqnarray*}
\end{theorem}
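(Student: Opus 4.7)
The central ingredient is the functional equation from Corollary~\ref{cor:pos}: writing $y = xP(x)$, we have $y = x P_c(y)$, equivalently $P_c(xP(x)) = P(x)$. All three claims flow from this identity, combined with the explicit form of $P(x)$ given by Theorem~\ref{th:count}.

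For the first formula, we apply Lagrange--B\"urmann inversion. The relation $y = x\,\phi(x)$ with $\phi = P$ has compositional inverse $x(y) = y/P_c(y)$. Lagrange's identity, for any formal power series $H$, states
\[
[y^n]\,H(x(y)) \;=\; \frac{1}{n}\,[x^{n-1}]\,H'(x)\,P(x)^{-n}.
\]
Choosing $H = P$, we have $H(x(y)) = P(x(y)) = P_c(y)$ by the functional equation, so $p_c(n) = \tfrac{1}{n}[x^{n-1}]\,P'(x)\,P(x)^{-n}$. The identity $P'(x)\,P(x)^{-n} = \tfrac{1}{1-n}\bigl(P(x)^{1-n}\bigr)'$ together with $[x^{n-1}]\,F'(x) = n\,[x^n]\,F(x)$ then yields the first formula.

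For the recurrence we invoke Corollary~\ref{cor:connSIF}: for $n \geq 2$, $p_c(n)$ equals the number of SIF permutations of $[n]$, and a recurrence of precisely this form is established for SIF permutations by Callan~\cite{Callan} via a direct combinatorial decomposition. Alternatively, one can derive it algebraically by differentiating $y = x P_c(y)$ with respect to $x$ to get $y'(1 - x P_c'(y)) = P_c(y)$, clearing denominators, and comparing coefficients of $x^n$ against the expansion $y = \sum p(n-1) x^n$; this reduces to an identity in the $p_c$'s that unravels to the stated recurrence.

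The asymptotic is the most delicate claim, since the recurrence alone is stable but does not uniquely determine the limit. Our approach is probabilistic. By Theorem~\ref{th:pos-sum}, for a uniformly random positroid on $[n]$, let $X_n$ denote the size of the block of the associated non-crossing partition containing element~$1$; then
\[
\Pr(X_n = k) \;=\; \frac{p_c(k)\,[x^{n-k}]\,P(x)^k}{p(n)}.
\]
An elementary estimate gives $[x^l]\,P(x)^m = (2m)^l/l! + O(m^{l-1})$ for fixed $l$ and $m \to \infty$, because $P(x) = 1 + 2x + O(x^2)$. Combined with $p(n) \sim e\,n!$ from Theorem~\ref{th:count}, this yields
\[
\Pr(X_n = n - l) \;\sim\; \frac{p_c(n-l)}{(n-l)!}\cdot \frac{2^l}{e\,l!}.
\]
The sequence $a_n := p_c(n)/n!$ is bounded by $p(n)/n! \to e$; along any subsequence on which $a_n \to c$, a diagonal extraction (together with the recurrence of the second part, which controls ratios of nearby terms) forces $a_{n-l} \to c$ for each fixed $l$. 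A tightness argument then shows that $n - X_n$ converges in distribution to $\mathrm{Poisson}(2)$ along this subsequence; normalizing $\sum_{l \geq 0} c\cdot 2^l/(l!\,e) = ce = 1$ forces $c = 1/e$. Since every subsequential limit equals $1/e$, we conclude $p_c(n)/n! \to 1/e$. The main obstacle is the tightness step, which requires uniform control of $\Pr(X_n = k)$ for $k$ in the middle range of $[n]$; this can be achieved by combining the crude bound $p_c(k) \leq k!\,e$ with sharper estimates on $[x^{n-k}]\,P(x)^k$ in that regime.
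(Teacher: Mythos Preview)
Your treatment of the first two assertions matches the paper's own proof: both obtain the coefficient formula by Lagrange inversion applied to the functional relation of Corollary~\ref{cor:pos} (the paper uses the form $m[x^m](F^{\langle -1\rangle})^k = k[x^{-k}]F^{-m}$ with $F(x)=x/P_c(x)$, $m=1$, $k=1-n$, equivalent to your Lagrange--B\"urmann computation), and both obtain the recurrence by invoking Corollary~\ref{cor:connSIF} and citing Callan~\cite{Callan}.

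The divergence is in the asymptotic. The paper does not argue directly; it simply cites \cite[Cor.~11]{ST}, which already gives $p_c(n)/n!\to 1/e$ for SIF permutations. Your probabilistic route via the block-size variable $X_n$ is a genuinely different idea, and the formula $\Pr(X_n=k) = p_c(k)\,[x^{n-k}]P(x)^k/p(n)$ is correct (decompose a non-crossing partition along the block containing~$1$, so that the complement falls into $k$ independent cyclic intervals). The propagation step can also be justified: dividing the recurrence by $n!$ and bounding the convolution term using $p_c(j)\le e\,j!$ gives $a_n-a_{n-1}=O(1/n)$, so convergence along a subsequence does transfer to shifted indices.

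The tightness step, however, is a genuine gap. You flag it as ``the main obstacle'' and then assert it ``can be achieved'' with sharper estimates on $[x^{n-k}]P(x)^k$, but you do not supply any. This is not routine: since $p(n)\sim e\cdot n!$, the series $P(x)$ has radius of convergence zero, so standard singularity or saddle-point bounds are unavailable, and uniform control of $[x^l]P(x)^m$ over the middle range of $l$ requires a real argument. Without tightness, your subsequential-limit scheme cannot close (in particular, it cannot rule out mass escaping to $c=0$), so as written the proof of the third claim is incomplete. The paper avoids this difficulty entirely by appealing to the literature.
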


\begin{proof}
The first statement follows by applying the Lagrange inversion formula \cite[Theorem 5.4.2]{EC2} to $F(x)=x/P_c(x)$ and $F^{\langle -1 \rangle}(x) = xP(x)$, which says:
\begin{eqnarray*}
m[x^m]\left(F^{\langle -1 \rangle}(x)\right)^k &=& k[x^{-k}] F(x)^{-m} \\
m[x^m]\left(xP(x)\right)^k &=& k[x^{-k}] \left(\frac{P_c(x)}x\right)^m \\
m[x^{m-k}] P(x)^k &=& k[x^{m-k}] P_c(x)^m.
\end{eqnarray*}
It remains to set $m=1$ and $k=1-n$. 

In view of Corollary \ref{cor:connSIF}, the second statement is derived in \cite{Callan}, and the third is a consequence of \cite[Cor. 11]{ST}. 
\end{proof}

The sequence $\{p_c(n)\}_{n \geq 1}$ is, except for the first term, equal to entry A075834 in Sloane's Encyclopedia of Integer Sequences \cite{Sloane}. The first few terms are $2, 1, 2, 7, 34, 206, 1476, \ldots$ .

We conclude the following.

\begin{theorem}\label{th:ratio}
If $p(n)$ is the number of positroids on $[n]$ and $p_c(n)$ is the number of connected positroids on $[n]$, then
\[
\lim_{n \rightarrow \infty} \frac{p_c(n)}{p(n)} = \frac1{e^2} \approx 0.1353.
\]
\end{theorem}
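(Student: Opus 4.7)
The plan is to deduce this statement immediately from the two asymptotic results already established in Theorems \ref{th:count} and \ref{th:countconn}. Specifically, I would write
\[
\frac{p_c(n)}{p(n)} \;=\; \frac{p_c(n)/n!}{p(n)/n!},
\]
and then take the limit of numerator and denominator separately. Theorem \ref{th:countconn} supplies $\lim_{n\to\infty} p_c(n)/n! = 1/e$, while Theorem \ref{th:count} supplies $\lim_{n\to\infty} p(n)/n! = e$. Since the denominator has a nonzero limit, the quotient converges to $(1/e)/e = 1/e^2$.

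There is essentially no obstacle at this stage, since the hard work has been done upstream: the generating function identity $xP(x) = (x/P_c(x))^{\langle -1\rangle}$ from Corollary \ref{cor:pos} (which uses the main structural Theorem \ref{th:pos-sum} together with the non-crossing Exponential Formula of Speicher), the closed form $p(n) = \sum_{k=0}^n n!/k!$ from \cite{Williams}, and the SIF-permutation asymptotic from \cite{ST} together carry all the weight. The only thing worth remarking is that the ``surprise'' announced in the introduction — that most positroids are \emph{not} connected, in sharp contrast to the conjectural behavior of general matroids — is quantified precisely by the fact that $1/e^2 \approx 0.1353$ is strictly between $0$ and $1$, so that a positive fraction of positroids are connected and a positive fraction are not.
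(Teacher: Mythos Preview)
Your proposal is correct and matches the paper's own proof exactly: the paper simply states that the result is an immediate consequence of Theorems \ref{th:count} and \ref{th:countconn}, which is precisely the quotient-of-limits argument you give.
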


\begin{proof}
This is an immediate consequence of Theorems \ref{th:count} and \ref{th:countconn}.
\end{proof}

This result is somewhat surprising in view of the conjecture that most matroids are connected:

\begin{conjecture}\label{conj:conn}(Mayhew, Newman, Welsh, Whittle, \cite{MNWW})
If $m(n)$ is the number of matroids on $[n]$ and $m_c(n)$ is the number of connected matroids on $[n]$, then
\[
\lim_{n \rightarrow \infty} \frac{m_c(n)}{m(n)} = 1.
\]
\end{conjecture}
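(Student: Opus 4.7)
The plan is to bound the number of disconnected matroids on $[n]$ from above via a direct-sum decomposition argument, and then show this bound is $o(m(n))$ using the known asymptotics for $m(n)$. Since Conjecture \ref{conj:conn} is open, what I describe is a proposed line of attack, flagging where the genuine difficulty lies.

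Every disconnected matroid $M$ on $[n]$ admits at least one decomposition $M = M|S_1 \oplus M|S_2$ into restrictions on complementary nonempty subsets of $[n]$. Choosing any such split gives the (over-)counting upper bound
\[
m(n) - m_c(n) \;\leq\; \sum_{k=1}^{n-1} \binom{n}{k}\, m(k)\, m(n-k),
\]
so it suffices to show that the right-hand side is $o(m(n))$. I would then invoke the Knuth--Piff asymptotics $\log_2 \log_2 m(n) = n - \log_2 n + O(\log \log n)$, so that $m(n)$ grows roughly like $2^{2^n/n}$ at the doubly-exponential scale. Under this growth, $\binom{n}{k} m(k) m(n-k)$ is maximized at the endpoints $k=1$ and $k=n-1$, where it equals $2n \cdot m(n-1)$; the interior terms (e.g., the central $\binom{n}{n/2} m(n/2)^2$) are negligible because $2 \cdot 2^{n/2}/(n/2)$ is dwarfed by $2^n/n$ inside the outer exponent. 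The conjecture thus reduces to establishing the cleaner statement
\[
\lim_{n \to \infty} \frac{n \cdot m(n-1)}{m(n)} \;=\; 0,
\]
which is morally forced by doubly-exponential growth.

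The principal obstacle is that the existing bounds on $m(n)$, although asymptotically correct at the $\log_2 \log_2$ scale, are \emph{not sharp enough at the second-order term} to force $m(n-1)/m(n) \to 0$ without additional work: one must ensure that the $O(\log \log n)$ error does not absorb the $1/n$-type gain in passing from $n-1$ to $n$. The natural route is to quantitatively refine Knuth's lower-bound construction, showing that from each matroid on $[n-1]$ one can produce super-polynomially many distinct matroids on $[n]$ (for instance by varying which flats a new element is placed on). In parallel, I would attack the problem through sparse paving matroids, which are automatically connected for ranks $2 \leq r \leq n-2$: any quantitative progress on the Sparse Paving Conjecture (asserting almost all matroids are sparse paving) would immediately yield Conjecture \ref{conj:conn} as a corollary. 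It is worth emphasizing the striking contrast with the positroid world: Theorem \ref{th:ratio} shows that positroids, being governed by non-crossing partitions, retain a positive fraction of disconnected examples in the limit, whereas the conjecture predicts the opposite behavior for matroids --- a reflection of the fact that $m(n)$ grows far faster than $p(n)$, so that the ``penalty'' for being disconnected is asymptotically catastrophic in the matroid setting but only mildly expensive for positroids.
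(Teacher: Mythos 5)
This statement is not a result of the paper at all: it is quoted as a conjecture of Mayhew, Newman, Welsh and Whittle \cite{MNWW}, and the authors give no proof --- it appears only as a foil for Theorem \ref{th:ratio}, to emphasize that the positroid behaviour (limit $1/e^2$) is the opposite of what is expected for arbitrary matroids. So there is no proof in the paper to compare yours against, and none should be expected; the question is only whether your proposed attack is sound and where it stops being a proof.

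Your proposal, as you yourself flag, has a genuine gap, and it is exactly the one you name. The convolution bound $m(n)-m_c(n)\le\sum_{k=1}^{n-1}\binom{n}{k}\,m(k)\,m(n-k)$ is fine, and reducing to control of the endpoint terms (which equal $2n\,m(n-1)$, since $m(1)=2$) is the natural first move; note, though, that every bounded $k$ poses the same difficulty as $k=1$ (the term $\binom{n}{k}m(k)m(n-k)$ needs $n^k m(n-k)=o(m(n))$), so you really need a ratio bound such as $m(n-1)/m(n)\le\varepsilon_n$ with $n\varepsilon_n\to 0$ holding uniformly for all large $n$, not just the single limit statement. The crucial step $n\,m(n-1)/m(n)\to 0$ is precisely the open content: it is not ``morally forced'' by the known asymptotics. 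Concretely, Knuth's lower bound \cite{Knuth} gives $\log_2 m(n)\ge\frac1n\binom{n}{\lfloor n/2\rfloor}$, while the best known upper bounds \cite{BPV} are of the shape $\log_2 m(n)\le\frac{2+o(1)}{n}\binom{n}{\lfloor n/2\rfloor}$; since $\binom{n-1}{\lfloor(n-1)/2\rfloor}\approx\tfrac12\binom{n}{\lfloor n/2\rfloor}$, the resulting upper bound for $\log_2 m(n-1)$ is about $\frac{1+o(1)}{n}\binom{n}{\lfloor n/2\rfloor}$, which is \emph{not} below the lower bound for $\log_2 m(n)$. Hence the known bounds do not even yield $m(n-1)=o(m(n))$, let alone with a factor of $n$ to spare, and the constant-factor slack in the exponent (not merely the $O(\log\log n)$ error at the $\log_2\log_2$ scale) is what blocks the argument. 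Your sparse-paving remark correctly points at the direction pursued in the subsequent literature (Pendavingh and van der Pol), but those arguments rest on substantially finer counting than the crude decomposition bound above. In short: the proposal isolates the difficulty correctly, but it does not prove the statement --- which, within this paper, remains a cited conjecture.
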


Theorem \ref{th:ratio} should \textbf{not} be seen as evidence against Conjecture \ref{conj:conn}. 
Positroids possess strong structural properties that are quite specific to them.
Furthermore, they  are a relatively small family of matroids: compare the 
estimate 
$\log_2 \log_2 m(n) \sim n$ 
due to Knuth \cite{Knuth} and Bansal, Pendavingh, and van der Pol \cite{BPV} with the estimate $p(n) \sim n! \, e $, which gives $\log_2 \log_2 p(n) \sim \log_2 n$.

\section{Positroids and free probability}\label{sec:free}

The results of the previous section have an interesting connection with Voiculescu's theory of \emph{free probability}. We give a very brief overview of the aspects of the theory that are relevant to our discussion; for a more thorough introduction, we recommend Speicher's excellent survey \cite{Speicher}.

The concept of \emph{freeness} can be thought of as a ``non-commutative analogue" to the classical notion of independence in probability.  The role played by independence, moments, cumulants, and partitions in classical probability is now played by freeness, moments, free cumulants, and non-crossing partitions in free probability, as we now explain.

Given a real-valued random variable $X$ with probability distribution $\mu(x)$, the \emph{moments} of $X$ are the expected values of the powers of $X$: the $n$th moment is $m_n(X) = E(X^n)$,
for $n \geq 1$. (We assume for the rest of this discussion that all moments exist.) The \emph{moment generating function} 
\[
M_X(t) = E(e^{tX}) = \sum_{n \geq 0} m_n(X) \frac{t^n}{n!}
\]
is essentially the same as the Fourier transform of $\mu$. The \emph{cumulants} of $X$ are the coefficients of the generating function
\[
\log M_X(t) =  \sum_{n \geq 0} c_n(X) \frac{t^n}{n!}.
\]
The independence of random variables $X$ and $Y$ translates into a linear relation of cumulants. Since expectation is multiplicative on independent variables, we have that $M_{X+Y}(t) = M_X(t)M_Y(t)$ when $X$ and $Y$ are independent, so
\[
X, Y \textrm{ independent} \Rightarrow c_n(X+Y) = c_n(X) + c_n(Y) \textrm{ for all } n \geq 1.
\]

In the non-commutative setting, our ``random variables" are simply elements of a unital algebra $\A$ which is not necessarily commutative. Our ``expectation" $E$ is just a linear function $E: \A \rightarrow \C$ with $E(1)=1$. Moments are defined in analogy with the classical case. We say that random variables $X$ and $Y$ are \emph{free} if, 
for any polynomials $p_1, q_1, \ldots, p_k, q_k$,
\[
E(p_i(X)) = E(q_j(Y)) = 0 \textrm{ for all } i,j \, \Rightarrow \, 
E(p_1(X)q_1(Y) \cdots p_k(X)q_k(Y)) = 0.
\]

Again, the freeness of $X$ and $Y$ manifests linearly in terms of the 
\emph{free cumulants}, which are the numbers $k_1, k_2, \ldots$ such that
\begin{equation}
m_n = \sum_{\{S_1, \ldots, S_k\} \in NC_n} k_{\#S_1}k_{\#S_2} \cdots k_{\#S_k} \label{eq:m,k}
\end{equation}
for all $n$. 
While the formula for the moments of $X+Y$ is quite intricate, the free cumulants are related beautifully by: 
\[
X, Y \textrm{ free } \Rightarrow k_n(X+Y) = k_n(X) + k_n(Y) \textrm{ for all } n \geq 1.
\]
There is also a remarkable formula for the free cumulants of $X \cdot Y$ \cite{Speicher}.

With all the necessary background in place, we can now establish a simple connection between free probability and  positroids. Let $\textrm{Exp}(\lambda)$ be an exponential random variable with rate parameter $\lambda$.

\begin{theorem}
The moments of the random variable $Y \sim 1+ \textrm{Exp}(1)$ are
\[
m_n(Y) = \# \textrm{ positroids on } [n],
\]
and its free cumulants are
\[
k_n(Y) = \# \textrm{ connected positroids on } [n].
\]
\end{theorem}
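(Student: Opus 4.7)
The strategy is to verify the first identity by a direct moment computation, and then deduce the second identity from the uniqueness of the moment-cumulant relation together with the recursive structure of positroids established in Theorem \ref{th:pos-sum}.

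First I would compute the moments of $Y \sim 1 + \mathrm{Exp}(1)$. Since $X \sim \mathrm{Exp}(1)$ has $E(X^k) = k!$, the binomial theorem gives
\[
m_n(Y) = E\bigl((1+X)^n\bigr) = \sum_{k=0}^{n} \binom{n}{k} k! = \sum_{k=0}^{n} \frac{n!}{(n-k)!} = \sum_{j=0}^{n} \frac{n!}{j!},
\]
where in the last step we reindex $j = n-k$. By Theorem \ref{th:count}, this last sum is exactly $p(n)$, so $m_n(Y) = p(n)$, proving the first claim.

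Next I would exploit the moment-cumulant formula (\ref{eq:m,k}): the free cumulants $k_n(Y)$ are the unique sequence satisfying
\[
m_n(Y) = \sum_{\{S_1, \ldots, S_\ell\} \in NC_n} k_{\#S_1}(Y)\, k_{\#S_2}(Y) \cdots k_{\#S_\ell}(Y)
\]
for every $n \geq 1$. (Uniqueness is immediate by induction on $n$: the singleton-block partition contributes the term $k_n(Y)$ linearly, and all other summands involve only $k_1(Y), \ldots, k_{n-1}(Y)$.) On the other hand, the proof of Corollary \ref{cor:pos}, which rests on Theorem \ref{th:pos-sum}, gives precisely the parallel identity
\[
p(n) = \sum_{\{S_1, \ldots, S_\ell\} \in NC_n} p_c(\#S_1)\, p_c(\#S_2) \cdots p_c(\#S_\ell).
\]

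Finally, since $m_n(Y) = p(n)$ for all $n$, the two displayed identities have identical left-hand sides and identical structure on the right-hand side. By the uniqueness of the sequence satisfying the moment-cumulant recursion, induction on $n$ yields $k_n(Y) = p_c(n)$ for every $n \geq 1$. The main (minor) point to be careful about is the base case and the normalization $E(1) = 1$, which matches $k_1(Y) = m_1(Y) = 2 = p_c(1)$; after that, the induction is automatic and there is no real obstacle. The whole argument is essentially a formal translation between Speicher's non-crossing exponential formula (Theorem \ref{th:speicher}) applied to probability on one hand and to positroid enumeration on the other.
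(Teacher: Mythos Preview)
Your proof is correct and follows essentially the same approach as the paper. The only cosmetic difference is that you compute $m_n(Y)$ directly via the binomial expansion and compare with the closed form in Theorem \ref{th:count}, whereas the paper identifies the moment generating function $M_Y(t)=e^t/(1-t)$ and compares with Proposition \ref{prop:expcount}; for the free cumulants, both arguments invoke the same moment--cumulant relation \eqref{eq:m,k} together with Corollary \ref{cor:pos}.
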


\begin{proof}
Using the fact that 
$M_{A+B}(t)=M_A(t)M_B(t)$ for independent random variables $A$ and $B$, and that $M_{\textrm{Exp}(\lambda)} = 1/(1-\frac{t}{\lambda})$, it follows that 
the moment generating function of $Y$ is
\[
M_Y(t) = M_{1+\textrm{Exp}(1)}(t) = M_1(t)M_{\textrm{Exp}(1)}(t) = e^t \cdot \frac{1}{1-t}.
\]

Comparing with Proposition \ref{prop:expcount} gives the first formula. The second follows by combining Corollary \ref{cor:pos} with the relation 
\eqref{eq:m,k} between moments and free cumulants.
\end{proof}

\bibliographystyle{amsalpha}
\bibliography{bibliography}
\label{sec:biblio}

\end{document}